\newtheorem{theorem}{Theorem}
\newtheorem{lemma}[theorem]{Lemma}
\newenvironment{proof}{\noindent{\scshape Proof.}}{\hspace*{2mm} $\square$}
\newcommand{\Z}{\mathbb{Z}}
\newcommand{\R}{\mathbb{R}}
\newcommand{\ind}{\mathbf{1}}
\newcommand{\ep}{\epsilon}
\DeclareMathOperator{\card}{card \,}
\DeclareMathOperator{\poisson}{Poisson \,}
\DeclareMathOperator{\binomial}{Binomial \,}
\DeclareMathOperator{\exponential}{Exponential \,}
\DeclareMathOperator{\gammadist}{Gamma \,}
\begin{document}

\begin{frontmatter}

\title     {Some rigorous results for the \\ stacked contact process}
\runtitle  {Some rigorous results for the stacked contact process}
\author    {Nicolas Lanchier and Yuan Zhang}
\runauthor {Nicolas Lanchier and Yuan Zhang}
\address   {School of Mathematical and Statistical Sciences, \\ Arizona State University, \\ Tempe, AZ 85287, USA. \\ E-mail: nlanchie@asu.edu}
\address   {Department of Mathematics, \\ Duke University, \\ Durham, North Carolina, USA. \\ E-mail: yzhang@math.duke.edu}

\begin{abstract} \ \
 The stacked contact process is a stochastic model for the spread of an infection within a population of hosts located on the~$d$-dimensional integer lattice.
 Regardless of whether they are healthy or infected, hosts give birth and die at the same rate and in accordance to the evolution rules of the neutral multitype contact process.
 The infection is transmitted both vertically from infected parents to their offspring and horizontally from infected hosts to nearby healthy hosts.
 The population survives if and only if the common birth rate of healthy and infected hosts exceeds the critical value of the basic contact process.
 The main purpose of this work is to study the existence of a phase transition between extinction and persistence of the infection in the parameter region where the hosts survive.
\end{abstract}

\begin{keyword}[class=AMS]
\kwd[Primary ]{60K35}
\end{keyword}

\begin{keyword}
\kwd{Multitype contact process, phase transition, vertical and horizontal infection.}
\end{keyword}

\end{frontmatter}


\section{Introduction}
\label{sec:intro}

\indent This paper is concerned with the stacked contact process which has been recently introduced and studied numerically in~\cite{court_blythe_allen_2012}.
 This process is a stochastic model for the spread of an infection within a population of hosts and is based on the framework of interacting particle systems.
 The model assumes that all the hosts give birth and die at the same rate regardless of whether they are healthy or infected, and that the infection is transmitted both
 vertically from infected parents to their offspring and horizontally from infected hosts to nearby healthy hosts.
 More precisely, the state of the process at time~$t$ is a spatial configuration
 $$ \xi_t : \Z^d \longrightarrow \{0, 1, 2 \} $$
 where state~0 means empty, state~1 means occupied by a healthy host, and state~2 means occupied by an infected host.
 The inclusion of an explicit spatial structure in the form of local interactions, meaning that hosts can only interact with nearby hosts, is another important component
 of the model.
 In particular, the dynamics is built under the assumption that the state at vertex~$x$ is updated at a rate that only depends on the state at~$x$ and in the neighborhood
 $$ \begin{array}{l} N_x \ := \ \{y \in \Z^d : y \neq x \ \hbox{and} \ \max_{i = 1, 2, \ldots, d} \,|y_i - x_i| \leq L \}. \end{array} $$
 Here, the parameter~$L$ is an integer which is referred to as the range of the interactions.
 From this collection of interaction neighborhoods, one defines the fraction of neighbors of every vertex~$x$ which are in state~$j$ as
 $$ f_j (x, \xi) \ := \ \card \{y \in N_x : \xi (y) = j \} / \card N_x. $$
 The transition rates at~$x$ are then given by
 $$ \begin{array}{rclcrcl}
     0 \ \to \ 1 & \hbox{at rate} & \lambda_1 \,f_1 (x, \xi) & \quad & 1 \ \to \ 0 & \hbox{at rate} & 1 \vspace*{2pt} \\
     0 \ \to \ 2 & \hbox{at rate} & \lambda_1 \,f_2 (x, \xi) & \quad & 2 \ \to \ 0 & \hbox{at rate} & 1 \vspace*{2pt} \\
     1 \ \to \ 2 & \hbox{at rate} & \lambda_2 \,f_2 (x, \xi) & \quad & 2 \ \to \ 1 & \hbox{at rate} & \delta. \end{array} $$
 The first four transition rates at the top indicate that healthy and infected hosts give birth at the same rate~$\lambda_1$ and die at the same rate one.
 An offspring produced at~$x$ is sent to a vertex chosen uniformly at random from the interaction neighborhood~$N_x$ but the birth is suppressed when the target
 site is already occupied.
 Note that the offspring is always of the same type as its parent.
 The process described exclusively by these four transitions is the multitype contact process, completely analyzed in~\cite{neuhauser_1992} under the assumption that
 both types die at the same rate.
 The stacked contact process includes two additional transitions in which individuals can also change type:
 an infected host chooses a vertex at random from its neighborhood at rate~$\lambda_2$ and, when this vertex is occupied by a healthy host, infects this host,
 which corresponds to a horizontal transmission of the infection, and an infected host recovers at the spontaneous rate~$\delta$. \\


\noindent {\bf Main results in the general case} -- Interacting particle systems are ideally suited to understand the role of space but are often difficult to study
 due to the inclusion of local interactions that create spatial correlations;
 the smaller the dimension and the range of the interactions, the stronger these correlations.
 The main question about the process is whether the host population survives or goes extinct and, in case of survival, whether the infection persists or not,
 where we say that
 $$ \begin{array}{rcl}
             \hbox{hosts survive} & \hbox{when} & \liminf_{t \to \infty} \,P \,(\xi_t (x) \neq 0) \ > \ 0 \quad \hbox{for all} \quad x \in \Z^d \vspace*{4pt} \\
    \hbox{the infection persists} & \hbox{when} & \liminf_{t \to \infty} \,P \,(\xi_t (x) = 2) \ > \ 0 \quad \hbox{for all} \quad x \in \Z^d \end{array} $$
 for the system starting from the configuration with only infected hosts.
 Our first results are mainly qualitative but hold regardless of the spatial dimension and/or the dispersal range, while our last result gives a more detailed
 picture of the phase diagram of the process under the assumption that the range of the interactions is large, which weakens spatial correlations.

\indent Whether the host population survives or goes extinct can be easily answered by observing that hosts evolve like a basic contact process.
 Indeed, defining
 $$ \eta_t^1 (x) \ := \ \ind \{\xi_t (x) \neq 0 \} \quad \hbox{for all} \quad (x, t) \in \Z^d \times \R_+ $$
 results in a spin system with transition rates
 $$ \begin{array}{rcl}
     0 \ \to \ 1 & \hbox{at rate} & \lambda_1 \,f_1 (x, \xi) + \lambda_1 \,f_2 (x, \xi) = \lambda_1 \,f_1 (x, \eta^1) \vspace*{4pt} \\
     1 \ \to \ 0 & \hbox{at rate} & 1, \end{array} $$
 which is the contact process with birth rate~$\lambda_1$ and death rate one.
 It is known for this process that there exists a critical value~$\lambda_c \in (0, \infty)$ such that above this critical value the hosts survive whereas
 at and below this critical value the population goes extinct~\cite{bezuidenhout_grimmett_1990}.
 To avoid trivialities, we assume from now on that~$\lambda_1 > \lambda_c$ and study whether the infection persists or not.

\indent Basic coupling techniques to compare processes starting from the same configuration but with different parameters does not imply that the probability
 that the infection persist is nondecreasing with respect to the birth rate.
 However, as proved in Lemma~\ref{lem:monotone} below, a coupling argument shows that, everything else being fixed, the probability that the infection
 persists is nondecreasing with respect to the infection rate.
 This implies that there is at most one phase transition between extinction and survival of the infection at some critical value
 $$ \lambda_2^* \ = \ \lambda_2^* (\lambda_1, \delta) \ := \ \inf \,\{\lambda_2 \geq 0 : \hbox{the infection persists} \}. $$
 To study the existence of such a phase transition, we observe that, as pointed out in~\cite{court_blythe_allen_2012}, when the birth rate and the infection
 rate are equal: $\lambda_1 = \lambda_2$, the set of infected hosts again evolves like a basic contact process.
 Indeed, since in this case infected hosts give birth onto adjacent empty vertices and infect adjacent healthy hosts at the same rate, letting~$\lambda$ be
 the common value of the birth and infection rates, and defining
 $$ \eta_t^2 (x) \ := \ \ind \{\xi_t (x) = 2 \} \quad \hbox{for all} \quad (x, t) \in \Z^d \times \R_+ $$
 results in a spin system with transition rates
 $$ \begin{array}{rcl}
     0 \ \to \ 1 & \hbox{at rate} & \lambda \,f_2 (x, \xi) = \lambda \,f_1 (x, \eta^2) \vspace*{4pt} \\
     1 \ \to \ 0 & \hbox{at rate} & 1 + \delta, \end{array} $$
 which is the contact process with birth rate~$\lambda$ and death rate~$1 + \delta$.
 This implies that, when birth and infection rates are equal, the infection persists if and only if~$\lambda > (1 + \delta) \,\lambda_c$.
 Using a coupling argument to compare the process in which birth and infection rates are equal with the general stacked contact process, we can improve
 this result as stated in the following theorem.
\begin{theorem} --
\label{th:coupling}
 For all recovery rate~$\delta$,
 $$ \begin{array}{rcl}
    \hbox{the infection dies out} & \hbox{when} & \max \,(\lambda_1, \lambda_2) \leq (1 + \delta) \,\lambda_c \vspace*{3pt} \\
    \hbox{the infection persists} & \hbox{when} & \min \,(\lambda_1, \lambda_2) > (1 + \delta) \,\lambda_c. \end{array} $$
\end{theorem}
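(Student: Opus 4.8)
The plan is to prove both halves by sandwiching the infection process~$\eta_t^2$ between two basic contact processes that share its death rate $1+\delta$ but carry the constant birth rates $\min(\lambda_1,\lambda_2)$ and $\max(\lambda_1,\lambda_2)$, and then to invoke the survival/extinction dichotomy for the basic contact process. The starting point is to read the dynamics of~$\eta^2$ off the rates of~$\xi$. A site in state~2 leaves that state—either by death ($2\to0$ at rate~1) or by recovery ($2\to1$ at rate~$\delta$)—at total rate $1+\delta$, exactly the death rate of a contact process with parameter $1+\delta$. A site that is \emph{not} infected becomes infected only through an infected neighbor: an empty site is colonized ($0\to2$) at rate $\lambda_1 f_2(x,\xi)$ while a healthy site is infected ($1\to2$) at rate $\lambda_2 f_2(x,\xi)$. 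The crucial observation is that, regardless of whether the target is empty or healthy, this per-site infection rate is $f_2(x,\xi)$ times a constant lying in $[\min(\lambda_1,\lambda_2),\max(\lambda_1,\lambda_2)]$.

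First I would build a single Harris graphical representation driving~$\xi$ together with two auxiliary contact processes $\zeta^-$ and $\zeta^+$ of common death rate $1+\delta$ and birth rates $\min(\lambda_1,\lambda_2)$ and $\max(\lambda_1,\lambda_2)$. For each ordered pair of neighbors $(y,x)$ I lay down a Poisson process of rate $\max(\lambda_1,\lambda_2)/\card N_x$ whose arrows carry i.i.d.\ marks uniform on $[0,1]$, and I share death and recovery marks of rates~1 and~$\delta$ at every site. At an arrow from~$y$ to~$x$ bearing mark~$U$: the process $\zeta^+$ always attempts a birth; $\xi$ infects~$x$ when $\xi(y)=2$ and either $x$ is empty with $U\le\lambda_1/\max(\lambda_1,\lambda_2)$ or $x$ is healthy with $U\le\lambda_2/\max(\lambda_1,\lambda_2)$; and $\zeta^-$ attempts a birth only when $U\le\min(\lambda_1,\lambda_2)/\max(\lambda_1,\lambda_2)$. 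A check of the per-site rates confirms that these prescriptions reproduce exactly the infection rates $\lambda_1 f_2$ and $\lambda_2 f_2$ for~$\xi$ and the correct birth rates for $\zeta^\pm$, while a death or recovery mark simultaneously removes the infection at its site in~$\xi$ and kills the corresponding particles of $\zeta^\pm$. Starting all three processes from the full configuration (consistent with $\xi_0\equiv2$, hence $\eta^2_0\equiv1$), I would prove by induction over the successive graphical events that $\zeta_t^-\le\eta_t^2\le\zeta_t^+$ for all~$t$.

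It then remains to translate the survival threshold. Rescaling time by the factor $1+\delta$ turns a contact process of birth rate~$\beta$ and death rate $1+\delta$ into a standard unit-death-rate contact process of birth rate $\beta/(1+\delta)$, which survives if and only if $\beta/(1+\delta)>\lambda_c$ and dies out at and below criticality by the extinction of the critical contact process~\cite{bezuidenhout_grimmett_1990}. Applied to $\zeta^+$ with $\beta=\max(\lambda_1,\lambda_2)\le(1+\delta)\lambda_c$ this yields extinction of $\zeta^+$, whence $\eta^2$ and the infection die out. Applied to $\zeta^-$ with $\beta=\min(\lambda_1,\lambda_2)>(1+\delta)\lambda_c$ it yields survival of $\zeta^-$, and by the domination $\liminf_t P(\xi_t(x)=2)\ge\liminf_t P(\zeta_t^-(x)=1)>0$, which is persistence.

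The main obstacle is verifying that the graphical coupling really is monotone, since the infection rate into a non-infected site depends on whether that site is empty or healthy, information invisible to the comparison processes. I must check event by event that an arrow triggering a birth in $\zeta^-$ always triggers a genuine infection in~$\xi$, and dually that every infection in~$\xi$ is shadowed by an available birth arrow of $\zeta^+$. The key point is that $\zeta^-$ fires only when $U\le\min(\lambda_1,\lambda_2)/\max(\lambda_1,\lambda_2)$, in which case $U$ is below both thresholds $\lambda_1/\max$ and $\lambda_2/\max$, so the same arrow forces $\xi(x)=2$ whether $x$ was empty or healthy; the differing constants $\lambda_1,\lambda_2$ affect only which arrows~$\xi$ uses between the two nested thresholds and can never break the inequalities. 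I note that this direct construction sidesteps the failure of monotonicity of persistence in the birth rate~$\lambda_1$ and, in particular, covers the regime $\lambda_2<\lambda_1$ that is not reached by the infection-rate monotonicity of Lemma~\ref{lem:monotone} alone.
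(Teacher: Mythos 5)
Your proof is correct, but the extinction half takes a genuinely more direct route than the paper's. For persistence your argument is essentially the paper's Lemma~\ref{lem:min}: there the comparison process is a basic contact process with birth rate $\lambda_0 \in ((1+\delta)\,\lambda_c, \min(\lambda_1,\lambda_2))$ and death rate $1+\delta$, with recovery marks acting as deaths and the arrows split into rate-$\lambda_0$, rate-$(\lambda_1-\lambda_0)$ and rate-$(\lambda_2-\lambda_0)$ families instead of your uniform thinning marks; the key observation --- an arrow from an infected site infects its target whether that target is empty or healthy --- is the same. For extinction, however, the paper never dominates the infection set by a contact process directly: it first raises $\lambda_2$ to $(1+\delta)\,\lambda_c$ using the monotonicity of Lemma~\ref{lem:monotone}, and then couples the resulting process with a second \emph{stacked} contact process whose birth and infection rates are both equal to $(1+\delta)\,\lambda_c$, in which the set of infected hosts is exactly a critical contact process (Lemma~\ref{lem:max} and Table~\ref{tab:max}). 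Your one-step thinning domination $\eta^2_t \preceq \zeta^+_t$ by the contact process with birth rate $\max(\lambda_1,\lambda_2)$ and death rate $1+\delta$ is valid --- both infection channels have rate at most $\max(\lambda_1,\lambda_2)\,f_2$ while exit from state~2 occurs at rate exactly $1+\delta$ --- and it is leaner: it needs neither Lemma~\ref{lem:monotone} nor the intermediate stacked process, and a single graphical construction yields both bounds at once. What the paper's detour buys is reuse of machinery it needs anyway (Lemma~\ref{lem:monotone} is what makes $\lambda_2^*$ well defined) and a slightly stronger conclusion (pointwise domination of full configurations $\xi^1_t(x) \leq \xi^2_t(x)$, not just of infection sets). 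Two details to tighten in a final write-up: your graphical representation must also carry the independent arrows driving healthy births $0 \to 1$, which you leave implicit (they are harmless precisely because your case analysis covers every possible state of the target site), and the event-by-event induction should formally be run along the almost surely finite influence graph of each space-time point, exactly as in the proofs of Lemmas~\ref{lem:attractive} and~\ref{lem:monotone}.
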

 This can be translated in terms of the critical value~$\lambda_2^*$ as follows:
\begin{equation}
\label{eq:critical}
  \begin{array}{rcl}
    \lambda_2^* (\lambda_1, \delta) \ \geq \ (1 + \delta) \,\lambda_c & \hbox{when} & \lambda_1 \ \leq \ (1 + \delta) \,\lambda_c \vspace*{4pt} \\
    \lambda_2^* (\lambda_1, \delta) \ \leq \ (1 + \delta) \,\lambda_c & \hbox{when} & \lambda_1 \ > \ (1 + \delta) \,\lambda_c. \end{array}
\end{equation}
 We now look at the remaining parameter region
 $$ \min \,(\lambda_1, \lambda_2) \ \leq \ (1 + \delta) \,\lambda_c \ < \ \max \,(\lambda_1, \lambda_2). $$
 Our next theorem gives an improvement of the second part of~\eqref{eq:critical} showing that the critical infection rate is not only finite but also
 positive for all values of the birth rate and recovery rate.
 This implies the existence of exactly one phase transition when~$\lambda_1 > (1 + \delta) \,\lambda_c$.
\begin{theorem} --
\label{th:extinction}
 For all~$\lambda_1$ and~$\delta$, we have~$\lambda_2^* (\lambda_1, \delta) > 0$.
\end{theorem}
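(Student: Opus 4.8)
The plan is to build the process from a graphical representation and to reduce the event $\{\xi_t(x) = 2\}$ to the existence of a backward \emph{infection path}, whose first moment decays exponentially as soon as $\lambda_2 < \delta$. This shows that the infection dies out for every $\lambda_2 < \delta$, and since none of these values can belong to the persistence set, it yields $\lambda_2^*(\lambda_1, \delta) \geq \delta > 0$.

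First I would fix the usual graphical representation. For each ordered pair $(x,y)$ with $y \in N_x$, draw birth arrows at rate $\lambda_1 / \card N_x$ (at which the offspring inherits the type of the source, the birth being suppressed onto occupied sites) and infection arrows at rate $\lambda_2 / \card N_x$ (active only when the source has type~$2$ and the target type~$1$); at each site put independent death marks at rate~$1$ and recovery marks at rate~$\delta$, the latter turning type~$2$ into type~$1$. With this construction the occupied set is a contact process, while the \emph{type} borne by an occupied site is transported along its genealogical lineage: copied across birth arrows, reset to~$1$ by recovery marks, and reset to~$2$ by active infection arrows.

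The key step is a dual characterization. Reading the type of $(x,t)$ backward along its ancestral lineage, I claim that $\xi_t(x) = 2$ forces the existence of a \emph{certificate}: a finite chain of lineage segments, each free of recovery marks, linked by infection arrows whose sources are themselves certified of type~$2$, and ending at a type-$2$ site at time~$0$ (automatic here, since the process starts from all~$2$'s). Along any lineage the recovery marks and the infection arrows pointing into it are independent Poisson processes of rates $\delta$ and $\lambda_2$, uniformly in space, so the expected number $m(t)$ of certificates issued from $(x,t)$ satisfies the renewal identity
$$ m(t) \ = \ e^{-\delta t} \ + \ \lambda_2 \int_0^t e^{-\delta s} \, m(t-s) \, ds, $$
where the first term is the jump-free certificate that reaches time~$0$ with no recovery, and the integral decomposes a certificate according to the backward time $s$ of its most recent infection arrow (weight $e^{-\delta s}$ for the recovery-free final segment, intensity $\lambda_2 \, ds$ for the arrow, and $m(t-s)$ for the sub-certificate rooted at its source). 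Taking Laplace transforms gives $\widehat m(p) = (p + \delta - \lambda_2)^{-1}$, that is $m(t) = e^{-(\delta - \lambda_2) t}$, and the first-moment bound yields
$$ P \, (\xi_t(x) = 2) \ \leq \ P \, (\exists \ \text{certificate}) \ \leq \ m(t) \ = \ e^{-(\delta - \lambda_2) \, t}. $$
For $\lambda_2 < \delta$ the right-hand side vanishes as $t \to \infty$, so $\liminf_{t \to \infty} P(\xi_t(x) = 2) = 0$ and the infection dies out; as this holds for every $\lambda_2 < \delta$, we obtain $\lambda_2^*(\lambda_1, \delta) \geq \delta > 0$.

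The main obstacle is the dual characterization rather than the final computation. One must verify carefully that type~$2$ at $(x,t)$ can be traced back \emph{only} through a chain of horizontal infections separating recovery-free lineage segments — in particular that vertical transmission is \emph{free}, appearing merely as the lineage continuing to carry the same type and hence producing no branching in the backward exploration — so that the certificate is a path and its most recent infection arrow induces the clean renewal decomposition above. Because the successive segments occupy disjoint time intervals, the Poisson marks governing them are independent and the expected-count recursion closes exactly, so no delicate correlation estimate is needed. It is precisely this structure that renders the bound uniform in $\lambda_1$: the birth rate governs only the spatial motion and survival of the lineages, never the type they carry, and therefore never enters the estimate, which is what allows the conclusion to hold for arbitrarily large $\lambda_1$.
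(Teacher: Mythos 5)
Your approach is correct in substance, and it is genuinely different from the paper's proof of Theorem~\ref{th:extinction} --- with a stronger conclusion. The paper argues by block construction: for~$\lambda_2 = 0$, the infection can cross a large space-time box only along a recovery-free invasion path, an event of exponentially small probability (Lemma~\ref{lem:invasion-1}); a positive infection rate is then handled crudely, by requiring that \emph{no} infection arrow at all points into the box (Lemma~\ref{lem:invasion-2}), and extinction follows from a comparison with subcritical oriented site percolation (Lemma~\ref{lem:percolation} and~\cite{vandenberg_grimmett_schinazi_1998}). The price of that route is a bound on~$\lambda_2^*$ of order the inverse volume of a box, depending on~$\lambda_1$ and~$\delta$ through the constant~$a_1$. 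Your certificate argument rests on the same structural fact as the paper's invasion paths --- the type of an occupied site is carried along its unique ancestral lineage, reset to~1 by recovery marks and to~2 by infection arrows with infected sources, while the genealogy itself is determined by the birth arrows and death marks alone --- but exploits it exactly rather than through a union bound over a box: the first-moment computation gives $P(\xi_t (x) = 2) \leq e^{-(\delta - \lambda_2) t}$, hence the explicit bound $\lambda_2^* (\lambda_1, \delta) \geq \delta$, uniform in~$\lambda_1$, in the dimension and in the range, and asymptotically sharp: by Theorem~\ref{th:long-range}, the persistence threshold for large range is at most $\delta / (1 - 1/\lambda_1)$, which decreases to~$\delta$ as $\lambda_1 \to \infty$. (Both proofs are vacuous when $\delta = 0$, and necessarily so: starting from all infected hosts with $\delta = 0$, no healthy host can ever appear, so the infection persists for every $\lambda_2 \geq 0$; the theorem implicitly assumes $\delta > 0$.)

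One step of your write-up needs tightening, though the inequality you need does hold. The renewal relation you state for~$m(t)$, the expected number of certificates, is not an identity: the source of an infection arrow need not be occupied (in which case it roots no sub-certificate), and, conditionally on the lineage of~$(x, t)$, the occupancy and genealogy near the source are correlated with that lineage, so the sub-count is not exactly~$m (t - s)$. The clean fix is to condition on the occupancy process: since the genealogy is a function of the birth arrows and death marks only, it is independent of the recovery marks and infection arrows, so given the occupancy history the lineage of~$(x, t)$ is a deterministic path along which recoveries and incoming infection arrows are independent Poisson processes with rates~$\delta$ and~$\lambda_2$. Letting~$M(t)$ be the supremum, over sites and occupancy histories, of the conditional expected number of certificates, your decomposition according to the most recent arrow yields the renewal \emph{inequality}
$$ M (t) \ \leq \ e^{- \delta t} + \lambda_2 \int_0^t e^{- \delta (t - s)} \,M (s) \,ds $$
which, combined with the a priori bound $M (t) \leq e^{\lambda_2 t}$ (the expected number of arrow chains, ignoring the recovery-free requirement), gives $M (t) \leq e^{-(\delta - \lambda_2) t}$ by iteration. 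Since $P (\xi_t (x) = 2)$ is at most the expected number of certificates, which is at most~$M (t)$, the overcounting goes in the direction you need and the conclusion $\lambda_2^* (\lambda_1, \delta) \geq \delta > 0$ stands.
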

 Similarly, the next theorem gives an improvement of the first part of~\eqref{eq:critical} showing that the critical infection rate is not only positive
 but also finite.
 This, however, only holds when the birth rate exceeds some finite universal critical value~$\lambda_1^*$ that depends on the spatial dimension and the
 range of the interactions but not on the other parameters.
\begin{theorem} --
\label{th:survival}
 There exists~$\lambda_1^* < \infty$ such that
 $$ \lambda_2^* (\lambda_1, \delta) < \infty \quad \hbox{for all} \quad \lambda_1 > \lambda_1^* \quad \hbox{and} \quad \delta \geq 0. $$
\end{theorem}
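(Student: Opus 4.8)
The plan is to exploit the large range by comparing the local densities of the process with the solution of the mean-field system of differential equations and then feeding the resulting estimates into a standard block construction. First, by the monotonicity of persistence with respect to the infection rate established in Lemma~\ref{lem:monotone}, it suffices to exhibit, for each fixed~$\delta$, one finite value of~$\lambda_2$ for which the infection persists, which gives~$\lambda_2^* (\lambda_1, \delta) < \infty$. Writing~$u$ and~$v$ for the densities of healthy and infected hosts and~$w = u + v$ for the density of occupied sites, the mean-field equations read
\[
  \dot u \ = \ \lambda_1 u (1 - w) + \delta v - u - \lambda_2 u v, \qquad
  \dot v \ = \ \lambda_1 v (1 - w) + \lambda_2 u v - (1 + \delta) \, v .
\]
In particular~$\dot w = \lambda_1 w (1 - w) - w$, so the occupied density obeys the logistic equation of the basic contact process and, for~$\lambda_1 > 1$, converges to~$w^* = 1 - 1/\lambda_1$ independently of the infection. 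Linearizing the~$v$-equation at the infection-free equilibrium~$(u, v) = (w^*, 0)$ produces the growth rate~$\lambda_2 (1 - 1/\lambda_1) - \delta$, so the infected state invades in the mean-field model as soon as
\[
  \lambda_2 \,(1 - 1/\lambda_1) \ > \ \delta .
\]
For any~$\lambda_1 > 1$ and any~$\delta$ this can be arranged by taking~$\lambda_2$ large, and the corresponding growth rate can be made as large as we like; this is precisely why the threshold~$\lambda_1^*$ can ultimately be chosen independently of~$\delta$, as the statement requires.

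Next I would make this picture rigorous using the weak correlations induced by a large range. Rescaling space by~$L$ and looking at the empirical densities of states~$1$ and~$2$ in boxes of side of order~$\epsilon L$, a law of large numbers should show that over any bounded time interval these densities stay within~$O(L^{-d/2})$ of the solution of the mean-field system above with probability tending to one as~$L \to \infty$, the point being that each vertex interacts with of order~$L^d$ neighbors so that the empirical transition rates concentrate around their means. Fixing~$\lambda_1$ and~$\lambda_2$ so that the invasion condition holds with a definite margin, this lets me track two facts at once: starting from a box in which the occupied density is already close to~$w^*$ and the infected density exceeds some small threshold~$\rho$, after a bounded time the infected density in that box rises to a level close to the endemic value, and the range-$L$ births and infections push a density of at least~$\rho$ of infected hosts into the adjacent boxes.

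These two facts are exactly the input for a block construction in the spirit of the standard comparison with oriented percolation. I would call a space-time block \emph{good} when the above density estimates hold on it, an event that depends only on the graphical representation in a bounded space-time region and hence has finite range of dependence, and whose probability exceeds any prescribed level once~$L$ is large. On the good events a box that is occupied near density~$w^*$ with infected density at least~$\rho$ at one level forces its space-time neighbors to be in the same state at the next level, so that the collection of infected blocks dominates a supercritical one-dependent oriented percolation process. The usual percolation estimates then yield survival of the infected blocks starting from a single one, which translates into~$\liminf_{t \to \infty} P (\xi_t (x) = 2) > 0$ for all~$\lambda_1 > \lambda_1^*$ and all~$\delta$, with~$\lambda_1^*$ depending only on~$d$ and~$L$.

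The hard part will be the second step: controlling the local densities uniformly in the possibly very large rates~$\lambda_2$ and~$\delta$. Since~$\lambda_2$ must be chosen to grow with~$\delta$, the dynamics become arbitrarily fast, and one must verify that the law of large numbers and the resulting block estimates deteriorate only through the choice of~$\lambda_2$ and never force~$\lambda_1^*$ to depend on~$\delta$; this calls for a density comparison whose error bounds depend on the total jump rate in a transparent way. The other delicate point is the spatial spreading of the infection into occupied-but-healthy territory, as opposed to its mere temporal growth inside a fixed box, and it is here that the range-$L$ coupling between adjacent blocks, together with the strictly positive mean-field growth rate secured above, does the essential work.
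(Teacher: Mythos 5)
There is a genuine gap, and it is structural rather than technical: your argument proves (at best) Theorem~\ref{th:long-range}, not Theorem~\ref{th:survival}. The theorem at hand concerns the process with an \emph{arbitrary fixed} interaction range~$L$ --- the paper carries out the proof for nearest-neighbor interactions and notes that it extends to any range --- so the only freedom is in the choice of~$\lambda_1^*$, which may depend on~$d$ and~$L$ but must then work for every~$\lambda_1 > \lambda_1^*$ and every~$\delta \geq 0$. Your central tool, namely empirical box densities tracking the mean-field ODE with error~$O (L^{-d/2})$ ``with probability tending to one as~$L \to \infty$,'' produces no estimate at all once~$L$ is fixed, and for a small fixed range the mean-field picture is simply false: for the nearest-neighbor contact process in~$d = 1$ the critical value is strictly larger than~$1$ (numerically~$\approx 1.65$), so a birth rate~$\lambda_1$ slightly above~$1$ makes the hosts, and hence the infection, die out no matter what~$\lambda_2$ is, while your invasion condition~$\lambda_2 \,(1 - 1/\lambda_1) > \delta$ would predict persistence. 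Even if one reinterprets the statement as a large-$L$ result, the difficulty you flag in your last paragraph and leave unresolved is exactly where the quantifiers turn against you: $L$ must be fixed before~$\delta$, yet~$\lambda_2$ has to grow at least linearly in~$\delta$, and any Gronwall-type bound for the deviation of box densities from the ODE flow degrades like~$\exp (C \,(\lambda_2 + \delta) \,t) \,L^{-d/2}$, which is useless at fixed~$L$ for large~$\delta$.

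The paper resolves the $\delta$-uniformity by a different and more elementary decomposition, which is worth internalizing. It runs a microscopic two-site block construction. Lemma~\ref{lem:invade} guarantees that the three sites~$x - 1$, $x$, $x + 1$ stay occupied throughout a time window~$(2T, 4T]$; this event involves only death marks and birth arrows, so its probability is bounded below in terms of~$\lambda_1$ and~$T$ alone, and this is precisely what makes~$\lambda_1^*$ independent of~$\delta$. Then, conditionally on that scaffolding, Lemma~\ref{lem:infect} keeps the sites infected by winning a finite sequence of races between recovery marks (rate~$\delta$) and infection arrows (rate~$\lambda_2 / 2$): each race is won with probability~$\lambda_2 / (\lambda_2 + 2 \delta) \to 1$ as~$\lambda_2 \to \infty$ with~$\delta$ fixed, and the number of races on~$[0, 4T]$ is Poisson with mean proportional to~$\delta T$, which is harmless because~$\lambda_2^*$ is allowed to depend on~$\delta$. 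Feeding these two lemmas into a comparison with supercritical oriented site percolation (density~$1 - \ep$ with~$\ep < \ep_c$ fixed first, which is what fixes~$T$ and~$\lambda_1^*$) finishes the proof. The moral is that the correct decomposition isolates all of the $\delta$-dependence into events whose probability can be driven to one by~$\lambda_2$ alone, over a time horizon fixed independently of~$\delta$, instead of trying to control the full dynamics uniformly in~$\delta$ through a density approximation.
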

\begin{figure}[t]
\centering
\scalebox{0.40}{\input{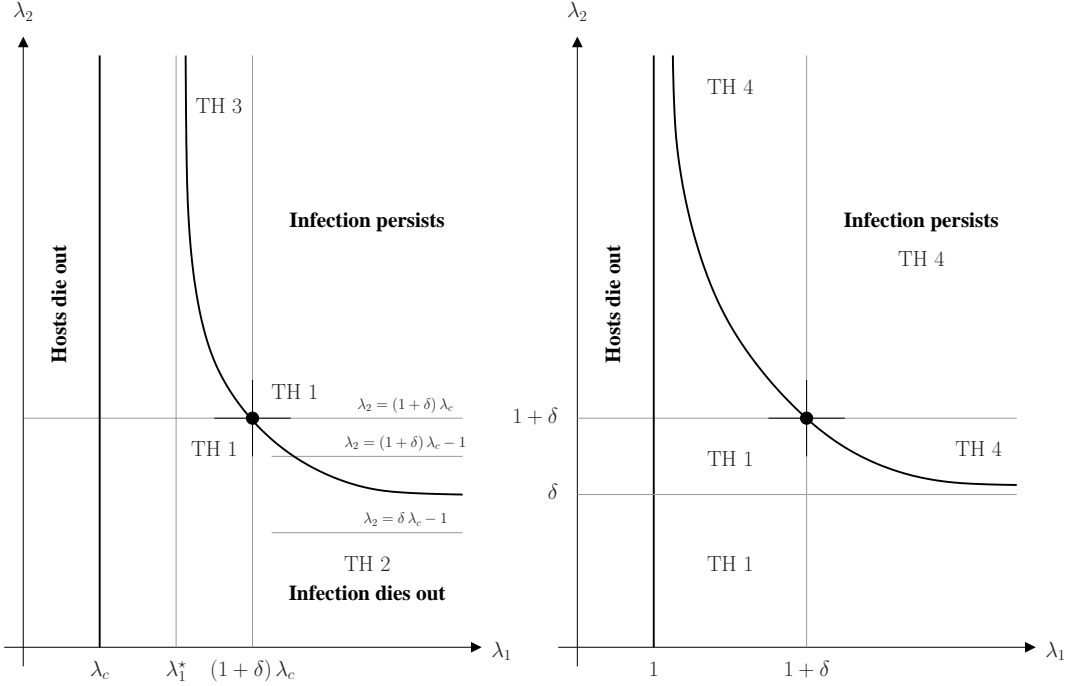}}
\caption{\upshape{Summary of our results and phase diagram of the stacked contact process with short range interactions on the left and long range of interactions on the right.}}
\label{fig:diagram}
\end{figure}
\begin{figure}[t]
\centering
\scalebox{0.36}{\input{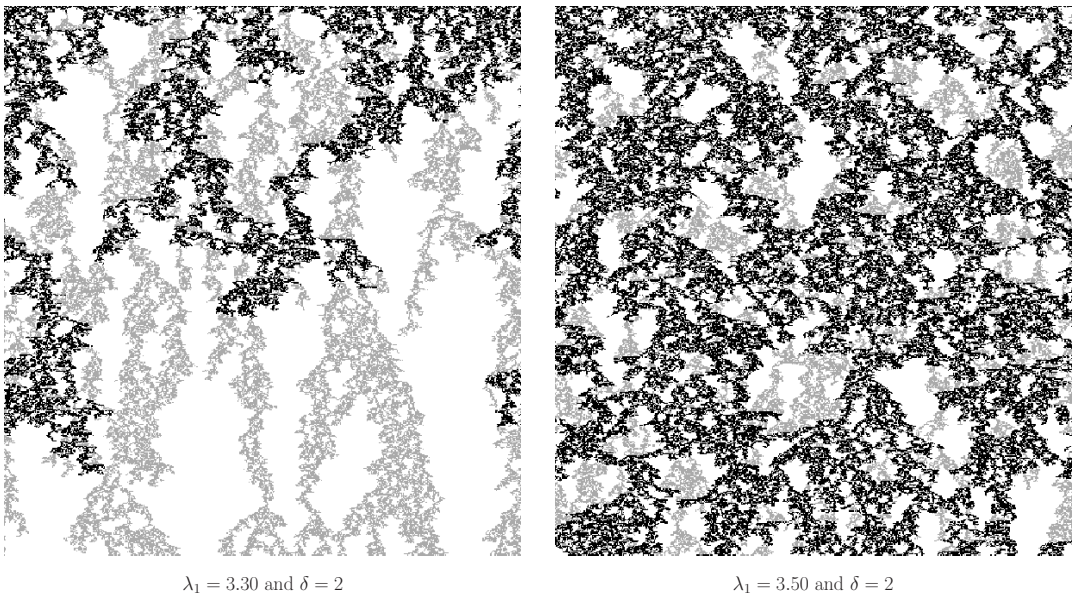}}
\caption{\upshape{Realizations of the stacked contact process on the torus~$\Z / 500 \Z$ with periodic boundary conditions until time~500 at the bottom of the pictures.
 The color code is white for empty, grey for occupied by a healthy host and black for occupied by an infected host.
 In both pictures, all vertices are initially infected and the infection rate is infinite.
 In the first realization, the birth rate is only slightly larger than the critical value of the contact process, which leads to extinction of the infection
 even when the infection rate is infinite.}}
\label{fig:simulation}
\end{figure}
 Combining the previous two theorems with~\eqref{eq:critical}, we deduce that
 $$ \lambda_2^* \in (0, \infty) \quad \hbox{for all} \quad \lambda_1 > \min ((1 + \delta) \,\lambda_c, \lambda_1^*) $$
 which also implies that, for these values of the birth rate, there is exactly one phase transition between extinction and survival of the infection.

\indent For a summary of~Theorems~\ref{th:coupling}--\ref{th:survival}, we refer to the left-hand side of Figure~\ref{fig:diagram}.
 Even though the analysis of the mean-field model below suggests that, provided the birth parameter is supercritical to ensure survival of the host
 population, the infection also persists when the infection parameter is large enough, we conjecture, as represented in the picture, that this result
 is not true for the spatial model, i.e., there exists a critical value~$\lambda_1^{\star} > \lambda_c$ such that the infection dies out when
 $$ \lambda_c \ < \ \lambda_1 \ < \ \lambda_1^{\star} \quad \hbox{regardless of} \quad \lambda_2. $$
 Here is the idea behind our intuition: when the set of occupied vertices percolates at equilibrium, it is expected that the infection persists when
 the infection rate is large.
 In contrast, when the birth rate is only slightly larger than the critical value of the contact process, the system mostly consists of very small clusters
 of hosts far from each other.
 With high probability, an infected cluster either dies out due to stochasticity or becomes healthy before it can cross another cluster of hosts, thus
 leading to the extinction of the infection.
 Figure~\ref{fig:simulation} shows an example of realization where the birth rate is only slightly supercritical and the infection dies out even when the
 infection rate is infinite, i.e., all the hosts contained in the same connected component as an infected host get instantaneously infected.
 Now, to explain the right end of the transition curve on the left-hand side of the figure, we observe that, in the limit as~$\lambda_1$ tends to infinity, all vertices
 are occupied by a host.
 More precisely, each time a host dies, it is instantaneously replaced by the offspring of a neighbor, thus leading to the transition rates
 $$ \begin{array}{rcl}
     1 \ \to \ 2 & \hbox{at rate} & (\lambda_2 + 1) \,f_2 (x, \xi) \vspace*{4pt} \\
     2 \ \to \ 1 & \hbox{at rate} &  f_1 (x, \xi) + \delta. \end{array} $$
 Using that~$\delta \leq f_1 (x, \xi) + \delta \leq 1 + \delta$, we can compare the process with two contact processes and deduce that, in the limit
 as~$\lambda_1$ goes to infinity, we have
 $$ \delta \,\lambda_c - 1 \ \leq \ \lambda_2^* (\infty, \delta) \ \leq \ (1 + \delta) \,\lambda_c - 1 $$
 as represented in the picture.
 Note however that this observation does not provide any universal lower bound for the critical value of the infection rate because the set of infected hosts
 is not monotone (for the inclusion) with respect to the birth rate. \\


\begin{figure}[t]
\centering
\scalebox{0.20}{\input{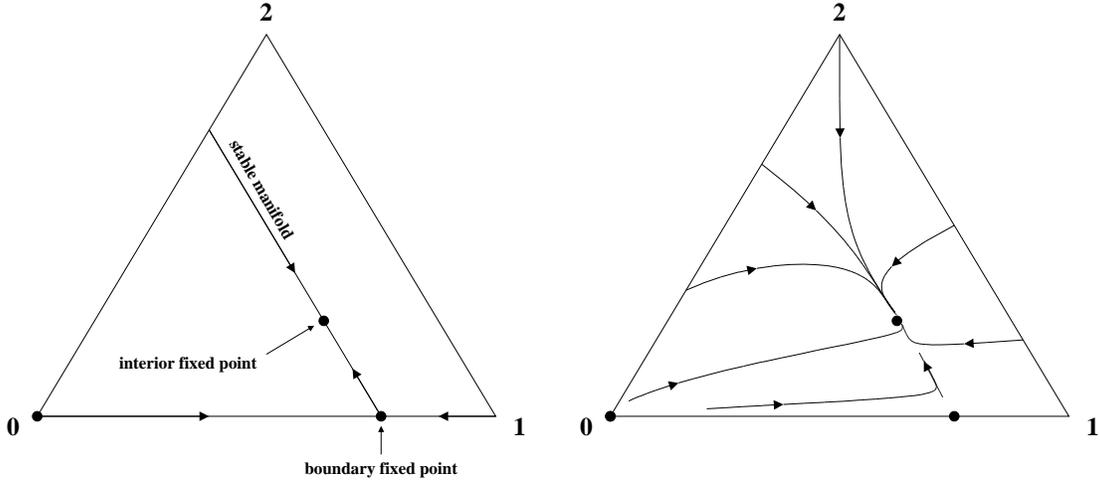}}
\caption{\upshape{Solution curves of the mean-field model when~$\lambda_1 = 4$, $\lambda_2 = 2$ and $\delta = 2$.}}
\label{fig:mean-field}
\end{figure}

\noindent {\bf Long range limits} -- Finally, we look at the spatial process when the range of the interactions is very large.
 To give the intuition behind our last theorem, we first study the non-spatial deterministic counterpart of the stochastic process called mean-field approximation,
 which is obtained by assuming that the population is well-mixing, which results in a deterministic system of ordinary differential equations
 for the density~$u_j$ of vertices in state~$j = 0, 1, 2$.
 In the case of the stacked contact process, this system can be written as
\begin{equation}
\label{eq:mean-field}
  \begin{array}{rcl}
    u_1' & = & \lambda_1 \,u_1 \,(1 - u_1 - u_2) - u_1 - \lambda_2 \,u_2 \,u_1 + \delta \,u_2 \vspace*{4pt} \\
    u_2' & = & \lambda_1 \,u_2 \,(1 - u_1 - u_2) - u_2 + \lambda_2 \,u_2 \,u_1 - \delta \,u_2. \end{array}
\end{equation}
 This system is only two-dimensional because the three densities sum up to one.
 To understand whether the hosts survive or die out, we let~$u = u_1 + u_2$ and note that
 $$ \begin{array}{rcl}
    u' \ = \ u_1' + u_2' & = & \lambda_1 \,(u_1 + u_2)(1 - u_1 - u_2) - u_1 - u_2 \vspace*{4pt} \\
                         & = & \lambda_1 \,u \,(1 - u) - u \ = \ (\lambda_1 \,(1 - u) - 1) \,u. \end{array} $$
 It is straightforward to deduce that, when~$u (0) > 0$
\begin{enumerate}
 \item if~$\lambda_1 \leq 1$, there is extinction: $\lim_{t \to \infty} \,u (t) = 0$, \vspace*{4pt}
 \item if~$\lambda_1 > 1$, there is survival: $\lim_{t \to \infty} \,u (t) = u_* := 1 - 1 / \lambda_1$.
\end{enumerate}
 To study whether the infection survives or dies out, we now assume that~$\lambda_1 > 1$ and look at the second equation in~\eqref{eq:mean-field} along the
 stable manifold~$u_1 + u_2 = u_*$. This gives
 $$ \begin{array}{rcl}
     u_2' & = & \lambda_1 \,u_2 \,(1 - u_*) - u_2 + \lambda_2 \,u_2 \,(u_* - u_2) - \delta \,u_2 \vspace*{4pt} \\
          & = & (\lambda_1 \,(1 - u_*) - 1 + \lambda_2 \,(u_* - u_2) - \delta) \,u_2 \ = \ (\lambda_2 \,(u_* - u_2) - \delta) \,u_2. \end{array} $$
 Again, it is straightforward to deduce that, when~$u_2 (0) > 0$
\begin{enumerate}
 \item if~$u_* \leq \delta / \lambda_2$, we have $\lim_{t \to \infty} \,u_2 (t) = 0$, \vspace*{4pt}
 \item if~$u_* > \delta / \lambda_2$, we have $\lim_{t \to \infty} \,u_2 (t) = u_* - \delta / \lambda_2 > 0$.
\end{enumerate}
 In conclusion, starting with~$u_2 (0) > 0$, the infection survives if and only if
\begin{equation}
\label{eq:coexist}
  \lambda_2 \,u_* \ = \ \lambda_2 \,(1 - 1 / \lambda_1) \ > \ \delta.
\end{equation}
 We refer the reader to Figure~\ref{fig:mean-field} for a schematic illustration of the three fixed points and stable manifold when~\eqref{eq:coexist} is satisfied,
 and the corresponding solution curves.
 Our last result shows that, for any set of parameters such that~\eqref{eq:coexist} holds, the infection survives as well for the stochastic process provided the
 dispersal range is large enough.
\begin{theorem} --
\label{th:long-range}
 Assume~\eqref{eq:coexist}. Then, the infection persists for all~$L$ sufficiently large.
\end{theorem}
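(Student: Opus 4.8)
The plan is to exploit the fact that, under~\eqref{eq:coexist}, the mean-field system~\eqref{eq:mean-field} possesses an asymptotically stable interior fixed point, and to show that when the range~$L$ is large the spatial process tracks this deterministic flow over bounded space-time windows, from which persistence of the infection follows by a block construction. First I would record the relevant information about~\eqref{eq:mean-field}: along the invariant line~$u_1 + u_2 = u_*$ the computation preceding the statement shows that~$u_2$ relaxes to~$u_2^* := u_* - \delta / \lambda_2 > 0$, and a routine linearization shows that the point~$(u_1^*, u_2^*) := (\delta / \lambda_2,\, u_* - \delta / \lambda_2)$ is attracting within the simplex~$\{u_1, u_2 \geq 0,\ u_1 + u_2 \leq 1\}$. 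I would fix a level~$\rho \in (0, u_2^*)$ and use stability to produce a neighborhood~$U$ of the fixed point and a time~$T$ such that any solution started in~$U$ stays in~$\{u_2 > \rho\}$ and returns to a smaller neighborhood~$U' \subset U$ by time~$T$; since the all-infected initial condition $(u_1,u_2)=(0,1)$ flows into~$U'$ under~\eqref{eq:coexist}, the starting configuration is also handled.

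The second ingredient is a law of large numbers that converts this ODE statement into a statement about the particle system. The rate at which the state of a vertex~$x$ is updated depends on the configuration only through the fractions~$f_j (x, \xi)$, each of which is an average over the~$\card N_x \asymp (2L + 1)^d$ neighbors of~$x$. Following the long-range comparison scheme of Durrett and Neuhauser, I would partition a macroscopic box into mesoscopic cells of side of order~$\epsilon L$ and show that the vector of empirical type-densities in these cells, viewed as a measure-valued process, converges as~$L \to \infty$ to the solution of~\eqref{eq:mean-field}, uniformly on~$[0, T]$. Concretely I would write the density in each cell as a drift plus a martingale, check that the drift matches the right-hand side of~\eqref{eq:mean-field} up to boundary corrections, bound the martingale part and its quadratic variation by~$O(1 / L^d)$, and close a Gronwall estimate to conclude that, with probability tending to one, the cell densities started in~$U'$ remain in~$U$ throughout~$[0, T]$.

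With these estimates in place I would run the block construction. Declare a space-time box~$(x, n) \in \Z^d \times \Z_+$ \emph{good} if, in the corresponding mesoscopic region and at time~$nT$, the empirical densities lie in~$U'$, so that in particular the infection density exceeds~$\rho$. The ODE stability together with the law of large numbers shows that a good box at level~$n$ produces good boxes at level~$n + 1$ in each neighboring spatial location with probability as close to one as desired by taking~$L$ large. Since over a time interval of length~$T$ only the configuration within distance of order~$LT$ of a box can influence it, good events for well-separated boxes are measurable with respect to disjoint regions of the graphical representation, and the dependence is finite-range. A standard comparison with supercritical oriented percolation then shows that the set of good boxes dominates a percolation process with nonzero density, and positivity of this density yields~$\liminf_{t \to \infty} P \,(\xi_t (x) = 2) > 0$, which is the asserted persistence.

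The main obstacle will be the local law of large numbers in the presence of an unknown exterior configuration: because, as emphasized in the text, the set of infected hosts is not monotone in the parameters, I cannot sandwich the process between favorable and unfavorable copies and must instead control directly the boundary flux into each mesoscopic cell. The key point to verify carefully is that over the fixed window~$[0, T]$ influence travels only a distance of order~$L$, so that cells of side~$\gg L$ feel their boundaries only through a vanishing fraction of their volume; this is what guarantees both that the drift of the cell densities genuinely matches~\eqref{eq:mean-field} and that good boxes depend on disjoint space-time regions, which is precisely what makes the oriented percolation comparison legitimate.
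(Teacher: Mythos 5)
Your overall architecture (long-range limit, block construction, coupling with supercritical oriented site percolation) agrees with the paper's, but the engine you propose to drive the block construction has two genuine gaps. The decisive one concerns spatial propagation. The mean-field system~\eqref{eq:mean-field} leaves the set~$\{u_2 = 0\}$ invariant, so ``stability of the interior fixed point plus a law of large numbers'' can at best show that a region whose densities already lie near the fixed point keeps them there; it cannot show that a good box at level~$n$ produces good boxes at the \emph{neighboring} spatial locations at level~$n + 1$, since those regions may initially carry no infection at all, together with an arbitrary host configuration. Invasion of new territory is exactly where the work lies: the infection must first be seeded across at a microscopically small density and then amplified, and since the seeded density is tiny, the relaxation time of~\eqref{eq:mean-field} from such data diverges, which breaks any fixed-$T$ block scheme based on tracking the ODE near its equilibrium. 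The paper handles precisely this point by never tracking the ODE at all: as long as the infected density in a box is small and the host density is close to~$u_*$, each infected host reproduces (birth onto empty sites plus infection of healthy hosts) at rate at least~$\beta = 1 + \delta + b/2 > 1 + \delta$, where~$b = \lambda_2 \,(1 - 1/\lambda_1) - \delta > 0$ is exactly hypothesis~\eqref{eq:coexist}; hence the infected set dominates a supercritical long-range contact process (Lemma~\ref{lem:growth}), and the moving lemmas (Lemmas~\ref{lem:move} and~\ref{lem:target}) transport a controlled fraction of this density along paths of adjacent small boxes to re-center it and to reach the neighboring block. Nothing in your outline plays the role of these three lemmas.

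The second gap is your treatment of the unknown exterior. You argue that, because the infected set is not monotone in the parameters, no sandwich is available and you must ``control the boundary flux directly.'' This conflates two different monotonicities. What fails is monotonicity in~$\lambda_1$; what holds, and what the paper relies on, is attractiveness with respect to the configuration (Lemma~\ref{lem:attractive}) for processes sharing the same host marginal: suppressing all infections outside a spatial window, and instantly curing infected offspring sent outside it, yields the modified processes~$\bar \eta_t$ and~$\bar \xi_t$ (and the box process of Lemma~\ref{lem:box-process}), which are dominated by the true process and are measurable with respect to the graphical representation in a bounded space-time region. This is what makes every estimate uniform over the states outside the block, which is the property the percolation comparison genuinely requires (compare Lemmas~\ref{lem:invasion-1}--\ref{lem:percolation}, where the bounds hold ``regardless of the states'' on the boundaries). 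Direct control of the boundary flux cannot deliver such uniformity, because the flux depends on the arbitrary exterior configuration; and in any case the cells near the edge of your controlled region have interaction neighborhoods extending into uncontrolled territory, so their drifts do not match~\eqref{eq:mean-field} --- indeed, for spatially non-constant profiles the long-range limit is a nonlocal convolution equation rather than the ODE, so your Gronwall closure could only be run in the bulk, where the resulting shrinking of the controlled region again forces the missing invasion argument.
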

 For an illustration, see the right-hand side of Figure~\ref{fig:diagram} where the parameter region~\eqref{eq:coexist} in which the infection persists corresponds
 to the region above the solid curve.


\section{Monotonicity and attractiveness}
\label{sec:monotonicity}

\indent This section gives some preliminary results that will be useful later.
 To prove these results as well as the theorems stated in the introduction, it is convenient to think of the stacked contact process as being generated by a collection of
 independent Poisson processes, also called Harris' graphical representation~\cite{harris_1972}.
 More precisely, the process starting from any initial configuration can be constructed using the rules and Poisson processes described in Table~\ref{tab:harris}.
 To begin with, we use this graphical representation to prove that the stacked contact process is attractive.
 Here, by attractiveness, we mean that replacing initially some healthy hosts by infected hosts can only increase the set of infected hosts at later times.
 More precisely, having two configurations of the stacked contact process~$\xi^1$ and~$\xi^2$, we write~$\xi^1 \preceq \xi^2$ whenever
\begin{align*}
  \{x \in \Z^d : \xi^1 (x) \neq 0 \} & \ \subset \ \{x \in \Z^d : \xi^2 (x) \neq 0 \} \vspace*{4pt} \\
  \{x \in \Z^d : \xi^1 (x) = 2 \} & \ \subset \ \{x \in \Z^d : \xi^2 (x) = 2 \}.
\end{align*}
 Then, the process is attractive in the sense stated in the next lemma.
\begin{lemma} --
\label{lem:attractive}
 Let~$\xi_t^1$ and~$\xi_t^2$ be two copies of the stacked contact process constructed from the same collections of independent Poisson processes. Then,
 $$ \xi^1_t \ \preceq \ \xi^2_t \ \ \hbox{for all} \ \ t \geq 0 \quad \hbox{whenever} \quad \xi^1_0 \ \preceq \ \xi^2_0. $$
\end{lemma}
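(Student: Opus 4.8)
The plan is to lean entirely on the Harris graphical representation of Table~\ref{tab:harris}: both copies $\xi^1_t$ and $\xi^2_t$ are produced deterministically from one realization of the driving Poisson processes (death marks and recovery marks attached to sites, and birth/infection marks attached to ordered pairs of neighboring vertices), so the configurations are piecewise constant in $t$ and change only at the almost surely distinct arrival times of these processes. It therefore suffices to show that the relation $\xi^1 \preceq \xi^2$ is preserved across a single update. I would begin with the elementary but clarifying observation that $\xi^1 \preceq \xi^2$ is exactly the coordinatewise inequality $\xi^1(x) \leq \xi^2(x)$ for the total order $0 < 1 < 2$ on the single-site state space: the two inclusions $\{\xi^1 \neq 0\} \subset \{\xi^2 \neq 0\}$ and $\{\xi^1 = 2\} \subset \{\xi^2 = 2\}$ together forbid precisely the pairs $(\xi^1(x),\xi^2(x)) \in \{(1,0),(2,0),(2,1)\}$, which are exactly the pairs with $\xi^1(x) > \xi^2(x)$. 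Since each mark updates a single target vertex $x$ and leaves every other coordinate untouched, and since the order is coordinatewise, the two copies still agree off $x$ after the update, and I only have to check that $\xi^1(x) \leq \xi^2(x)$ continues to hold at $x$, using the pre-update inequalities at $x$ and at the relevant source vertex $y$.

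I would then run through the mark types. Death and recovery marks are applied identically to both copies and are immediate: a death sends both coordinates to $0$, while a recovery can only lower a $2$ to a $1$, and the single constraint $\xi^1(x)=2 \Rightarrow \xi^2(x)=2$ forces both copies to recover together, so every admissible pre-update pair stays ordered. A birth mark carrying a healthy offspring onto an empty target can only raise a $0$ to a $1$ and is equally routine. The substantive content is confined to the transitions that create infected hosts, namely the states entering $2$, which occur in two guises: an infected parent giving birth onto an empty neighbor ($0 \to 2$) and an infected host horizontally infecting a healthy neighbor ($1 \to 2$).

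The main obstacle is exactly the interaction between these two mechanisms. The delicate configuration is a target vertex $x$ fed by an infected source $y$, so $\xi^1(y)=\xi^2(y)=2$, that is empty in the lower copy but occupied by a healthy host in the upper copy, i.e. $(\xi^1(x),\xi^2(x))=(0,1)$: if the lower copy were to acquire an infected offspring while the upper copy retained its healthy host, the post-update pair $(2,1)$ would violate the order. To rule this out I would arrange the graphical representation so that both conversions to state $2$ issuing from an infected source are driven by a common family of marks on the ordered pairs $(y,x)$, with the rule that when such a mark attached to an infected $y$ arrives the target $x$ is set to $2$ whether it was empty or healthy. Under this coupling the lower copy's $0 \to 2$ at $x$ is necessarily accompanied by the upper copy's $1 \to 2$ (or $x$ is already infected above), so the pair moves to $(2,2)$ and the order survives; a short finite case check then confirms that every remaining admissible pre-update pair is preserved as well. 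The point that requires genuine care -- and where I expect the real work to lie -- is verifying that this common-mark construction is consistent with the prescribed marginal conversion rates $\lambda_1 f_2$ and $\lambda_2 f_2$, that is, that it is the correct reading of Table~\ref{tab:harris}; once the driving processes are pinned down so as to couple the birth of infected offspring with horizontal infection, the monotonicity of each individual update reduces to the finite verification above.
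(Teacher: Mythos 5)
Your reduction of $\preceq$ to the coordinatewise order $0<1<2$ and the single-update induction are both sound, and you have correctly isolated the one dangerous configuration: an infected source $y$ and a target pair $(\xi^1(x),\xi^2(x))=(0,1)$. But the repair you propose -- a common family of marks driving both the $0\to2$ births and the $1\to2$ infections -- cannot be made consistent with the marginal rates, and this is not a deferrable technicality. Per directed edge $(y,x)$ in Table~\ref{tab:harris}, conversions $0\to2$ must occur at rate $\lambda_1/N$ and conversions $1\to2$ at rate $\lambda_2/N$, so a shared family can drive both only at rate $\min(\lambda_1,\lambda_2)/N$; when $\lambda_1>\lambda_2$ the leftover birth-only marks, at rate $(\lambda_1-\lambda_2)/N$, recreate exactly the transition $(0,1)\to(2,1)$ you are trying to exclude. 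Worse, no coupling whatsoever can close this gap in that regime: take $\xi^1_0$ with only $y$ infected and $\xi^2_0$ with $y$ infected plus a healthy host at a neighboring $x$, so that $\xi^1_0\preceq\xi^2_0$ with strictly nested occupied sets. Then $P(\xi^1_h(x)=2)=(\lambda_1/N)\,h+o(h)$ while $P(\xi^2_h(x)=2)=(\lambda_2/N)\,h+o(h)$, and ordered marginals are a necessary consequence of any pointwise ordered coupling; for $\lambda_1>\lambda_2$ and $h$ small this is a contradiction. So the conclusion genuinely fails for initial configurations whose occupied sets are strictly nested, and no choice of graphical representation can save it.

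The ingredient you are missing is the one the paper leads with: the occupied-site process $\ind\{\xi_t(\cdot)\neq 0\}$ is a basic contact process, determined by the graphical representation and the initial occupied set alone. The paper proves (and later applies) the lemma for copies that start from the \emph{same} set of occupied vertices, with only the infected sets nested; then $\{x:\xi^1_t(x)\neq 0\}=\{x:\xi^2_t(x)\neq 0\}$ for all $t\geq 0$, which is~\eqref{eq:attractive-1}. This equality removes the pair $(0,1)$ from the state space of the coupled pair altogether: the only pairs ever encountered are $(0,0)$, $(1,1)$, $(1,2)$, $(2,2)$, and the \emph{unmodified} marks of Table~\ref{tab:harris} then preserve the order across deaths, recoveries, births (which are now suppressed or effective in both copies simultaneously, since the target is empty in one copy if and only if it is empty in the other) and infection arrows. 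With that observation in place, your single-update case check is precisely the paper's induction along the almost surely finite influence graph, and no synchronization trick is needed. In short: either strengthen the hypothesis to equal initial occupied sets, as the paper's proof implicitly does, or accept that the statement is false in the generality you attempted to prove it.
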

\begin{proof}
 To begin with, we recall that the process that keeps track of the occupied vertices is a basic contact process.
 Since in addition both processes are constructed from the same graphical representation and start from the same set of occupied vertices,
\begin{equation}
\label{eq:attractive-1}
 \{x \in \Z^d : \xi_t^1 (x) \neq 0 \} \ = \ \{x \in \Z^d : \xi_t^2 (x) \neq 0 \} \quad \hbox{for all} \quad t \geq 0.
\end{equation}
 In particular, we only need to prove that
\begin{equation}
\label{eq:attractive-2}
  \{x \in \Z^d : \xi_t^1 (x) = 2 \} \ \subset \ \{x \in \Z^d : \xi_t^2 (x) = 2 \} \quad \hbox{for all} \quad t \geq 0.
\end{equation}
 To prove~\eqref{eq:attractive-2}, we first define the influence graph of a space-time point~$(x, t)$.
 We will show that each of the updates that occurs along the influence graph preserves the desired property.
 To define this graph, we say that there is a path from~$(y, s)$ to~$(x, t)$ if there exist
 $$ x_1 = x, x_2, \ldots, x_n = y \in \Z^d \qquad \hbox{and} \qquad s = t_0 < t_1 < \cdots < t_n = t $$
 such that the following conditions hold:
\begin{itemize}
 \item For each~$j = 1, 2, \ldots, n$, there is no death mark along $\{x_j \} \times (t_{j - 1}, t_j)$. \vspace*{4pt}
 \item For each~$j = 1, 2, \ldots, n - 1$, either~$x_j \longrightarrow x_{j + 1}$ or~$x_j \dasharrow x_{j + 1}$ at time~$t_j$.
\end{itemize}
 Then, we define the {\bf influence graph} of space-time point~$(x, t)$ as
 $$ G (x, t) \ = \ \{(y, s) \in \Z^d \times \R_+ : \hbox{there is a path from $(y, s)$ to $(x, t)$} \}. $$
 Note that the state at~$(x, t)$ can be determined from the initial configuration and the structure of the influence graph.
 Also, to prove~\eqref{eq:attractive-2}, we will prove that
\begin{equation}
\label{eq:attractive-3}
  (\xi_s^1 (y) \geq \xi_s^2 (y) \ \hbox{for all~$y \in \Z^d$ such that} \ (y, s) \in G (x, t)) \quad \hbox{for all} \quad 0 \leq s \leq t. 
\end{equation}
 Standard arguments~\cite{durrett_1995} imply that, with probability one, there can only be a finite number of paths leading to a given space-time point
 therefore the influence graph is almost surely finite which, in turn, implies that the number of death and recovery marks in the influence graph and the number of birth
 arrows and infection arrows that connect space-time points in the influence graph are almost surely finite.
 In particular, they can be ordered chronologically so the result can be proved by checking that each of the successive events occurring along the influence graph going
 forward in time preserves the relationship to be proved for the space-time points belonging to the influence graph.
 By assumption, property~\eqref{eq:attractive-3} is true at time~$s = 0$.
 Assuming that~\eqref{eq:attractive-3} holds until time~$s-$ where~$s$ is the time of an update in the influence graph, we have the following cases. \vspace*{5pt} \\
{\bf Death} -- Assume first that there is a death mark~$\times$ at point~$(y, s)$.
 In this case, $(y, s-)$ is not in the influence graph but, regardless of the state at this space-time point, vertex~$y$ is empty at time~$s$ for both processes therefore
 the property to be proved is true at time~$s$. \vspace*{5pt} \\
{\bf Recovery} -- If there is a recovery mark~$\bullet$ at point~$(y, s)$ then~\eqref{eq:attractive-1} implies
 $$ \xi_s^1 (y) = \ind \,\{\xi_{s-}^1 (y) \neq 0 \} = \ind \,\{\xi_{s-}^2 (y) \neq 0 \} = \xi_s^2 (y) \quad \hbox{therefore} \quad \xi^1_s (y) \leq \xi^2_s (y). $$
{\bf Birth} --  If there is a birth arrow~$(y, s) \longrightarrow (z, s)$ then the configuration can only change if vertex~$z$ is empty (for both processes) in which case we have
 $$ \xi_s^1 (z) \ = \ \xi_{s-}^1 (y) \ \leq \ \xi_{s-}^2 (y) \ = \ \xi_s^2 (z) $$
 while the state at the other vertices remains unchanged. \vspace*{5pt} \\
{\bf Infection} -- If there is an infection arrow~$(y, s) \dashrightarrow (z, s)$ then the configuration can only change if both vertices~$y$ and~$z$ are occupied in which case we have
 $$ \xi_s^1 (z) \ = \ \max \,(\xi_{s-}^1 (y), \xi_{s-}^1 (z)) \ \leq \ \max \,(\xi_{s-}^2 (y), \xi_{s-}^2 (z)) \ = \ \xi_s^2 (z) $$
 while the state at the other vertices remains unchanged. \vspace*{5pt} \\
 In all four cases, property~\eqref{eq:attractive-3} is true at time~$s$ and obviously remains true until the time of the next update occurring in the influence graph so the
 result follows by induction.
\begin{table}[t]
\begin{center}
\begin{tabular}{ccp{240pt}}
\hline \noalign{\vspace*{2pt}}
 rate            & symbol                                   & effect on the stacked contact process \\ \noalign{\vspace*{1pt}} \hline \noalign{\vspace*{6pt}}
 1               & $\times$ at $x$ for all $x \in \Z^d$     & death at~$x$ when~$x$ is occupied \\ \noalign{\vspace*{3pt}}
 $\delta$        & $\bullet$ at $x$ for all $x \in \Z^d$    & recovery at~$x$ when~$x$ is infected \\ \noalign{\vspace*{3pt}}
 $\lambda_1 / N$ & $x \longrightarrow y$ for all $x \sim y$ & birth at vertex~$y$ when~$x$ is occupied and~$y$ is empty \\ \noalign{\vspace*{3pt}}
 $\lambda_2 / N$ & $x \dasharrow y$ for all $x \sim y$      & infection at vertex~$y$ when~$x$ is infected and~$y$ is in state~1 \\ \noalign{\vspace*{4pt}} \hline
\end{tabular}
\end{center}
\caption{\upshape{Graphical representation of the stacked contact process.
 The rates in the left column correspond to the different parameters of the independent Poisson processes, attached to either each vertex (first two rows) or
 each oriented edge connected two neighbors (last two rows).
 The number $N$ denotes the neighborhood size.}}
\label{tab:harris}
\end{table}
\end{proof} \\ \\
 Lemma~\ref{lem:attractive} will be used in the last section to prove survival of the infected hosts under the assumption of long range interactions.
 In the next lemma, we prove that the conclusion of Lemma~\ref{lem:attractive} remains true if we increase the infection parameter of the second process.
 This result shows that, as explained in the introduction, the birth and recovery parameters being fixed, there is at most one phase transition at a certain critical infection rate.
\begin{figure}[t]
\centering
\scalebox{0.40}{\input{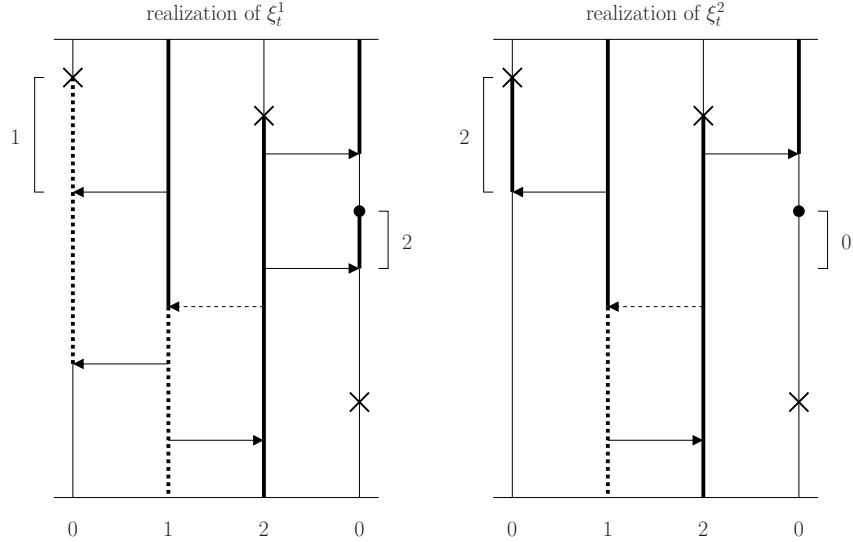}}
\caption{\upshape{Basic coupling of stacked contact processes with different birth rates.
 The solid thick lines represent space-time points occupied by an infected host while the dashed thick lines represent the space-time
 points occupied by a healthy host.
 The picture shows an example of realization for which the set of space-time point which are infected is not larger for the process with the larger birth rate.}}
\label{fig:coupling}
\end{figure}
\begin{lemma} --
\label{lem:monotone}
 Let~$\xi_t^1$ and~$\xi_t^2$ be two copies of the stacked contact process with the same birth and recovery rates but different infection rates: $\lambda_2^1 < \lambda_2^2$.
 Then, there is a coupling such that
 $$ \xi^1_t \ \preceq \ \xi^2_t \quad \hbox{whenever} \quad \xi^1_0 \ \preceq \ \xi^2_0. $$
\end{lemma}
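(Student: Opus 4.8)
The plan is to deduce this from the coupling used in Lemma~\ref{lem:attractive} by arranging that every infection arrow read by the slower process is also read by the faster one. To do this I would keep the death marks (rate~1), the recovery marks (rate~$\delta$) and the birth arrows (rate~$\lambda_1 / N$) of Table~\ref{tab:harris} common to both copies, and split the infection arrows as follows: on each oriented edge I attach a Poisson process of rate $\lambda_2^1 / N$ and an independent Poisson process of rate $(\lambda_2^2 - \lambda_2^1) / N$, the latter having positive rate because $\lambda_2^1 < \lambda_2^2$. The process $\xi^1_t$ is constructed reading only the infection arrows of the first Poisson process, whereas $\xi^2_t$ reads the infection arrows of both. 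By the superposition property of independent Poisson processes, $\xi^2_t$ then sees infection arrows at the total rate $\lambda_2^2 / N$, so each copy has the correct law, and by construction the infection arrows of $\xi^1_t$ form a subset of those of $\xi^2_t$.

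With this coupling fixed, I would rerun the argument of Lemma~\ref{lem:attractive} essentially verbatim. The occupied-vertex process ignores the infection arrows and is driven for both copies by the common birth arrows and death marks, so exactly as before the two copies share the same set of occupied sites and~\eqref{eq:attractive-1} holds; it therefore again suffices to prove the inclusion~\eqref{eq:attractive-2}, which I would obtain by establishing~\eqref{eq:attractive-3} along an influence graph. Here I would define the influence graph using the richer graphical representation of $\xi^2_t$, that is including both families of infection arrows, so that it controls the dependency structure of both copies and remains almost surely finite. Ordering the updates in this graph chronologically, the death, recovery and birth steps are treated exactly as in Lemma~\ref{lem:attractive}, the identity~\eqref{eq:attractive-1} guaranteeing that a target site is empty in one copy if and only if it is empty in the other.

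The one genuinely new step, which is where the inequality $\lambda_2^1 < \lambda_2^2$ is used and which I expect to be the only delicate point, is an infection arrow that is read by $\xi^2_t$ but not by $\xi^1_t$. Along such an arrow $\xi^1_t$ is unchanged while $\xi^2_t$ may convert a healthy host into an infected one; since the value at the target can only increase for $\xi^2_s$ and stays fixed for $\xi^1_s$, the relation $\xi^1_s \preceq \xi^2_s$ is preserved. For an infection arrow common to both copies I would reuse the identity $\xi^i_s(z) = \max(\xi^i_{s-}(y), \xi^i_{s-}(z))$, valid when $y$ and $z$ are occupied, which combined with~\eqref{eq:attractive-1} and the induction hypothesis yields the inequality as in the attractiveness proof. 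All the remaining cases being identical, the conclusion follows by induction on the successive updates occurring in the influence graph.
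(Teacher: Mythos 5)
Your proposal is correct and follows essentially the same route as the paper: the paper likewise constructs $\xi_t^2$ by superposing independent Poisson processes of infection arrows at rate $(\lambda_2^2 - \lambda_2^1)/\card N_x$ onto the graphical representation of $\xi_t^1$, works with the influence graph $G^2(x,t)$ of the richer representation, and reduces every case to the proof of Lemma~\ref{lem:attractive} except the extra infection arrows, which it handles exactly as you do (the state of $\xi^1$ is unchanged while that of $\xi^2$ can only increase at the target). No substantive difference to report.
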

\begin{proof}
 We construct the process~$\xi_t^2$ from the graphical representation obtained by adding infection arrows at the times of Poisson processes with
 parameter~$(\lambda_2^2 - \lambda_2^1) / \card N_x$ to the graphical representation used to construct the process~$\xi_t^1$.
 As previously, we only need to prove that, for any arbitrary space-time point~$(x, t)$, we have
\begin{equation}
\label{eq:monotone-1}
  (\xi_s^1 (y) \leq \xi_s^2 (y) \ \hbox{for all~$y \in \Z^d$ such that} \ (y, s) \in G^2 (x, t)) \quad \hbox{for all} \quad 0 \leq s \leq t
\end{equation}
 where~$G^2 (x, t)$ is the influence graph of the second process, which contains the influence graph of the first process.
 The same argument as in the proof of Lemma~\ref{lem:attractive} again implies that the result can be proved by checking that each of the successive events occurring along
 the influence graph of the second process preserves the relationship to be proved.
 Assume that~\eqref{eq:monotone-1} holds until time~$s-$ where~$s$ is the time of an update in the influence graph of the second process.
 In case there is an infection arrow~$(y, s) \dashrightarrow (z, s)$ in the graphical representation of the second process but not the first one, and assuming to avoid
 trivialities that~$y$ and~$z$ are occupied at time~$s-$,
 $$ \xi_s^1 (z) \ = \ \xi_{s-}^1 (z) \ \leq \ \xi_{s-}^2 (z) \ \leq \ \max \,(\xi_{s-}^2 (y), \xi_{s-}^2 (z)) \ = \ \xi_s^2 (z) $$
 while the state at the other vertices remains unchanged.
 In all the other cases, the same mark or arrow appears simultaneously in both graphical representation therefore the desired ordering is again preserved according
 to the proof of Lemma~\ref{lem:attractive}.
\end{proof} \\ \\
 To conclude this section, we note that the analysis of the mean-field model as well as numerical simulations of the stochastic spatial model also suggest some
 monotonicity of the survival probability of the infection with respect to the birth rate, i.e., for any fixed infection rate, recovery rate, and translation
 invariant initial distribution, the limiting probabilities
 $$ \begin{array}{l} \lim_{t \to \infty} \,P \,(\xi_t (x) = 2) \quad \hbox{for} \quad x \in \Z^d \end{array} $$
 are nondecreasing with respect to the birth rate~$\lambda_1$.
 This, however, cannot be proved using a basic coupling, i.e., constructing the process with the larger birth rate from the graphical representation obtained by
 adding birth arrows to the graphical representation used to construct the process with the smaller birth rate.
 Figure~\ref{fig:coupling} gives indeed an example of realization of this coupling for which the conclusion of the previous two lemmas does not hold.


\section{Proof of Theorem~\ref{th:coupling}}
\label{sec:coupling}

\indent The key idea to prove this theorem is to compare stacked contact processes with different birth and infection parameters.
 These processes are coupled through their graphical representation following the same approach as in Lemmas~\ref{lem:attractive} and~\ref{lem:monotone} though the coupling
 considered in this section is a little bit more meticulous.
 For a picture of the processes used for comparison in the next two lemmas, which respectively show the first and second parts of the theorem, we refer to Figure~\ref{fig:minmax}.
\begin{figure}[t]
\centering
\scalebox{0.40}{\input{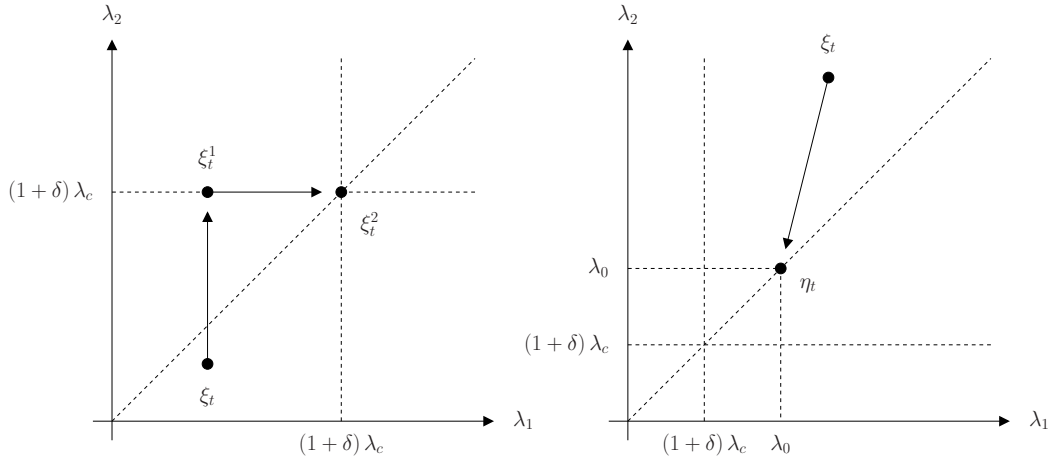}}
\caption{\upshape{Pictures related to the proof of Lemmas~\ref{lem:max}~and~\ref{lem:min}.}}
\label{fig:minmax}
\end{figure}
\begin{table}[t]
\begin{center}
\begin{tabular}{ccp{155pt}p{155pt}}
\hline \noalign{\vspace*{2pt}}
 rate                          & symbol                             & effect on the process~$\xi_t^1$
                                                                    & effect on the process~$\xi_t^2$ \\  \noalign{\vspace*{1pt}} \hline \noalign{\vspace*{6pt}}
 1                             & $\times$ at $y$ \ \                & death at~$y$ when~$y$ is occupied
                                                                    & death at~$y$ when~$y$ is occupied \\ \noalign{\vspace*{3pt}}
 $\delta$                      & $\bullet$ at $y$ \ \               & recovery at~$y$ when~$y$ is infected
                                                                    & recovery at~$y$ when~$y$ is infected \\ \noalign{\vspace*{3pt}}
 $\lambda_1 / N$               & $y \overset{1}{\longrightarrow} z$ & birth when~$y$ is occupied and~$z$ is empty and infection when~$y$ is infected and~$z$ is in state~1
                                                                    & birth when~$y$ is occupied and~$z$ is empty and infection when~$y$ is infected and~$z$ is in state~1 \\ \noalign{\vspace*{3pt}}
 $(\lambda_2 - \lambda_1) / N$ & $y \overset{2}{\longrightarrow} z$ & infection when~$y$ is infected and~$z$ is in state~1
                                                                    & birth when~$y$ is occupied and~$z$ is empty and infection when~$y$ is infected and~$z$ is in state~1 \\ \noalign{\vspace*{4pt}} \hline
\end{tabular}
\end{center}
\caption{\upshape{Coupling of the processes in the proof of Lemma~\ref{lem:max}.
 In the left column, $N$ is the neighborhood size.}}
\label{tab:max}
\end{table}
\begin{lemma} --
\label{lem:max}
 Assume that~$\max \,(\lambda_1, \lambda_2) \leq (1 + \delta) \,\lambda_c$. Then,
 $$ \begin{array}{l}
    \lim_{t \to \infty} \,P \,(\xi_t (x) = 2) = 0 \quad \hbox{for all} \quad x \in \Z^d. \end{array} $$
\end{lemma}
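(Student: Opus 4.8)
The plan is to dominate the set of infected hosts of the stacked contact process by the infected set of a process with equal birth and infection rates, which by the observation in the introduction is itself a basic contact process with death rate $1 + \delta$. Since the hypothesis $\max(\lambda_1, \lambda_2) \le (1 + \delta)\lambda_c$ makes this auxiliary contact process subcritical, its infected set dies out, and the domination then forces $P(\xi_t(x) = 2) \to 0$. I split the argument according to whether $\lambda_1 \ge \lambda_2$ or $\lambda_2 \ge \lambda_1$, so that in either case the comparison process uses the common rate $\max(\lambda_1, \lambda_2)$.

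Consider first $\lambda_1 \ge \lambda_2$. By Lemma~\ref{lem:monotone}, raising the infection rate can only enlarge the infected set, so the infected set of the process with parameters $(\lambda_1, \lambda_2, \delta)$ is contained in that of the process with parameters $(\lambda_1, \lambda_1, \delta)$. The latter has equal birth and infection rates $\lambda_1$, hence, as explained in the introduction, its infected set is a basic contact process with birth rate $\lambda_1$ and death rate $1 + \delta$. Because $\lambda_1 = \max(\lambda_1, \lambda_2) \le (1 + \delta)\lambda_c$, this process is at or below criticality and therefore dies out.

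Now suppose $\lambda_2 \ge \lambda_1$, which is the case handled by the coupling in Table~\ref{tab:max}. Let $\xi_t^1$ be the actual process with parameters $(\lambda_1, \lambda_2, \delta)$ and let $\xi_t^2$ be the process with equal rates $(\lambda_2, \lambda_2, \delta)$, both built from the common graphical representation in which the type~1 arrows, at rate $\lambda_1 / N$, act as births and infections in both processes, while the type~2 arrows, at rate $(\lambda_2 - \lambda_1)/N$, act only as infections in $\xi_t^1$ but as births and infections in $\xi_t^2$. Starting both from the all-infected configuration, I would prove by induction along the successive marks of the almost surely finite influence graph, exactly as in Lemma~\ref{lem:attractive}, that the invariant $\{z : \xi_t^1(z) = 2\} \subseteq \{z : \xi_t^2(z) = 2\}$ is preserved. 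The shared death and recovery marks only remove infected sites from both processes simultaneously, and the type~1 arrows behave identically in the two processes; the one delicate point is a type~2 arrow $y \to z$ that infects in $\xi_t^1$, so that $y$ is infected and $z$ is healthy there, whence $y$ is infected in $\xi_t^2$ by the inductive hypothesis. In $\xi_t^2$ this arrow births or infects from the infected parent $y$, so $z$ becomes infected in $\xi_t^2$ whether it was empty, healthy, or already infected, and the containment is maintained.

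Finally, since $\xi_t^2$ has equal birth and infection rates $\lambda_2$, its infected set is a basic contact process with birth rate $\lambda_2$ and death rate $1 + \delta$; as $\lambda_2 = \max(\lambda_1, \lambda_2) \le (1 + \delta)\lambda_c$ it is subcritical and dies out, so the containment yields $P(\xi_t^1(x) = 2) \le P(\xi_t^2(x) = 2) \to 0$. The main obstacle is the verification of the invariant for the type~2 arrows: the argument works only because an offspring inherits the type of its parent, so the extra births that the type~2 arrows produce in $\xi_t^2$ create infected rather than merely occupied hosts, precisely where $\xi_t^1$ would create an infected host by horizontal transmission. One must also confirm that no event can destroy an infected site of $\xi_t^2$ while leaving the matching site infected in $\xi_t^1$, which holds because the death and recovery marks are shared by the two processes.
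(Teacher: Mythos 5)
Your proposal is correct and takes essentially the same route as the paper: monotonicity in the infection rate (Lemma~\ref{lem:monotone}), the Table~\ref{tab:max} coupling in which the extra arrows act as infections only in the true process but as births-and-infections in the equal-rates process, and extinction of the dominating contact process (birth rate at most $(1+\delta)\,\lambda_c$, death rate $1+\delta$) by Bezuidenhout--Grimmett. The only differences are organizational: the paper avoids your case split by first using Lemma~\ref{lem:monotone} to raise the infection rate to exactly $(1+\delta)\,\lambda_c$ so that the comparison process is exactly critical, and it proves the stronger pointwise ordering $\xi^1_t (x) \leq \xi^2_t (x)$ where your (equally sufficient) invariant is containment of the infected sets.
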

\begin{proof}
 In view of the monotonicity with respect to~$\lambda_2$ established in~Lemma~\ref{lem:monotone}, it suffices to prove that the stacked contact process~$\xi^1_t$ with
 birth and infection rates
 $$ \begin{array}{rcl}
        \hbox{birth rate} & = & \lambda_1 \ \leq \ \lambda_2 \ = \ (1 + \delta) \,\lambda_c \vspace*{2pt} \\
    \hbox{infection rate} & = & \lambda_2 \ = \ (1 + \delta) \,\lambda_c \end{array} $$
 dies out starting from any initial configuration.
 To prove this result, we couple this process with the stacked contact process~$\xi^2_t$ that has the same birth and infection rates
 $$ \hbox{birth rate} \ = \ \hbox{infection rate} \ = \ \lambda_2 \ = \ (1 + \delta) \,\lambda_c $$
 in which the set of infected hosts evolves according to a critical contact process.
 Both processes have in addition the same death rate one and recovery rate~$\delta$.
 Basic properties of Poisson processes imply that both processes can be constructed on the same probability space using the graphical representation described in Table~\ref{tab:max}.
 The next step is to show that, for this coupling and when both processes start from the same initial configuration, we have
\begin{equation}
 \label{eq:max-1}
   \xi^1_t (x) \ \leq \ \xi^2_t (x) \quad \hbox{for all} \quad (x, t) \in \Z^d \times \R_+.
\end{equation}
 Following the proofs of Lemmas~\ref{lem:attractive} and~\ref{lem:monotone}, it suffices to check that each of the successive events occurring along the influence graph
 of a space-time point~$(x, t)$ going forward in time preserves the relationship to be proved whenever it is true at time zero.
 Assume that~\eqref{eq:max-1} holds until time~$s-$ where~$s$ is the time of an update in the influence graph.
 In case a death mark or a recovery mark occurs at that time, the proof of Lemma~\ref{lem:attractive} implies that the property to be proved remains true after the update, while in
 case an arrow occurs, we have the following alternative. \vspace*{5pt} \\
{\bf Type~1 arrow} -- If there is an arrow~$(y, s) \overset{1}{\longrightarrow} (z, s)$ then
 $$ \xi_s^1 (z) \ = \ \max \,(\xi_{s-}^1 (y), \xi_{s-}^1 (z)) \ \leq \ \max \,(\xi_{s-}^2 (y), \xi_{s-}^2 (z)) \ = \ \xi_s^2 (z) $$
 while the state at the other vertices remains unchanged. \vspace*{5pt} \\
{\bf Type~2 arrow} -- If there is an arrow~$(y, s) \overset{2}{\longrightarrow} (z, s)$ then
 $$ \xi_s^1 (z) \ \leq \ \max \,(\xi_{s-}^1 (y), \xi_{s-}^1 (z)) \ \leq \ \max \,(\xi_{s-}^2 (y), \xi_{s-}^2 (z)) \ = \ \xi_s^2 (z) $$
 while the state at the other vertices remains unchanged. \vspace*{5pt} \\
 In all cases, property~\eqref{eq:max-1} is true at time~$s$ and remains true until the time of the next update occurring in the influence graph, which proves~\eqref{eq:max-1}.
 To conclude, we recall that the set of infected hosts in the second process evolves according to a critical contact process, which is known to die out starting from any initial
 configuration~\cite{bezuidenhout_grimmett_1990}.
 This and~\eqref{eq:max-1} imply that
 $$ \begin{array}{rcl}
    \lim_{t \to \infty} \,P \,(\xi_t (x) = 2) & \leq & \lim_{t \to \infty} \,P \,(\xi_t^1 (x) = 2) \vspace*{4pt} \\
                                              & \leq & \lim_{t \to \infty} \,P \,(\xi_t^2 (x) = 2) \ = \ 0 \quad \hbox{for all} \quad x \in \Z^d \end{array} $$
 which completes the proof.
\end{proof}
\begin{table}[t]
\begin{center}
\begin{tabular}{ccp{190pt}p{120pt}}
\hline \noalign{\vspace*{2pt}}
 rate                          & symbol                             & effect on the process~$\xi_t$
                                                                    & effect on the process~$\eta_t$ \\  \noalign{\vspace*{1pt}} \hline \noalign{\vspace*{6pt}}
 1                             & $\times$ at $y$ \ \                & death at~$y$ when~$y$ is occupied
                                                                    & death at~$y$ when~$y$ is occupied \\ \noalign{\vspace*{3pt}}
 $\delta$                      & $\bullet$ at $y$ \ \               & recovery at~$y$ when~$y$ is infected
                                                                    & death at~$y$ when~$y$ is occupied \\ \noalign{\vspace*{3pt}}
 $\lambda_0 / N$               & $y \overset{0}{\longrightarrow} z$ & birth when~$y$ is occupied and~$z$ is empty and \newline infection when~$y$ is infected and~$z$ in state~1
                                                                    & birth when~$y$ is occupied and~$z$ is empty \\ \noalign{\vspace*{3pt}}
 $(\lambda_1 - \lambda_0) / N$ & $y \overset{1}{\longrightarrow} z$ & birth when~$y$ is occupied and~$z$ is empty
                                                                    & none \\ \noalign{\vspace*{3pt}}
 $(\lambda_2 - \lambda_0) / N$ & $y \overset{2}{\longrightarrow} z$ & infection when~$y$ is infected and~$z$ in state~1
                                                                    & none \\ \noalign{\vspace*{4pt}} \hline
\end{tabular}
\end{center}
\caption{\upshape{Coupling of the processes in the proof of Lemma~\ref{lem:min}.
 In the left column, $N$ is the neighborhood size.}}
\label{tab:min}
\end{table}
\begin{lemma} --
\label{lem:min}
 Assume that~$\min \,(\lambda_1, \lambda_2) > (1 + \delta) \,\lambda_c$. Then,
 $$ \begin{array}{l}
    \liminf_{t \to \infty} \,P \,(\xi_t (x) = 2) > 0 \quad \hbox{for all} \quad x \in \Z^d. \end{array} $$
\end{lemma}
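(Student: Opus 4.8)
The plan is to compare the stacked contact process~$\xi_t$ with an auxiliary basic contact process~$\eta_t$ whose occupied vertices are dominated by the infected vertices of~$\xi_t$, and then to invoke the fact that~$\eta_t$ is supercritical. First I fix a birth rate~$\lambda_0$ with
$$ (1 + \delta) \,\lambda_c \ < \ \lambda_0 \ \leq \ \min \,(\lambda_1, \lambda_2), $$
which is possible by hypothesis, and I construct~$\xi_t$ and~$\eta_t$ on the same probability space from the graphical representation in Table~\ref{tab:min}. With this coupling, $\eta_t$ is a basic contact process with birth rate~$\lambda_0$ and death rate~$1 + \delta$, since for~$\eta_t$ both the death marks (rate~1) and the recovery marks (rate~$\delta$) act as deaths. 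I start both processes from the configuration with only infected hosts, so~$\xi_0 \equiv 2$ and~$\eta_0 \equiv 1$.

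The key step is to establish the domination
$$ \{x \in \Z^d : \eta_t (x) = 1 \} \ \subseteq \ \{x \in \Z^d : \xi_t (x) = 2 \} \quad \hbox{for all} \quad t \geq 0, $$
which holds at~$t = 0$ by construction. Following the inductive scheme along the influence graph used in Lemma~\ref{lem:attractive}, I would check that each type of event preserves this inclusion. A death mark removes the affected vertex from both sets, and a recovery mark kills~$\eta$ at the vertex while only de-infecting~$\xi$, so the corresponding~$\eta$-particle disappears and no constraint is imposed. For a type~0 arrow~$(y, s) \overset{0}{\longrightarrow} (z, s)$, the only way the inclusion could fail is that it creates an~$\eta$-particle at~$z$; but this forces~$y$ to be~$\eta$-occupied, hence~$\xi$-infected by the induction hypothesis, so the same arrow in~$\xi_t$ performs either a birth from an infected parent (when~$z$ is empty) or an infection (when~$z$ is healthy), and in either sub-case~$\xi_s (z) = 2$. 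Finally, type~1 and type~2 arrows leave~$\eta_t$ unchanged and never decrease the state of~$\xi_t$, so any vertex with~$\eta_s (z) = 1$, where by induction~$\xi_{s-} (z) = 2$, keeps~$\xi_s (z) = 2$.

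Granting the domination, the conclusion is immediate: since~$\lambda_0 > (1 + \delta) \,\lambda_c$, the process~$\eta_t$ is a supercritical contact process, so starting from the full configuration we have~$\liminf_{t \to \infty} P \,(\eta_t (x) = 1) > 0$ by~\cite{bezuidenhout_grimmett_1990}. Combined with the inclusion this gives~$P \,(\xi_t (x) = 2) \geq P \,(\eta_t (x) = 1)$, and therefore~$\liminf_{t \to \infty} P \,(\xi_t (x) = 2) > 0$ for all~$x \in \Z^d$.

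The step requiring the most care is the verification for the type~0 arrow: one must exploit that the same arrow always produces a birth for~$\eta_t$ but produces either a birth or an infection for~$\xi_t$ depending on whether the target is empty or healthy, and then reconcile these sub-cases so that the infected vertices of~$\xi_t$ stay above the occupied vertices of~$\eta_t$. The remaining cases, as well as the reduction to the supercriticality of~$\eta_t$, are routine given Lemma~\ref{lem:attractive} and the standard survival results for the contact process.
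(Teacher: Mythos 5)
Your proposal is correct and follows essentially the same route as the paper: fix an intermediate rate~$\lambda_0$ strictly above~$(1 + \delta)\,\lambda_c$, couple~$\xi_t$ with a contact process~$\eta_t$ of birth rate~$\lambda_0$ and death rate~$1 + \delta$ via the graphical representation of Table~\ref{tab:min}, verify event-by-event along the influence graph that the~$\eta$-occupied sites stay dominated by the~$\xi$-infected sites, and conclude from supercriticality of~$\eta_t$. Your case analysis for the type~0 arrow (and the trivial handling of type~1 and type~2 arrows) matches the paper's argument, just written out in slightly more detail.
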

\begin{proof}
 First, we note that there exists~$\lambda_0$ such that
 $$ (1 + \delta) \,\lambda_c \ < \ \lambda_0 \ < \ \min \,(\lambda_1, \lambda_2). $$
 Then, we compare the stacked contact process~$\xi_t$ with the basic contact process~$\eta_t$ with birth parameter~$\lambda_0$ and death parameter~$1 + \delta$, and where it is assumed
 for convenience that occupied vertices are in state~2.
 Both processes can be constructed on the same probability space using the graphical representation described in Table~\ref{tab:min}, and we have
\begin{equation}
 \label{eq:min-1}
   \eta_t (x) \ \leq \ \xi_t (x) \quad \hbox{for all} \quad (x, t) \in \Z^d \times \R_+
\end{equation}
 provided this is satisfies at time zero.
 To prove~\eqref{eq:min-1}, it again suffices to check that each of the successive events occurring along the influence graph going forward in time preserves the relationship
 to be proved.
 The fact that death marks, recovery marks and type~0 arrows preserve this relationship follows from the same argument as in Lemma~\ref{lem:max}.
 For type~1 and type~2 arrows, we simply observe that the state at the tip of those arrows cannot decrease for the stacked contact process whereas they have no effect on the
 basic contact process.
 This shows that~\eqref{eq:min-1} holds.
 Finally, since~$\eta_t$ is a supercritical contact process, starting both processes with infinitely many vertices in state~2 and such that the ordering~\eqref{eq:min-1} is satisfied
 at time zero, we obtain
 $$ \begin{array}{l}
    \liminf_{t \to \infty} \,P \,(\xi_t (x) = 2) \ \geq \ \liminf_{t \to \infty} \,P \,(\eta_t (x) = 2) \ > \ 0 \quad \hbox{for all} \quad x \in \Z^d \end{array} $$
 which completes the proof.
\end{proof}


\section{Proof of Theorem~\ref{th:extinction} (extinction of the infection)}
\label{sec:extinction}

\begin{figure}[t]
\centering
\scalebox{0.40}{\input{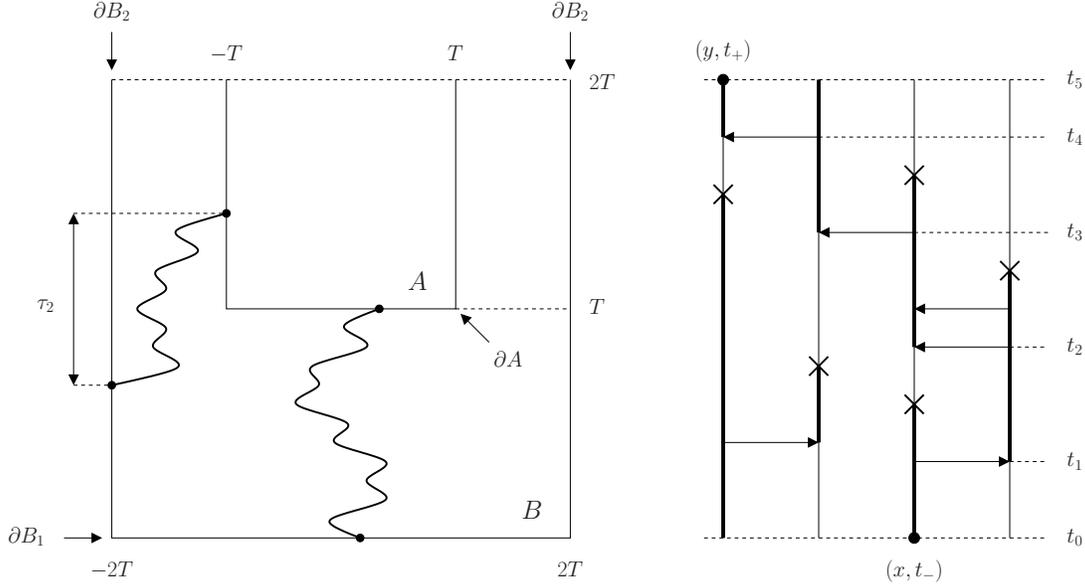}}
\caption{\upshape{Picture of the space-time regions~$A$ and~$B$ and example of an invasion path.}}
\label{fig:extinction}
\end{figure}

\indent Recall from Lemma~\ref{lem:monotone} that, the birth rate, death rate and recovery rate being fixed, the survival probability of the infection when
 starting from the configuration where all vertices are occupied by infected hosts is nondecreasing with respect to the infection parameter.
 This implies the existence of at most one phase transition between extinction of the infection and survival of the infection, and motivates the introduction of the critical value
 $$ \lambda_2^* \ = \ \lambda_2^* (\lambda_1, \delta) \ := \ \inf \,\{\lambda_2 \geq 0 : \hbox{the infection persists} \}. $$
 It directly follows from Lemma~\ref{lem:max} that
 $$ (1 + \delta) \,\lambda_c \ \leq \ \lambda_2^* (\lambda_1, \delta) \ \leq \ \infty \quad \hbox{when} \quad \lambda_1 \ \leq \ (1 + \delta) \,\lambda_c. $$
 To establish Theorem~\ref{th:extinction}, which states more generally that the critical infection rate is strictly positive for all possible values of the birth rate and the
 recovery rate, it suffices to show that, regardless of the initial configuration, the infection goes extinct whenever the infection rate is positive but sufficiently small.
 Referring to the left-hand side of Figure~\ref{fig:extinction}, the key to the proof is to show that, regardless of the state of the stacked contact process along the bottom
 and peripheral boundaries of the space-time box~$B$, the probability that the infection reaches the smaller box~$A$ is close to zero when boxes are large.
 From this, and covering the space-time universe with such boxes, we will deduce that the probability that a given infection path intersects $n$ boxes decreases exponentially
 with~$n$, which implies extinction of the infection.
 To make the argument precise, we first turn our picture into equations:
 let~$T$ be a large integer and consider the space-time boxes
 $$ A \ := \ [- T, T]^d \times [T, 2T] \quad \hbox{and} \quad B \ := \ [- 2T, 2T]^d \times [0, 2T]. $$
 We also define the bottom and peripheral boundaries of~$B$ as
 $$ \begin{array}{rcl}
    \partial B_1 & := & \{(x, t) \in B : t = 0 \} \vspace*{4pt} \\
    \partial B_2 & := & \{(x, t) \in B : \max_{j = 1, 2, \ldots, d} \,|x_j| = 2T \} \end{array} $$
 as well as the lower boundary of the smaller region~$A$ as
 $$ \begin{array}{l} \partial A \ := \ \{(x, t) \in A : t = T \ \hbox{or} \ \max_{j = 1, 2, \ldots, d} \,|x_j| = T \}. \end{array} $$
 In the next two lemmas, we collect upper bounds for the probability that the infection reaches the lower boundary of~$A$ starting from the bottom or peripheral boundary of~$B$.
 In the next lemma, we start with the process with infection rate zero, and extend the result to general infection rates in the subsequent lemma.
 To avoid cumbersome notations, we only prove these results in the presence of nearest neighbor interactions but our approach easily extends to any dispersal range.
\begin{lemma} --
\label{lem:invasion-1}
 For~$\lambda_2 = 0$ and regardless of the states in~$\partial B_1$ and~$\partial B_2$,
 $$ P \,(\xi_t (x) = 2 \ \hbox{for some space-time point} \ (x, t) \in A) \ \leq \ \exp (- a_1 T) $$
 for a suitable constant~$a_1 = a_1 (\lambda_1, \delta) > 0$.
\end{lemma}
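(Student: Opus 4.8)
The plan is to exploit the special structure created by setting $\lambda_2 = 0$: with no horizontal transmission, a host's type is only ever changed by a recovery ($2 \to 1$) or inherited, upon birth, from its parent, so there is no way to turn a healthy host infected except by a death followed by a birth from an infected neighbor. Consequently, for any space-time point $(x,t)$ the event $\{\xi_t(x) = 2\}$ is governed by a \emph{single} backward genealogical lineage. Tracing $x$ back along the birth arrows of the graphical representation of Table~\ref{tab:harris}, one obtains a unique path $(x,t) = (y_0, s_0) \leftarrow (y_1, s_1) \leftarrow \cdots$, where $s_{k+1}$ is the time at which the host occupying $y_k$ was born (from $y_{k+1}$) and the worldline carries no death mark on each segment $\{y_k\} \times (s_{k+1}, s_k)$. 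Going forward along the lineage, the carried type equals that of the source until the first recovery mark on the lineage, after which it is healthy forever. Hence $\xi_t(x) = 2$ if and only if the lineage reaches an infected source on $\partial B_1 \cup \partial B_2$ (which, in the worst case, we may assume happens) and carries no recovery mark. Since recovery marks form an independent Poisson process of rate $\delta$, conditionally on the lineage (measurable with respect to the birth and death marks alone) the probability of avoiding all of them equals $\exp(-\delta \ell)$, where $\ell$ is the temporal length of the lineage.

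Next I would split according to where the lineage exits $B$. If it crosses the bottom boundary $\partial B_1$, i.e.\ reaches time $0$, then its length is $\ell = t \geq T$, so pulling out the factor $\exp(-\delta t) \leq \exp(-\delta T)$ and using that the lineage hits at most one source gives $P((x,t)\text{ infected via } \partial B_1) \leq \exp(-\delta T)$. This temporal estimate is the heart of the lemma: it is the rigorous counterpart of the mean-field statement that, without horizontal transmission, vertical transmission merely preserves the infected fraction while recovery erodes it at rate~$\delta$.

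The spatial case is the delicate one. If the lineage exits through the peripheral boundary $\partial B_2$, it must realize a spatial displacement of at least $T$, since a point of $A$ has $\|x\|_\infty \leq T$ while $\partial B_2$ sits at $\|y\|_\infty = 2T$; here the recovery factor alone is insufficient, because the lineage could traverse this distance quickly. I would control this through the finite speed of propagation of the graphical representation: since all birth rates are bounded, there are $v = v(\lambda_1) < \infty$ and $c_1 > 0$ such that the influence graph of $(x,t)$ reaches spatial distance $T$ within backward-time $\tau$ with probability at most $\exp(-c_1 T)$ whenever $\tau \leq T/(2v)$. Combining the two mechanisms, a side-sourced lineage either has length $\ell \geq T/(2v)$, contributing at most $\exp(-\delta T/(2v))$ through recovery avoidance, or reaches distance $T$ in time at most $T/(2v)$, a light-cone large deviation of probability at most $\exp(-c_1 T)$; in either case the contribution is at most $\exp(-cT)$ for a suitable $c = c(\lambda_1,\delta) > 0$. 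Note this works uniformly in $\lambda_1$, even though the color-blind host process may be supercritical and spread at positive speed.

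Finally I would assemble the bound by a union bound. Summing the pointwise estimates over the at most $(2T+1)^d$ spatial sites of $A$, and handling the continuum of times $t \in [T,2T]$ by charging each infected appearance to the birth event that created it (whose rate is bounded by $\lambda_1$), produces only a polynomial prefactor in $T$, which is absorbed by the exponential. This yields $P(\xi_t(x) = 2 \text{ for some } (x,t) \in A) \leq \mathrm{poly}(T)\,(\exp(-\delta T) + \exp(-cT)) \leq \exp(-a_1 T)$ for $T$ large and any $a_1 < \min(\delta, c)$. The main obstacle is the peripheral/spatial case: one must rule out fast invasion from the sides without any subcriticality assumption on $\lambda_1$, and it is precisely the interplay between the finite speed of propagation and the recovery-avoidance factor $\exp(-\delta \ell)$ that makes this work; a secondary technical point is to make the single-lineage representation and the independence of the recovery marks fully rigorous, together with the continuum-in-time union bound.
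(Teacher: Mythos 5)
Your proposal is correct and is essentially the paper's own argument: your backward genealogical lineage is exactly the paper's (unique) invasion path, your recovery-avoidance factor $\exp(-\delta \ell)$ is the paper's exponential variable $Z$, and your light-cone dichotomy for lineages entering through $\partial B_2$ (either temporal length at least $T/(2v)$, or an exponentially unlikely fast traversal of distance $T$) is precisely the paper's Gamma large-deviation estimate $P(\tau_2 < T/2\lambda_1) \leq \exp(-a_3 T)$ paired with $P(Z > T/2\lambda_1)$. The only cosmetic difference is the bookkeeping in the union bound — you charge lineages to the birth events creating the occupants of $A$, while the paper counts invasion paths by their entry points on $\partial B_1$ and $\partial B_2$ (bounding the latter by a Poisson number of boundary-crossing arrows) — and in both cases the polynomial prefactor is absorbed by the exponentials.
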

\begin{proof}
 We write~$(x, t_-) \leadsto (y, t_+)$, and say that there is an {\bf invasion path} connecting both space-time points when there exist
 vertices and times
 $$ x_1 = x, x_2, \ldots, x_n = y \in \Z^d \qquad \hbox{and} \qquad t_- = t_0 < t_1 < \cdots < t_n = t_+ $$
 such that the following conditions hold:
\begin{itemize}
 \item For each~$j = 1, 2, \ldots, n$, we have
  $$ \begin{array}{l}
     \lim_{\,s \uparrow t_{j - 1}} \xi_s (x_j) = 0 \qquad \hbox{and} \qquad \xi_s (x_j) \neq 0 \ \ \hbox{for all} \ \ s \in [t_{j - 1}, t_j). \end{array} $$
 \item For each~$j = 1, 2, \ldots, n - 1$, there is a birth arrow~$x_j \to x_{j + 1}$ at time~$t_j$.
\end{itemize}
 We call the time increment~$t_+ - t_-$ the {\bf temporal length} of the invasion path.
 Note that if a space-time point is occupied then there must be an invasion path starting from time zero and leading to this point.
 In addition, this invasion path is unique.
 To prove the lemma, the first ingredient is to find upper bounds for the number of invasion paths that start at the bottom or peripheral boundary
 and intersect the smaller space-time region~$A$, as well as lower bounds for the temporal length of these invasion paths.
 The number of invasion paths, say~$X_1$, starting from~$\partial B_1$ is bounded by the number of vertices on this boundary, i.e.,
\begin{equation}
\label{eq:invasion-1}
  X_1 \ \leq \ (4T + 1)^d \quad \hbox{with probability one}
\end{equation}
 and the temporal length~$\tau_1$ of any of these paths satisfies
\begin{equation}
\label{eq:invasion-2}
  \tau_1 \ \geq \ T \quad \hbox{with probability one}.
\end{equation}
 Now, since the number~$X_2$ of invasion paths starting from~$\partial B_2$ is bounded by the number of birth arrows
 starting from this boundary and since birth arrows occur at rate~$\lambda_1$,
 $$ X_2 \ \preceq \ \bar X_2 \ := \ \poisson (2T \,(4T + 1)^{d - 1} \,\lambda_1). $$
 where~$\preceq$ means stochastically smaller than.
 In particular, standard large deviation estimates for the Poisson random variable give the following bound:
\begin{equation}
\label{eq:invasion-3}
  \begin{array}{l}
    P \,(X_2 > 4T \,(4T + 1)^{d - 1} \,\lambda_1) \vspace*{4pt} \\ \hspace*{40pt} \leq \
    P \,(\bar X_2 > 4T \,(4T + 1)^{d - 1} \,\lambda_1) \ = \ P \,(\bar X_2 > 2 \,E \,(\bar X_2)) \ \leq \ \exp (- a_2 T) \end{array}
\end{equation}
 for a suitable constant~$a_2 = a_2 (\lambda_1) > 0$.
 In addition, since invasion paths starting from the peripheral boundary must have at least~$T$ birth arrows to reach~$\partial A$,
 the temporal length~$\tau_2$ of any of these paths satisfies
 $$ \tau_2 \ \succeq \ \bar \tau_2 \ := \ \gammadist (T, \lambda_1) $$
 where~$\succeq$ means stochastically larger than.
 In particular, using again large deviation estimates but this time for the Gamma distribution, we deduce that
\begin{equation}
\label{eq:invasion-4}
  \begin{array}{rcl}
    P \,(\tau_2 < T / 2 \lambda_1) & \leq & P \,(\bar \tau_2 < T / 2 \lambda_1) \vspace*{4pt} \\
                                   &   =  & P \,(\bar \tau_2 < (1/2) \,E \,(\bar \tau_2)) \ \leq \ \exp (- a_3 T) \end{array}
\end{equation}
 for a suitable constant~$a_3 = a_3 (\lambda_1) > 0$.
 To deduce that, with probability close to one, none of the invasion paths can bring the infection into~$A$, we observe
 that recovery marks occur independently at each vertex at rate~$\delta$.
 This implies that the number of recovery marks along a given invasion path is a Poisson random variable with parameter~$\delta$ times
 the temporal length of this path.
 In particular, letting~$Z$ be the exponential random variable with rate~$\delta$ and using that the infection can reach the space-time
 region~$A$ only if there is at least one invasion path that does not cross any recovery mark, we deduce that
 $$ \begin{array}{l}
      P \,(\xi_t (x) = 2 \ \hbox{for some} \ (x, t) \in A) \vspace*{4pt} \\ \hspace*{20pt} \leq \
      P \,(\xi_t (x) = 2 \ \hbox{for some} \ (x, t) \in A \,| \,X_1 \leq (4T + 1)^d \ \hbox{and} \ X_2 \leq 4T \,(4T + 1)^{d - 1} \,\lambda_1) \vspace*{4pt} \\ \hspace*{50pt} + \
      P \,(X_1 > (4T + 1)^d) + P \,(X_2 > 4T \,(4T + 1)^{d - 1} \,\lambda_1) \vspace*{4pt} \\ \hspace*{20pt} \leq \
          (4T + 1)^d \ P \,(Z > T) + 4T \,(4T + 1)^{d - 1} \,\lambda_1 \ (P \,(\tau_2 < T / 2 \lambda_1) + P \,(Z > T / 2 \lambda_1)) \vspace*{4pt} \\ \hspace*{50pt} + \
      P \,(X_1 > (4T + 1)^d) + P \,(X_2 > 4T \,(4T + 1)^{d - 1} \,\lambda_1). \end{array} $$
 This and~\eqref{eq:invasion-1}--\eqref{eq:invasion-4} imply that, regardless of the states in~$\partial B_1$ and~$\partial B_2$,
 $$ \begin{array}{l}
      P \,(\xi_t (x) = 2 \ \hbox{for some} \ (x, t) \in A) \ \leq \
          (4T + 1)^d \ \exp (- \delta T) \vspace*{4pt} \\ \hspace*{20pt} + \
           4T \,(4T + 1)^{d - 1} \,\lambda_1 \ (\exp (- a_3 T) + \exp (- \delta T / 2 \lambda_1)) + \exp (- a_2 T) \ \leq \
         \exp (- a_1 T) \end{array} $$
 for a suitable constant~$a_1 = a_1 (\lambda_1, \delta)> 0$.
\end{proof}
\begin{lemma} --
\label{lem:invasion-2}
 For all~$\lambda_2 \geq 0$ and regardless of the states in~$\partial B_1$ and~$\partial B_2$,
 $$ P \,(\xi_t (x) = 2 \ \hbox{for some} \ (x, t) \in A) \ \leq \ \exp (- a_1 T) + 2T \,(4T + 1)^{d - 1} \,\lambda_2. $$
\end{lemma}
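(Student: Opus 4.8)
The plan is to bound the probability for infection rate $\lambda_2 > 0$ by its $\lambda_2 = 0$ counterpart, which is already controlled by Lemma~\ref{lem:invasion-1}, plus a correction that is linear in $\lambda_2$ and only polynomially large in $T$. First I would couple the process $\xi_t$ having infection rate $\lambda_2$ with the process $\xi_t^0$ having infection rate zero, exactly as in Lemma~\ref{lem:monotone}: both are built from the same death, recovery and birth Poisson processes, while $\xi_t$ uses in addition a family of horizontal infection arrows emitted from each vertex at total rate $\lambda_2$. With the same initial and boundary data, the argument of Lemmas~\ref{lem:attractive} and~\ref{lem:monotone} gives $\xi_t^0 \preceq \xi_t$, so every site infected in $\xi_t^0$ is infected in $\xi_t$, but not conversely.

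The heart of the proof is a dichotomy based on the infection genealogy of a space-time point. Tracing the infection at a point of $A$ backwards in time, each step is either a vertical transmission along a birth arrow from an infected parent, or a horizontal transmission along one of the extra arrows. I would split the event $\{\xi_t(x) = 2 \ \hbox{for some} \ (x,t) \in A\}$ according to whether this genealogy uses at least one horizontal infection arrow. If it uses none, then the infection reaches $A$ along a recovery-free occupancy invasion path issued from $\partial B_1 \cup \partial B_2$, which is precisely the mechanism available to $\xi_t^0$; since this event is measurable with respect to the death, recovery and birth marks alone and does not see the extra arrows, Lemma~\ref{lem:invasion-1} bounds its probability by $\exp(-a_1 T)$ verbatim.

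It remains to control the probability that the genealogy uses at least one horizontal arrow. Here I would pass to an expectation bound: this event is contained in the event that there exists an \emph{effective} horizontal arrow, namely an arrow $(y,s) \dashrightarrow (z,s)$ whose tip is occupied and connected forward by an occupancy invasion path to $A$. Since horizontal arrows fire at total rate $\lambda_2$ per vertex and are independent of the occupancy structure that governs forward connectivity to $A$, Markov's inequality bounds this probability by the expected number of effective arrows, which is at most $\lambda_2$ times the space-time measure of the sites that can feed the infection forward into $A$ within $B$. This is the same tube of sites that produced the peripheral count in Lemma~\ref{lem:invasion-1}, so the measure is again bounded by $2T(4T+1)^{d-1}$, yielding the stated correction term. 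Adding the two contributions gives the lemma.

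The step I expect to be the main obstacle is this last count. It must be uniform in the birth rate $\lambda_1$, since Theorem~\ref{th:extinction} asserts $\lambda_2^* > 0$ for every $\lambda_1$ and hence no supercriticality of the occupancy process may be exploited; and the self-referential feature that an arrow is effective only when its tail is already infected has to be dissolved by discarding the tail condition and retaining only the occupancy constraint on the tip before taking expectations. Obtaining a clean polynomial-in-$T$ bound, rather than one proportional to the full space-time volume of $B$, is exactly what forces the correction to vanish as $\lambda_2 \to 0$ with $T$ held fixed, which is all that the subsequent block argument for Theorem~\ref{th:extinction} will need.
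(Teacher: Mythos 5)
Your overall skeleton --- split according to whether the infection genealogy of a point of~$A$ uses a horizontal infection arrow, handle the arrow-free branch via Lemma~\ref{lem:invasion-1}, and bound the remaining branch by a first moment that is linear in~$\lambda_2$ --- is exactly the paper's strategy, and your arrow-free branch is sound: it is the same as the paper's step of conditioning on the event that no infection arrow points at~$B$, under which the process inside~$B$ evolves as in the case~$\lambda_2 = 0$ and Lemma~\ref{lem:invasion-1} applies verbatim.

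The gap is in your counting of the ``effective'' arrows, which is precisely the step you flagged as the main obstacle and which the paper avoids rather than solves. The paper takes~$X_3$ to be the number of \emph{all} infection arrows with tip in the space-time box~$B$, so that~$X_3$ is Poisson and independent of the birth arrows, death marks and recovery marks, and it simply bounds~$P \,(X_3 \neq 0) \leq E \,(X_3)$; no occupancy or connectivity information enters at all. You instead retain only the arrows whose tip is occupied and joined to~$A$ by a forward invasion path, and claim their expected number is at most~$\lambda_2 \cdot 2T \,(4T+1)^{d-1}$ because this is ``the same tube of sites that produced the peripheral count in Lemma~\ref{lem:invasion-1}.'' That identification is false. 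The factor~$2T \,(4T+1)^{d-1}$ in Lemma~\ref{lem:invasion-1} counts birth arrows emanating from the peripheral boundary~$\partial B_2$, a codimension-one spatial set which any path entering from outside must cross. The arrows relevant to your branch are subject to no crossing constraint: they can act anywhere in the bulk of~$B$. In particular, every occupied point of~$A$ itself is trivially connected forward to~$A$, so your effective region contains the occupied portion of~$A$; since Theorem~\ref{th:extinction} is asserted for every~$\lambda_1 > \lambda_c$ and the bound must hold for worst-case boundary states, in the supercritical regime this region has expected space-time measure of full volume order~$T \,(2T+1)^d$, not~$T \,(4T+1)^{d-1}$. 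The honest first moment obtainable from either your argument or the paper's is therefore~$\lambda_2 \cdot O (T \,(4T+1)^d)$. (The exponent~$d - 1$ in the paper's own displayed Poisson parameter, and hence in the statement of the lemma, has the same defect and appears to be a slip; it is inconsequential because Lemma~\ref{lem:percolation} only needs a bound linear in~$\lambda_2$ and polynomial in~$T$, so one just adjusts~$\lambda_2^* (T)$ in~\eqref{eq:percolation-1}.) The fix for your write-up is to abandon the refinement: take the bad event to be that \emph{some} infection arrow has its tip in~$B$, bound its probability by~$1 - \exp (- \mu) \leq \mu$ with~$\mu = 2T \,(4T+1)^d \,\lambda_2$, and accept the volume-order correction term, which is all the subsequent block argument requires.
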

\begin{proof}
 Let~$X_3$ be the number of infection arrows that point at~$B$. Then,
\begin{equation}
\label{eq:invasion-5}
  X_3 \ = \ \poisson (2T \,(4T + 1)^{d - 1} \,\lambda_2)
\end{equation}
 and is independent of the position of the birth arrows, death marks and recovery marks.
 In addition, given that there is no infection arrow that points at~$B$, like in the previous lemma, the infection can reach the space-time
 region~$A$ only if there is at least one invasion path that does not cross any recovery mark.
 In particular, it follows from~Lemma~\ref{lem:invasion-1} and~\eqref{eq:invasion-5} that
 $$ \begin{array}{l}
      P \,(\xi_t (x) = 2 \ \hbox{for some} \ (x, t) \in A) \vspace*{4pt} \\ \hspace*{25pt} \leq \
      P \,(\xi_t (x) = 2 \ \hbox{for some} \ (x, t) \in A \,| \,X_3 = 0) + P \,(X_3 \neq 0) \vspace*{4pt} \\ \hspace*{25pt} \leq \
           \exp (- a_1 T) + 1 - \exp (- 2T \,(4T + 1)^{d - 1} \,\lambda_2) \vspace*{4pt} \\ \hspace*{25pt} \leq \
           \exp (- a_1 T) + 2T \,(4T + 1)^{d - 1} \,\lambda_2. \end{array} $$
 This completes the proof.
\end{proof} \\ \\
 Next, we compare the process properly rescaled with oriented site percolation:
 we cover the space-time universe with translations of the boxes~$A$ and~$B$ by letting
 $$ \begin{array}{rclcrcl}
      A (z, n) & := & (2 T z, n T) + A &  & \partial A (z, n) & := & (2 T z, n T) + \partial A \vspace*{4pt} \\
      B (z, n) & := & (2 T z, n T) + B &  & \partial B_i (z, n) & := & (2 T z, n T) + \partial B_i \quad \hbox{for} \quad i = 1, 2,  \end{array} $$
 for each site~$(z, n) \in \Z^d \times \Z_+$ and declare~$(z, n)$ to be
\begin{equation}
\label{eq:percolation-0}
  \begin{array}{rcl}
    \hbox{{\bf infected}} & \hbox{when} & \xi_t (x) = 2 \ \ \hbox{for some} \ (x, t) \in A (z, n) \vspace*{4pt} \\
    \hbox{{\bf healthy}}  & \hbox{when} & \xi_t (x) \neq 2 \ \ \hbox{for all} \ (x, t) \in A (z, n). \end{array}
\end{equation}
 The next lemma is the key ingredient to couple the set of infected sites with a subcritical oriented site percolation process where paths can move horizontally
 in all spatial directions and vertically going up following the direction of time.
\begin{lemma} --
\label{lem:percolation}
 For all~$\ep > 0$, there exist~$T < \infty$ and~$\lambda_2^* = \lambda_2^* (T) > 0$ such that
 $$ P \,((z_i, n_i) \ \hbox{is infected for} \ i = 1, 2, \ldots, m) \ \leq \ \ep^m \quad \hbox{for all} \quad \lambda_2 \leq \lambda_2^* $$
 whenever~$|z_i - z_j| \vee |n_i - n_j| \geq 3$ for all~$i \neq j$.
\end{lemma}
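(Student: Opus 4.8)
The plan is to turn the one-box estimate of Lemma~\ref{lem:invasion-2} into the product bound by first tuning the parameters so that a single box is infected with probability at most~$\ep$, and then removing the dependence on the boundary data so that the events attached to well-separated boxes become genuinely independent. Throughout, write~$E_i$ for the event that~$(z_i, n_i)$ is infected, that is~$\xi_t (x) = 2$ for some~$(x, t) \in A (z_i, n_i)$, and set
$$ q \ := \ \exp (- a_1 T) + 2T \,(4T + 1)^{d - 1} \,\lambda_2. $$
Given~$\ep > 0$, I would first choose~$T$ so large that~$\exp (- a_1 T) \leq \ep / 2$, and then, $T$ being fixed, choose~$\lambda_2^* = \lambda_2^* (T) > 0$ so small that~$2T \,(4T + 1)^{d - 1} \,\lambda_2 \leq \ep / 2$ for all~$\lambda_2 \leq \lambda_2^*$, which gives~$q \leq \ep$. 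By Lemma~\ref{lem:invasion-2}, the probability that the infection reaches~$A (z, n)$ is then at most~$q$ for every site~$(z, n)$, and this bound holds regardless of the states along~$\partial B_1 (z, n)$ and~$\partial B_2 (z, n)$.

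The key reduction is to pass from~$E_i$ to an event that only depends on a neighbourhood of the single box~$B (z_i, n_i)$. For each~$i$, let~$\tilde E_i$ be the event that the infection reaches~$A (z_i, n_i)$ in the modified process obtained by forcing every space-time point of~$\partial B_1 (z_i, n_i) \cup \partial B_2 (z_i, n_i)$ to be occupied by an infected host and running the dynamics inside~$B (z_i, n_i)$ from the same graphical representation. By the monotonicity in the boundary configuration underlying Lemma~\ref{lem:attractive}, the all-infected boundary dominates the true boundary, so the set of infected space-time points inside~$B (z_i, n_i)$ for the original process is contained in the one for the modified process; hence~$E_i \subseteq \tilde E_i$. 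Since the uniform estimate above applies in particular to this maximal boundary data, we have~$P (\tilde E_i) \leq q \leq \ep$. The gain is that~$\tilde E_i$ is measurable with respect to the Poisson processes of the graphical representation that lie within~$B (z_i, n_i)$, together with the finitely many infection arrows whose head lies in~$B (z_i, n_i)$ — a region contained in a bounded neighbourhood of~$B (z_i, n_i)$ — and no longer depends on the configuration outside this region.

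I would then exploit the separation hypothesis. Each box is a product of intervals, so two boxes are disjoint as soon as they are disjoint in a single coordinate, and a short computation shows that the condition~$|z_i - z_j| \vee |n_i - n_j| \geq 3$ forces, in the coordinate realizing the maximum, a gap of size at least~$T$ between~$B (z_i, n_i)$ and~$B (z_j, n_j)$. Taking~$T$ larger than the interaction range, the neighbourhoods on which the events~$\tilde E_i$ depend are pairwise disjoint, and since the graphical representation is built from independent Poisson processes attached to the vertices and oriented edges, the events~$\tilde E_1, \ldots, \tilde E_m$ depend on disjoint families of these processes and are therefore mutually independent. Combining the three ingredients yields
$$ P \,((z_i, n_i) \ \hbox{is infected for} \ i = 1, \ldots, m) \ = \ P \Bigl( \bigcap_{i = 1}^m E_i \Bigr) \ \leq \ P \Bigl( \bigcap_{i = 1}^m \tilde E_i \Bigr) \ = \ \prod_{i = 1}^m P (\tilde E_i) \ \leq \ \ep^m, $$
which is the desired estimate.

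The main obstacle is the second step rather than the parameter tuning, which is routine once Lemma~\ref{lem:invasion-2} is in hand. One must argue carefully that replacing the true boundary data by the maximal one simultaneously dominates the event~$E_i$ and localizes it to a neighbourhood of a \emph{single} box; only after this localization does the separation hypothesis convert into genuine independence of the Poisson clocks, and only then does the uniform-over-boundaries nature of Lemma~\ref{lem:invasion-2} deliver the factor~$q \leq \ep$ for each box without any compounding of dependencies.
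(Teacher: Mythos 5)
Your parameter tuning and your final independence step both match the paper: the choices of~$T$ and~$\lambda_2^*$ are exactly those of~\eqref{eq:percolation-1}, and the observation that $3$-separated sites give pairwise disjoint boxes~$B (z_i, n_i)$ (with a gap of order~$T$, enough to also separate the finitely many arrows pointing into each box) is precisely how the paper concludes. The gap is in the step you yourself identify as the crux: the claim that forcing~$\partial B_1 \cup \partial B_2$ to be all infected produces a process whose infected set inside~$B$ pathwise dominates that of the true process, i.e., $E_i \subseteq \tilde E_i$. This monotonicity is false for the stacked contact process. Lemma~\ref{lem:attractive} does not give monotonicity in the occupied set: its proof relies on~\eqref{eq:attractive-1}, the fact that the two coupled processes have the \emph{same} occupied set at all times, and the paper itself stresses (after Lemma~\ref{lem:monotone} and in the discussion of Figure~\ref{fig:coupling}) that the infected set is not monotone under couplings that enlarge the occupied set. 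Your modified process does exactly that --- it adds occupied (infected) sites on the boundary --- and extra occupied sites can \emph{block} the spread of infection by births. Concretely, take~$\lambda_2 = 0$ and a realization in which the true process starts with a single infected host at~$x = T + 2$ and nothing else in the box; recovery marks hit every site of~$[-T-1, T+1]$ at time~$\approx 0.1$; birth arrows $T+2 \to T+1$ and then $T+1 \to T$ occur around times~$0.5$ and~$0.6$; and no other marks or arrows touch~$[-T-1, T+1]$ before time~$2T$. In the true process the infection is carried by births into empty sites and sits at~$x = T \in [-T, T]$ throughout~$[T, 2T]$, so~$E_i$ occurs. In your modified process the early recovery marks turn~$[-T-1, T+1]$ into a wall of \emph{healthy} hosts; with~$\lambda_2 = 0$ there are no infection arrows, births cannot land on occupied sites, so no site of~$[-T, T]$ is ever infected and~$\tilde E_i$ fails. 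This event has positive probability, so the inclusion~$E_i \subseteq \tilde E_i$ is wrong, and with it the bound~$P (E_i) \leq P (\tilde E_i) \leq \ep$ and the factorization.

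The repair needs no worst-case boundary process: the localization you are after is already built into Lemmas~\ref{lem:invasion-1} and~\ref{lem:invasion-2}. Their proofs bound the probability of an event defined purely in terms of the graphical representation inside~$B (z_i, n_i)$ --- namely, ``there is an invasion path from~$\partial B_1 \cup \partial B_2$ to~$A$ crossing no recovery mark, or some infection arrow points into~$B$'' --- and the event~$E_i$ is contained in this event \emph{regardless} of the states on the boundary; this containment is what ``regardless of the states in~$\partial B_1$ and~$\partial B_2$'' means there, and no monotonicity is invoked. Since these bounding events are measurable with respect to the Poisson processes attached to disjoint space-time regions when the sites are $3$-separated, they are mutually independent, and the product bound~$\ep^m$ follows exactly as in your last display with~$\tilde E_i$ replaced by these graphical events. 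So your skeleton is the paper's, but the domination mechanism must be replaced by this measurability argument.
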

\begin{proof}
 Recalling~$a_1$ from the previous lemmas, we fix
\begin{equation}
\label{eq:percolation-1}
  \begin{array}{rcl}
                    T \ = \ T (a_1) & := & - (1/a_1) \ln (\ep / 2) \ < \ \infty \vspace*{4pt} \\
  \lambda_2^* \ = \ \lambda_2^* (T) & := & \ep \ (4T \,(4T + 1)^{d - 1})^{-1} \ > \ 0. \end{array}
\end{equation}
 Since the graphical representation of the process is translation invariant in both space and time, it follows from Lemma~\ref{lem:invasion-2} that,
 for the specific values given in~\eqref{eq:percolation-1},
 $$ \begin{array}{l}
      P \,((z, n) \ \hbox{is infected}) \ = \ P \,(\xi_t (x) = 2 \ \hbox{for some} \ (x, t) \in A (z, n)) \vspace*{4pt} \\ \hspace*{150pt}
        \leq \ \exp (- a_1 T) + 2T \,(4T + 1)^{d - 1} \,\lambda_2 \ \leq \ \ep \end{array} $$
 for all~$\lambda_2 \leq \lambda_2^*$ and all~$(z, n) \in \Z^d \times \Z_+$.
 Since in addition this bound holds regardless of the states in the bottom boundary and peripheral boundary of~$B (z, n)$ and that
 $$ B (z, n) \,\cap \,B (z', n') \ = \ \varnothing \quad \hbox{whenever} \quad |z - z'| \vee |n - n'| \geq 3 $$
 the lemma follows.
\end{proof} \\ \\
 To complete the proof of the theorem, we now turn the set of sites~$\Z^d \times \Z_+$ into a directed graph by adding the following collection of oriented edges:
 $$ \begin{array}{rcl}
      (z, n) \to (z', n') & \hbox{if and only if} & |z - z'| \vee |n - n'| \geq 3 \ \ \hbox{and} \ \ n \leq n' \vspace*{4pt} \\
                          & \hbox{if and only if} & B (z, n) \,\cap \,B (z', n') = \varnothing \ \ \hbox{and} \ \ n \leq n' \end{array} $$
 and define a percolation process with parameter~$\ep$ by assuming that
 $$ P \,((z_i, n_i) \ \hbox{is open for} \ i = 1, 2, \ldots, m) \ = \ \ep^m $$
 whenever~$|z_i - z_j| \vee |n_i - n_j| \geq 3$ for all~$i \neq j$.
 Then, there exists a critical value~$\ep_c > 0$ that only depends on the spatial dimension such that, for all parameters~$\ep < \ep_c$,
 the set of open sites does not percolate.
 See~\cite[section 4]{vandenberg_grimmett_schinazi_1998} for more details.
 In particular, calling {\bf wet} site a site that can be reached by a directed path of open sites starting at level~$n = 0$, we have
\begin{equation}
\label{eq:percolation-2}
 \begin{array}{l} \lim_{n \to \infty} \,P \,((z, n) \ \hbox{is wet}) \ = \ 0 \quad \hbox{for all} \quad z \in \Z^d \quad \hbox{when} \quad \ep < \ep_c. \end{array}
\end{equation}
 To deduce extinction of the infection, we fix~$\ep \in (0, \ep_c)$, and let~$T$ and~$\lambda_2^*$ be defined as in~\eqref{eq:percolation-1}
 for this specific value of~$\ep$.
 Then, it follows from Lemma~\ref{lem:percolation} that the interacting particle system and the percolation process can be coupled in such a way
 that the set of open sites dominates the set of infected sites provided the infection rate is less than~$\lambda_2^*$.
 Since in addition the infection cannot appear spontaneously, we have for this coupling
\begin{equation}
\label{eq:percolation-3}
  \{(z, n) : \xi_t (x) = 2 \ \hbox{for some} \ (x, t) \in A (z, n) \} \ \subset \ \{(z, n) : (z, n) \ \hbox{is wet} \}.
\end{equation}
 Combining~\eqref{eq:percolation-2}--\eqref{eq:percolation-3}, we conclude that for all~$x \in 2 T z + [- T, T]^d$
 $$ \begin{array}{l} \lim_{t \to \infty} \,P \,(\xi_t (x) = 2) \ \leq \ \lim_{n \to \infty} \,P \,((z, n) \ \hbox{is wet}) \ = \ 0. \end{array} $$
 This completes the proof of Theorem~\ref{th:extinction}.


\section{Proof of Theorem~\ref{th:survival} (survival of the infection)}
\label{sec:survival}

\begin{figure}[t]
\centering
\scalebox{0.40}{\input{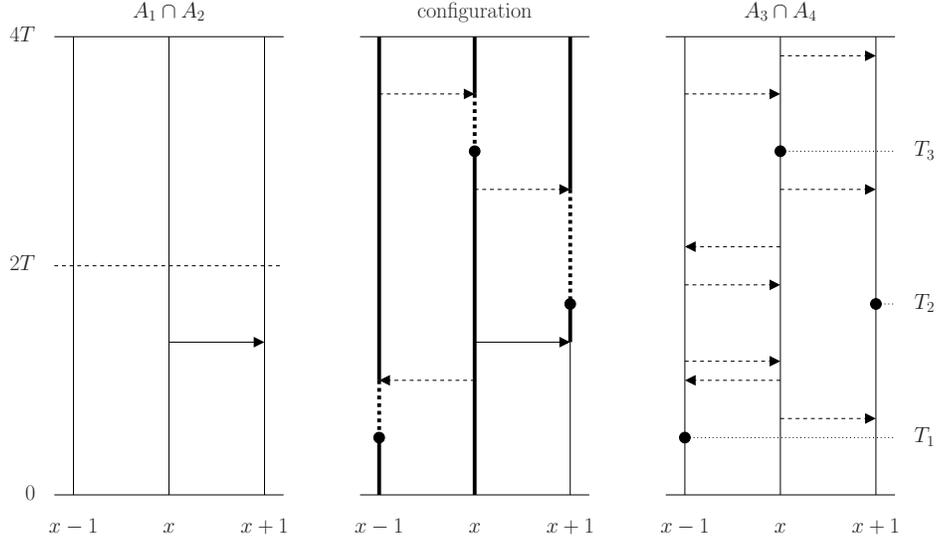}}
\caption{\upshape{Illustration of the event~$A_1 \cap A_2$ on the left and~$A_3 \cap A_4$ on the right.
 The picture at the center shows the configuration resulting from these events.
 Only the infection arrows that have an effect on the configuration are represented.
 The solid thick lines represent space-time points occupied by an infected host while the dashed thick lines represent the space-time
 points occupied by a healthy host.}}
\label{fig:survival}
\end{figure}
\indent In this section, we now study whether the critical infection rate~$\lambda_2^*$ is finite, meaning that the infection survives when the infection rate is sufficiently
 large, or infinite, meaning that the infection dies out for all infection rates, depending on the value of the birth and recovery rates.
 When the birth rate is subcritical, the host population dies out so the infection dies out as well, showing that the critical infection rate in this case is infinite.
 As pointed out in the introduction, we conjecture that the infection again dies out when the birth rate is barely supercritical due to the fact that the host population is
 too sparse to allow the infection to spread.
 Theorem~\ref{th:survival} states however that there exists a universal critical value~$\lambda_1^*$ such that
 $$ \lambda_2^* (\lambda_1, \delta) \ < \ \infty \quad \hbox{for all} \quad \lambda_1 \ > \ \lambda_1^* \quad \hbox{and} \quad \delta \ \geq \ 0. $$
 To prove this result, the first step is to take the birth rate large enough to ensure that the host population expands rapidly in order to provide some habitat for the infection.
 Then, we will show that, even when the recovery rate is large, the infection can invade this linearly growing set of hosts provided the infection rate is sufficiently large.
 These two steps are proved in the next two lemmas respectively.
 Note that the result is trivial when the critical value~$\lambda_1^*$ can be chosen depending on the recovery rate since Lemma~\ref{lem:min} directly implies that
 $$ \lambda_2^* (\lambda_1, \delta) \ \leq \ (1 + \delta) \,\lambda_c \ < \ \infty \quad \hbox{when} \quad \lambda_1 \ > \ (1 + \delta) \,\lambda_c $$
 In particular, an important component of the proof is the fact that~$\lambda_1^*$ will be ultimately a quantity that depends on the critical value of a certain oriented site
 percolation process but not on the recovery rate.
 Like in the previous section, we focus on the process with nearest neighbor interactions to avoid cumbersome notations but our approach easily extends to any dispersal range.
 We first establish the result in one dimension and will explain at the end of this section how to deduce the analog in higher dimensions.
 To state our next two lemmas, for all~$\ep > 0$, we let
\begin{equation}
\label{eq:parameter-invade}
 \begin{array}{rcl}
                      T \ = \ T (\ep) & := & - (1/12) \,\ln (1 - \ep / 4) \ > \ 0 \vspace*{4pt} \\
  \lambda_1^* \ = \ \lambda_1^* (\ep) & := & - (1/ T) \,\ln (\ep / 4) \ < \ \infty. \end{array}
\end{equation}
 For this specific time and this specific value of the birth rate, we have the following two lemmas that look respectively at the set of occupied and the
 set of infected vertices.
\begin{lemma} --
\label{lem:invade}
 For all~$\ep > 0$ and~$\lambda_1 > \lambda_1^*$,
 $$ \begin{array}{l}
      P \,(\xi_t (x - 1) \,\xi_t (x) \,\xi_t (x + 1) \neq 0 \ \hbox{for all} \ t \in (2T, 4T] \,| \,\xi_0 (x - 1) \,\xi_0 (x) \neq 0) > 1 - \ep / 2. \end{array} $$
\end{lemma}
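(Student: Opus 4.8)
The plan is to reduce everything to the occupation contact process and then exhibit a single ``good'' event, built from a handful of independent Poisson clocks, that forces the three sites to stay occupied on the whole window~$(2T,4T]$ and whose probability is bounded below by the calibrated constants in~\eqref{eq:parameter-invade}.

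First I would observe that the indicator $\{\xi_t(x-1)\,\xi_t(x)\,\xi_t(x+1)\neq 0\}$ depends only on the set of occupied vertices, and that this set evolves as a basic contact process with birth rate~$\lambda_1$ and death rate one, independently of the types of the hosts. Consequently, in the graphical representation I may ignore the recovery marks and the infection arrows entirely and argue using only the death marks~$\times$ and the birth arrows~$\longrightarrow$. Recall also that, for the occupation process, an occupied vertex stays occupied until its next death mark, while an empty vertex becomes occupied at the first birth arrow pointing at it whose tail is occupied.

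Next I would define the sufficient event $G = A_1\cap A_2\cap A_3\cap A_4$, where $A_1$, $A_2$, $A_4$ are the events that there is no death mark at~$x-1$, at~$x$, and at~$x+1$, respectively, during~$[0,4T]$, and $A_3$ is the event that at least one birth arrow $x\longrightarrow x+1$ occurs during~$[0,2T]$. I would then check that $G$ implies the desired configuration. Under $A_1\cap A_2$ the initially occupied vertices $x-1$ and $x$ never die, hence remain occupied throughout~$[0,4T]$. Under $A_2\cap A_3\cap A_4$, vertex~$x$ is occupied at the time $s\le 2T$ of the birth arrow, so $x+1$ is occupied immediately after~$s$ (whether or not it was already occupied), and the absence of death marks in $A_4$ keeps it occupied up to time~$4T$; since $s\le 2T$, this yields occupation of $x+1$ on $[2T,4T]\supseteq(2T,4T]$. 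Hence all three sites are occupied for every $t\in(2T,4T]$, as required.

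Finally I would bound $P(G)$ from below. The events $A_1$, $A_2$, $A_4$ concern death marks at three distinct vertices and $A_3$ concerns a disjoint family of birth arrows, so the four events are mutually independent. Each no-death event has probability $e^{-4T}$, while $A_3$ has probability $1-e^{-\lambda_1 T}$, since the arrows $x\longrightarrow x+1$ run at rate $\lambda_1/2$ over a window of length~$2T$. Using $e^{-12T}=1-\ep/4$ and $e^{-\lambda_1^* T}=\ep/4$ from~\eqref{eq:parameter-invade} together with $\lambda_1>\lambda_1^*$, I obtain
$$ P(G) \ = \ e^{-12T}\,(1-e^{-\lambda_1 T}) \ > \ (1-\ep/4)(1-\ep/4) \ > \ 1-\ep/2, $$
which is exactly the claimed bound. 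The only delicate points, rather than any genuine obstacle, are to confirm that $G$ really forces occupation on the half-open interval~$(2T,4T]$ (in particular that the birth lands no later than~$2T$ and that vertex~$x$ is occupied when it does) and that the clocks involved are independent so the product bound applies; both follow directly from the graphical construction and the calibration of $T$ and $\lambda_1^*$.
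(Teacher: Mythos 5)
Your proposal is correct and follows essentially the same argument as the paper: the paper's proof uses the event that no death marks occur at $x-1$, $x$, $x+1$ by time~$4T$ (your $A_1\cap A_2\cap A_4$, merged into a single Poisson count of rate~$12T$) together with the event that a birth arrow $x\to x+1$ occurs by time~$2T$ (your $A_3$, at rate $\lambda_1/2$ over a window of length~$2T$), then multiplies the probabilities $\exp(-12T)$ and $1-\exp(-\lambda_1 T)$ by independence and invokes the calibration~\eqref{eq:parameter-invade} to get $(1-\ep/4)^2 > 1-\ep/2$. Your write-up merely makes explicit two points the paper leaves implicit, namely the reduction to the occupation contact process and the verification that the good event forces occupation of all three sites on $(2T,4T]$.
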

\begin{proof}
 To begin with, we note that, given the conditioning, the event in the statement of the lemma occurs whenever the two events
 $$ \begin{array}{rcl}
     A_1 & := & \hbox{there are no death marks at vertices~$x - 1$ and~$x$ and~$x + 1$ by time~$4T$} \vspace*{2pt} \\
     A_2 & := & \hbox{there is a birth arrow $x \to x + 1$ by time~$2T$} \end{array} $$
 occur. Now, since death marks occur at each vertex at rate one,
\begin{equation}
\label{eq:invade-1}
  P \,(A_1) \ = \ P \,(\poisson (12T) = 0) \ = \ \exp (- 12T)
\end{equation}
 while, since birth arrows~$x \to x + 1$ occur at rate~$\lambda_1 / 2$,
\begin{equation}
\label{eq:invade-2}
  P \,(A_2) \ = \ P \,(\poisson (\lambda_1 T) \neq 0) \ = \ 1 - \exp (- \lambda_1 T).
\end{equation}
 Recalling~\eqref{eq:parameter-invade}, combining~\eqref{eq:invade-1}--\eqref{eq:invade-2} and using independence, we obtain
\begin{equation}
\label{eq:invade-3}
  \begin{array}{l}
    P \,(\xi_t (x - 1) \,\xi_t (x) \,\xi_t (x + 1) \neq 0 \ \hbox{for all} \ t \in (2T, 4T] \,| \,\xi_0 (x - 1) \,\xi_0 (x) \neq 0) \vspace*{4pt} \\ \hspace*{25pt} \geq \
    P \,(A_1 \cap A_2) \ = \ P \,(A_1) \,P \,(A_2) \ = \ \exp (- 12T) \,(1 - \exp (- \lambda_1 T)) \vspace*{4pt} \\ \hspace*{25pt} \geq \
      \exp (- 12T) \,(1 - \exp (- \lambda_1^* T)) \ = \ (1 - \ep / 4)^2 \ > \ 1 - \ep / 2 \end{array}
\end{equation}
 for all~$\lambda_1 > \lambda_1^*$.
 This completes the proof.
\end{proof}
\begin{lemma} --
\label{lem:infect}
 For all~$\ep > 0$ and~$\delta \geq 0$, there exists~$\lambda_2^* < \infty$ such that
 $$ P \,(\xi_{4T} (x) = \xi_{4T} (x + 1) = 2 \,| \,\xi_0 (x - 1) = \xi_0 (x) = 2) > 1 - \ep $$
 for all~$\lambda_1 > \lambda_1^*$ and~$\lambda_2 > \lambda_2^*$.
\end{lemma}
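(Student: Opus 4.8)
The plan is to reduce everything to the infection dynamics on the occupied cluster that Lemma~\ref{lem:invade} already guarantees, and then to show that a large infection rate forces this cluster to be \emph{completely} infected at the terminal time~$4T$. Since $\xi_0 (x - 1) = \xi_0 (x) = 2$ implies $\xi_0 (x - 1) \,\xi_0 (x) \neq 0$, Lemma~\ref{lem:invade} applies; its proof provides an event~$\Omega$, measurable with respect to the death marks and birth arrows alone, with $P (\Omega) > 1 - \ep / 2$, on which the three vertices $x - 1$, $x$, $x + 1$ are all occupied throughout~$(2T, 4T]$ while $x - 1$ and $x$ remain occupied throughout~$[0, 4T]$. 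On~$\Omega$ the occupied set contains the connected path $\{x - 1, x, x + 1\}$ during~$(2T, 4T]$, and this is the habitat in which I would track the infection.

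Working on~$\Omega$, I would condition on the entire realization of the death marks, recovery marks, and birth arrows, leaving only the infection arrows random; these form an independent Poisson family with rate~$\lambda_2 / 2$ along each oriented edge and are unaffected by the conditioning. On the occupied path the number of infected vertices is then a pure jump process whose upward jumps (infections) occur at rate at least~$\lambda_2 / 2$ whenever at least one but not all cluster vertices are infected, because a connected path always contains an infected--healthy adjacent pair, and whose downward jumps (recoveries) occur at total rate at most~$3 \delta$. I would dominate this count from below by a birth--death chain on~$\{0, 1, 2, 3\}$ with up-rate~$\lambda_2 / 2$ and down-rate~$3 \delta$, the state~$0$ being absorbing and corresponding to permanent loss of the infection. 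Starting from the two initially infected vertices, the probability that this chain ever reaches~$0$ before time~$4T$ is of order~$\delta^2 T / \lambda_2$: each visit to the state ``one infected vertex'' leads to extinction with probability at most~$\delta / (\delta + \lambda_2 / 2)$, and the expected number of such visits is bounded by the expected number of recoveries, which is of order~$\delta T$.

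It remains to see that, conditionally on non-extinction, both~$x$ and~$x + 1$ are in fact infected at the single deterministic time~$4T$. Here I would use that a healthy vertex with an infected neighbour is re-infected at rate at least~$\lambda_2 / 2$, whereas it turns healthy only at rate~$\delta$; hence at any fixed time the probability that a given cluster vertex fails to be infected while the infection is still alive is of order~$\delta / \lambda_2$, and so the probability that the path is not entirely infected at time~$4T$ is of the same order. This estimate also absorbs the brief initial period before~$x + 1$ is colonised: the birth onto~$x + 1$ comes from the (typically infected) vertex~$x$, which merely adds an infected site, and in the rare event that $x$ is healthy at that instant the new healthy site is re-infected almost immediately. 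Combining the two bounds, for each fixed~$\delta$ one can choose $\lambda_2^* = \lambda_2^* (\ep, \delta, T) < \infty$ so large that, on~$\Omega$, the conditional probability of $\xi_{4T} (x) = \xi_{4T} (x + 1) = 2$ exceeds~$1 - \ep / 2$ for all~$\lambda_2 > \lambda_2^*$; together with $P (\Omega) > 1 - \ep / 2$ this gives the claim, since $(1 - \ep / 2)^2 > 1 - \ep$.

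The main obstacle is exactly the third step carried out uniformly in a possibly large recovery rate~$\delta$: because $\delta$ is fixed but arbitrary, the infection can be erased very quickly, and the whole point is to quantify that a sufficiently large~$\lambda_2$ overwhelms recovery. The two delicate quantities are the probability that the infected count ever collapses to zero---after which the infection is lost for good, as it cannot reappear spontaneously---and the probability that a transient healthy hole at~$x$ or~$x + 1$ happens to sit exactly at the endpoint~$4T$. Both are governed by the single small parameter~$\delta / \lambda_2$, which is why $\lambda_2^*$ is allowed to depend on~$\delta$ while, crucially, the threshold~$\lambda_1^*$ fixed in~\eqref{eq:parameter-invade} does not.
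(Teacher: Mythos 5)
Your strategy is sound and shares its outer skeleton with the paper's proof: both arguments run on the event $A_1 \cap A_2$ supplied by the proof of Lemma~\ref{lem:invade} (no death marks at $x-1$, $x$, $x+1$ up to time~$4T$ and an early birth arrow $x \to x+1$), which costs a factor $1 - \ep/2$, and both then show that a large infection rate overwhelms the recovery marks on this three-vertex habitat, with $\lambda_2^*$ depending on $(\ep, \delta)$ while the threshold $\lambda_1^*$ from~\eqref{eq:parameter-invade} does not. The middle of the argument, however, is genuinely different. The paper first truncates the number~$N$ of recovery marks at a fixed~$n$ using $P(\poisson(3\delta T) > n) < \ep/8$, and then requires, through the explicit events $A_3$ and $A_4$, that between every two consecutive recovery marks the three infection arrows $x-1 \to x$, $x \to x-1$, $x \to x+1$ all occur, and that after time $\max(2T, T_N)$ an arrow $x-1 \to x$ is followed by $x \to x+1$; this forces full re-infection after every single recovery and keeps the whole proof at the level of elementary computations with independent exponentials, yielding bounds like $(\lambda_2/(\lambda_2 + 2\delta))^{3n}$. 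You demand much less of the infection --- only that the infected count never hits zero --- via domination from below by a birth--death chain, and you then patch the terminal time with a quasi-stationary estimate. Your decomposition isolates more transparently why everything is governed by the single ratio $\delta/\lambda_2$, and it produces the quantitative extinction bound $O(\delta^2 T/\lambda_2)$; the paper's version is cruder but needs no coupling or renewal-type argument.

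Two points in your write-up need repair, though neither is fatal. First, you condition on ``the entire realization of the death marks, recovery marks, and birth arrows'' and then treat recoveries as downward jumps occurring ``at total rate at most $3\delta$''; these are incompatible, since recovery marks that have been conditioned on occur at deterministic times. You should condition only on the death marks and birth arrows (which is all that your event $\Omega$ involves), leaving both the recovery marks and the infection arrows with their Poisson laws; the birth--death comparison then makes sense as stated. Second, your terminal estimate rests on the claim that a healthy cluster vertex always sees an infected neighbour when the infection is alive. This is true for $x$, whose cluster neighbours are $x-1$ and $x+1$, but not for $x+1$, whose only guaranteed occupied neighbour is $x$: if the infection survives only at $x-1$, re-infection of $x+1$ is a two-step event and the one-line bound $O(\delta/\lambda_2)$ does not directly apply. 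A workable fix: by Fubini and your fixed-time bound for $x$, the expected Lebesgue measure of times in a window $[4T - \eta, 4T]$ at which $x$ is healthy is $O(\eta\,\delta/\lambda_2)$, so except on an event of probability $O(\delta/\lambda_2)$ the vertex $x$ is infected during at least half of the window; on that event, $x+1$ fails to end up infected only if a recovery mark hits it at rate $\delta$ late in the window or no arrow $x \to x+1$ fires while $x$ is infected, giving in total $O(\delta/\lambda_2) + \delta\eta + \exp(-\lambda_2\eta/4)$. Choosing $\eta = \lambda_2^{-1/2}$ makes all three terms vanish as $\lambda_2 \to \infty$ for fixed $\delta$ and $T$, which is all the lemma needs.
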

\begin{proof}
 First of all, we let~$N$ denote the random number of recovery marks that occur at any of the three vertices~$x - 1$ or~$x$ or~$x + 1$ by time~$4T$.
 Also, we let
 $$ 0 < T_1 < T_2 < T_3 < \cdots < T_N < 4T $$
 be the times at which these recovery marks appear.
 Given the conditioning in the statement of the lemma, vertices~$x$ and~$x + 1$ are infected at time~$4T$ whenever~$A_1 \cap A_2$ and
 $$ \begin{array}{rcl}
     A_3 & := & \hbox{between times~$T_j$ and~$T_{j + 1}$ for~$j = 1, 2, \ldots, N - 1$, there are} \vspace*{2pt} \\ &&
                \hbox{three infection arrows~$x - 1 \to x$ and~$x \to x - 1$ and~$x \to x + 1$} \vspace*{4pt} \\
     A_4 & := & \hbox{between times~$\max \,(2T, T_N)$ and~$4T$, there is an infection} \vspace*{2pt} \\ &&
                \hbox{arrow~$x - 1 \to x$ followed by an infection arrow~$x \to x + 1$} \end{array} $$
 all occur. To compute the probability of these events, let
 $$ X_i \ := \ \exponential (\lambda_2 / 2) \ \ \hbox{for} \ \ i = 1, 2, 3 \quad \hbox{and} \quad Z \ := \ \exponential (\delta) $$
 be independent.
 Now, we observe that, since recovery marks occur at each vertex at rate~$\delta$, there exists an integer~$n > 0$, fixed from now on, such that
 $$ P \,(N > n) \ = \ P \,(\poisson (3 \delta T) > n) \ < \ \ep / 8. $$
 In particular, there exists~$\lambda_2' < \infty$ such that, for all~$\lambda_2 > \lambda_2'$,
\begin{equation}
\label{eq:infect-1}
 \begin{array}{rcl}
   P \,(A_3) & \geq & P \,(A_3 \,| \,N \leq n) \,P \,(N \leq n) \vspace*{4pt} \\
             & \geq & (P \,(\max \,(X_1, X_2, X_3) < Z))^{n - 1} \,P \,(N \leq n) \vspace*{4pt} \\
             & \geq & (P \,(X_1 < Z))^{3n} \,P \,(N \leq n) \ = \ (\lambda_2 / (\lambda_2 + 2 \delta))^{3n} \,P \,(N \leq n) \vspace*{4pt} \\
             & \geq & (1 - \ep / 8)(1 - \ep / 8) \ > \ 1 - \ep / 4. \end{array}
\end{equation}
 Also, there exists~$\lambda_2'' < \infty$ such that, for all~$\lambda_2 > \lambda_2''$,
\begin{equation}
\label{eq:infect-2}
 \begin{array}{rcl}
   P \,(A_4) & = & \min \,(P \,(X_1 + X_2 < 2T), P \,(X_1 + X_2 < Z)) \vspace*{4pt} \\
             & = & \min \,((P \,(X_1 < 2T))^2, (P \,(X_1 < Z))^2) \vspace*{4pt} \\
             & = & \min \,((1 - \exp (- \lambda_2 T))^2, (\lambda_2 / (\lambda_2 + 2 \delta)^2) \ > \ 1 - \ep / 4. \end{array}
\end{equation}
 It follows from~\eqref{eq:invade-3}--\eqref{eq:infect-2} that
 $$ \begin{array}{l}
      P \,(\xi_{4T} (x) = \xi_{4T} (x + 1) = 2 \,| \,\xi_0 (x - 1) = \xi_0 (x) = 2) \vspace*{4pt} \\ \hspace*{40pt} \geq \
      P \,(A_1 \cap A_2 \cap A_3 \cap A_4) \ = \ P \,(A_1 \cap A_2) \,P \,(A_3 \cap A_4) \vspace*{4pt} \\ \hspace*{40pt} \geq \
      P \,(A_1 \cap A_2) \,(- 1 + P \,(A_3) + P \,(A_4)) \ > \ (1 - \ep / 2)^2 \ > \ 1 - \ep \end{array} $$
 for all~$\lambda_2 > \lambda_2^*$ where~$\lambda_2^* = \lambda_2^* (\ep, \delta) := \max \,(\lambda_2', \lambda_2'')$.
\end{proof} \\ \\
 To complete the proof of Theorem~\ref{th:survival}, we again turn~$\Z \times \Z_+$ into a directed graph, but now consider a different
 collection of oriented edges, namely
 $$ \begin{array}{rcl}
      (z, n) \to (z', n') & \hbox{if and only if} & |z - z'| = 1 \ \ \hbox{and} \ \ n' = n + 1. \end{array} $$
 We define a percolation process with parameter~$1 - \ep$ by assuming that
 $$ P \,((z_i, n_i) \ \hbox{is open for} \ i = 1, 2, \ldots, m) \ = \ (1 - \ep)^m $$
 whenever~$|z_i - z_j| \vee |n_i - n_j| \geq 2$ for all~$i \neq j$.
 For this process, called oriented site percolation, it is known that there exists a critical value~$\ep_c > 0$ such that, for
 all~$\ep < \ep_c$, the set of open sites percolates with probability one.
 See~\cite[section~10]{durrett_1984} for more details.
 To compare the stacked contact process with oriented site percolation, we declare site~$(z, n)$ to be
 $$ \begin{array}{rcl}
    \hbox{{\bf infected}} & \hbox{when} & \xi_{4nT} (2z) \ = \ \xi_{4nT} (2z + 1) \ = \ 2. \end{array} $$
 Now, we fix~$\ep \in (0, \ep_c)$, and let~$T (\ep)$ and~$\lambda_1^* (\ep)$ be defined as in~\eqref{eq:parameter-invade}.
 Calling again {\bf wet} site in the percolation process a site that can be reached by a directed path of open sites, it directly
 follows from Lemma~\ref{lem:infect} and~\cite[section~4]{durrett_1995} that the interacting particle system and the percolation
 process can be coupled in such a way that the set of infected sites dominates the set of wet sites provided the infection rate is
 larger than~$\lambda_2^*$.
 In particular, under the assumptions of Lemma~\ref{lem:infect},
 $$ \begin{array}{l} \liminf_{t \to \infty} \,P \,(\xi_t (x) = 2) \ \geq \ \liminf_{n \to \infty} \,P \,((z, n) \ \hbox{is wet}) \ > \ 0. \end{array} $$
 This completes the proof of Theorem~\ref{th:survival} for the one-dimensional process.
 To deal with the process in higher dimensions, we observe that adding birth and infection arrows can only increase the probability
 of the events~$A_2$, $A_3$ and~$A_4$ but does not affect the probability of the event~$A_1$ which only involves death marks.
 Since in addition, for all~$x \in \Z$, birth arrows
 $$ (x, 0, 0, \ldots, 0) \to (x \pm 1, 0, 0, \ldots, 0) $$
 for the~$d$-dimensional process with birth parameter~$d \lambda_1$ occur at rate~$\lambda_1 / 2$, and similarly for infection arrows,
 we deduce that, regardless of the spatial dimension,
 $$ \begin{array}{l}
      P \,(\xi_{4T} ((x, 0, \ldots, 0)) = \xi_{4T} ((x + 1, 0, \ldots, 0)) = 2 \,| \vspace*{4pt} \\ \hspace*{50pt}
           \xi_0 ((x - 1, 0, \ldots, 0)) = \xi_0 ((x, 0, \ldots, 0)) = 2) > 1 - \ep \end{array} $$
 for all~$\lambda_1 > d \lambda_1^*$ and~$\lambda_2 > d \lambda_2^*$.
 This shows that Lemma~\ref{lem:infect} holds in any dimensions provided the birth parameter and infection parameter are increased by the factor~$d$.
 The full theorem can be deduced as before using a coupling between the process and oriented percolation.


\section{Proof of Theorem~\ref{th:long-range} (long range interactions)}
\label{sec:long-range}

\indent This section is devoted to the proof of Theorem~\ref{th:long-range} which states that, for each set of parameters inside the coexistence region of the mean-field model,
 the stochastic process coexists as well provided the range of the interactions is sufficiently large.
 The key to the proof is a multiscale argument in order to couple the stacked contact process with supercritical oriented site percolation.
 To define this coupling, we first follow~\cite{durrett_zhang_2014} and introduce the box version of the process. \\


\noindent{\bf Box processes} --
 To control the environment so that the infection can spread, we need to introduce a process slightly smaller than the one considered in~\cite{durrett_zhang_2014}.
 For some fixed~$\ep_0 > 0$ which will be specified later, we let $l$ be the integer part of~$\ep_0 L$ and divide space into small boxes
\begin{equation}
\label{eq:boxes}
  \hat B_x \ := \ 2 l x + (-l, l]^d \quad \hbox{for all} \quad x \in \Z^d.
\end{equation}
 Note that this collection of boxes forms a partition of~$\Z^d$.
 To define the box version of a given process, the first step is to slightly reduce the interaction neighborhood of each vertex using the collection
 of boxes~\eqref{eq:boxes}.
 We define a new neighborhood of vertex~$x$ by setting
\begin{equation}
\label{eq:neighborhood}
 \begin{array}{l}
   \hat N_x \ := \ \{y \neq x : \|z_1 - z_2 \|_{\infty} \leq L \ \hbox{for all} \ z_1 \in \hat B_{x'} \ \hbox{and} \ z_2 \in \hat B_{y'} \} \end{array} 
\end{equation}
 where~$x'$ and~$y'$ are the unique vertices such that~$x \in \hat B_{x'}$ and~$y \in \hat B_{y'}$.
 In words, $\hat N_x$ is the largest set contained in~$N_x$ that can be written as a union of boxes.
\begin{lemma} --
\label{lem:neighborhood}
 For all~$x \in \Z^d$, we have
 $$ B_{\infty} (x, (1 - 4 \ep_0) L) \,\subset \,\hat N_x \,\subset \,N_x \quad \hbox{where} \quad B_{\infty} (x, r) := \{y \neq x : \|x - y \|_{\infty} \leq r \}. $$
\end{lemma}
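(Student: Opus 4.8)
The plan is to establish the two inclusions separately, the right-hand one being essentially immediate and the left-hand one reducing to a single application of the triangle inequality once the diameter of each box is controlled. Throughout I would work with the $\ell_\infty$ norm, so that $N_x = \{y \neq x : \|x - y\|_\infty \leq L\} = B_\infty(x, L)$, and I would keep in mind the basic quantitative input $l = \lfloor \ep_0 L \rfloor \leq \ep_0 L$.

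For the inclusion $\hat N_x \subset N_x$, I would simply specialize the defining condition of $\hat N_x$. Indeed, if $y \in \hat N_x$ then $\|z_1 - z_2\|_\infty \leq L$ holds for every $z_1 \in \hat B_{x'}$ and every $z_2 \in \hat B_{y'}$; since $x \in \hat B_{x'}$ and $y \in \hat B_{y'}$ by the very definition of $x'$ and $y'$, choosing $z_1 = x$ and $z_2 = y$ gives $\|x - y\|_\infty \leq L$, that is, $y \in N_x$.

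For the inclusion $B_{\infty}(x, (1 - 4\ep_0)L) \subset \hat N_x$, the key observation is that each box $\hat B_{x'} = 2 l x' + (-l, l]^d$ has $\ell_\infty$-diameter at most $2l$, since two points of $(-l, l]^d$ differ by at most $2l$ in every coordinate. In particular, as $x$ and an arbitrary $z_1 \in \hat B_{x'}$ lie in the same box, one has $\|z_1 - x\|_\infty \leq 2l$, and likewise $\|z_2 - y\|_\infty \leq 2l$ for any $z_2 \in \hat B_{y'}$. Fixing $y \neq x$ with $\|x - y\|_\infty \leq (1 - 4\ep_0)L$, the triangle inequality then gives, for all such $z_1, z_2$,
$$ \|z_1 - z_2\|_\infty \ \leq \ \|z_1 - x\|_\infty + \|x - y\|_\infty + \|y - z_2\|_\infty \ \leq \ 4l + (1 - 4\ep_0)L. $$
Using $4l \leq 4\ep_0 L$, the right-hand side is at most $4\ep_0 L + (1 - 4\ep_0)L = L$, so every $z_1 \in \hat B_{x'}$ and $z_2 \in \hat B_{y'}$ satisfy $\|z_1 - z_2\|_\infty \leq L$, whence $y \in \hat N_x$ by definition.

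I do not expect a genuine obstacle here; the only point requiring care is the bookkeeping of the two box radii. Each of the two boxes $\hat B_{x'}$ and $\hat B_{y'}$ contributes a displacement of size up to $2l$, so the combined error is $4l$, and it is precisely this factor of four that dictates the slack $4\ep_0 L$ built into the radius of the inner ball. The inequality $l \leq \ep_0 L$ is what converts this lattice-level estimate into the clean bound $(1 - 4\ep_0)L$; everything else is the direct triangle-inequality computation above.
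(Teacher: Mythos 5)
Your proof is correct and follows essentially the same route as the paper: the right-hand inclusion by specializing the defining condition of $\hat N_x$ to $z_1 = x$, $z_2 = y$, and the left-hand inclusion by the same triangle inequality with the box-diameter bound $2l$ on each side and the estimate $4l \leq 4\ep_0 L$. Your write-up is, if anything, slightly more explicit than the paper's about why $\|z_1 - x\|_\infty \leq 2l$ and where the factor of four comes from.
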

\begin{proof}
 According to~\eqref{eq:neighborhood}, we have
 $$ \begin{array}{rcl}
      y \in \hat N_x & \quad \hbox{implies that} \quad & y \neq x \ \hbox{and} \ \|x - y \|_{\infty} \leq L \vspace*{4pt} \\
                     & \quad \hbox{implies that} \quad & y \in B_{\infty} (x, L) = N_x \end{array} $$
 which shows that~$\hat N_x \subset N_x$.
 Moreover, whenever
 $$ \|x - y \|_{\infty} \leq (1 - 4 \ep_0) L \quad \hbox{and} \quad z_1 \in \hat B_{x'} \quad \hbox{and} \quad z_2 \in \hat B_{y'} $$
 the triangle inequality implies that
 $$ \begin{array}{rcl}
    \|z_1 - z_2 \|_{\infty} & \leq & \|z_1 - x \|_{\infty} + \,\|x - y \|_{\infty} + \,\|y - z_2 \|_{\infty} \vspace*{4pt} \\
                            & \leq & 2l + (1 - 4 \ep_0) L + 2l \ = \ 4 \,\lfloor \ep_0 L \rfloor + (1 - 4 \ep_0) L \ \leq \ L. \end{array} $$
 In particular, recalling~\eqref{eq:neighborhood}, we have
 $$ \begin{array}{rcl}
      y \in B_{\infty} (x, (1 - 4 \ep_0) L) & \hbox{implies that} & 0 < \|x - y \|_{\infty} \leq (1 - 4 \ep_0) L \vspace*{4pt} \\
                                            & \hbox{implies that} & 0 < \|z_1 - z_2 \|_{\infty} \leq L \ \hbox{for all} \ (z_1, z_2) \in \hat B_{x'} \times \hat B_{y'} \vspace*{4pt} \\
                                            & \hbox{implies that} & y \in \hat N_x \end{array} $$
 therefore~$B_{\infty} (x, (1 - 4 \ep_0) L) \subset \hat N_x$ and the proof is complete.
\end{proof} \\ \\
 For every finite set~$A \subset \Z^d$ and~$\hat \xi : \Z^d \to \{0, 1, 2 \}$, we now let
 $$ \begin{array}{rrl}
      \hat f_j (A, \hat \xi) & := & (\card N_0)^{-1} \,\card \,\{y \in A : \hat \xi (y) = j \} \vspace*{4pt} \\
                             &  = & ((2L + 1)^d - 1)^{-1} \,\card \,\{y \in A : \hat \xi (y) = j \} \quad \hbox{for} \quad j = 0, 1, 2, \end{array} $$
 be the number of type~$j$ vertices in the set~$A$ rescaled by the size of the original interaction neighborhood.
 The box version~$\hat \xi_t$ of the stacked contact process is then defined as the process whose transition rates at vertex~$x$ are given by
 $$ \begin{array}{rclcrcl}
     0 \ \to \ 1 & \hbox{at rate} & \lambda_1 \,(\hat f_1 (N_x, \hat \xi) + \hat f_2 (N_x \setminus \hat N_x, \hat \xi)) & \quad & 1 \ \to \ 0 & \hbox{at rate} & 1 \vspace*{2pt} \\
     0 \ \to \ 2 & \hbox{at rate} & \lambda_1 \,\hat f_2 (\hat N_x, \hat \xi) & \quad & 2 \ \to \ 0  & \hbox{at rate} & 1 \vspace*{2pt} \\
     1 \ \to \ 2 & \hbox{at rate} & \lambda_2 \,\hat f_2 (\hat N_x, \hat \xi) & \quad & 2 \ \to \ 1  & \hbox{at rate} & \delta. \end{array} $$
 In words, hosts give birth and die, and infected hosts recover at the same rate as in the original process.
 However, an infected host at vertex~$x$ can only infect hosts or send infected offspring in the smaller neighborhood~$\hat N_x$.
 In particular, we have the following lemma.
\begin{lemma} --
\label{lem:box-process}
 There is a coupling of~$\xi_t$ and~$\hat \xi_t$ such that $\hat \xi_t \preceq \xi_t$ whenever $\hat \xi_0 \preceq \xi_0$.
\end{lemma}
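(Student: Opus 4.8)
The plan is to follow the template of Lemmas~\ref{lem:attractive} and~\ref{lem:max}: construct $\xi_t$ and $\hat\xi_t$ on a single Harris graphical representation and then check, event by event along the influence graph of an arbitrary space-time point, that the partial order is preserved. The first move is to rewrite $\hat\xi_0 \preceq \xi_0$ as the pointwise inequality $\hat\xi_0(y) \le \xi_0(y)$ on the state space $\{0,1,2\}$; since ``occupied'' means state $\ge 1$ and ``infected'' means state $=2$, this pointwise order is exactly equivalent to $\preceq$, and the goal reduces to propagating $\hat\xi_s(y) \le \xi_s(y)$ forward in time.

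I would use the same death marks (rate $1$), recovery marks (rate $\delta$), birth arrows $y \to z$ (rate $\lambda_1/N$) and infection arrows $y \dashrightarrow z$ (rate $\lambda_2/N$) for both processes, the single difference being that $\hat\xi_t$ only activates the \emph{type-raising} effect of an arrow when $y \in \hat N_z$: a birth arrow from a far infected parent $y \in N_z \setminus \hat N_z$ deposits a healthy host in $\hat\xi_t$, and a far infection arrow has no effect. Summing the $0 \to 1$ and $0 \to 2$ rates shows that the occupancy coordinate of the box process is exactly the contact process with birth rate $\lambda_1$ and death rate $1$, identical to that of $\xi_t$; because both are driven by the same birth arrows and death marks, the occupied sets stay nested, $\{y : \hat\xi_t(y) \ne 0\} \subset \{y : \xi_t(y) \ne 0\}$, which already yields the first inclusion in $\preceq$.

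For the type coordinate I would run the induction of Lemma~\ref{lem:attractive}. Death and recovery act through the monotone maps $j \mapsto 0$ and $j \mapsto \min(j,1)$ and therefore preserve the order. An infection arrow $y \dashrightarrow z$ can only raise $\hat\xi(z)$ to $2$ when $z$ is healthy for $\hat\xi_t$ and $y \in \hat N_z$ is infected; then $z$ is occupied and $y$ is infected for $\xi_t$ as well, so the same arrow drives $\xi(z)$ to $2$ and the order is kept. A birth arrow $y \to z$ that fills an empty vertex deposits, for the box process, a type at most equal to $\hat\xi_{s-}(y) \le \xi_{s-}(y)$, the only genuine reduction occurring when $y$ is a far infected parent, in which case $\hat\xi_t$ records a healthy host while $\xi_t$ records an infected one. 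Checking that each such update, together with the unchanged states at the remaining vertices, preserves $\hat\xi_s \le \xi_s$ then completes the induction.

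The step I expect to be the main obstacle is precisely the birth event, because it is the one place where the occupancy ordering and the type ordering can conflict: if a vertex $z$ were empty for $\hat\xi_t$ but occupied by a \emph{healthy} host for $\xi_t$, a birth from a nearby infected parent would turn $z$ infected in the box process while leaving it healthy in $\xi_t$, violating the order. Ruling this out is the crux, and the clean way to do so is to treat the occupancy coordinate as a genuinely common backbone---so that $z$ is vacated and filled at the same instants in both processes and no healthy-versus-empty discrepancy can accumulate---after which the reduced neighborhood $\hat N_z \subset N_z$ is left as the only source of difference and the comparison of types above applies verbatim.
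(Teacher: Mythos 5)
Your proposal is correct and follows essentially the same route as the paper's proof: the paper likewise builds $\hat \xi_t$ on the graphical representation of $\xi_t$ by deleting the far infection arrows and downgrading far births from infected parents to healthy births, and it resolves your ``crux'' exactly as you suggest, by observing that birth arrows and death marks act identically on occupancy in both processes, so the occupied sets coincide for all time, leaving only the infected-set inclusion to be checked by the inductive argument of Lemmas~\ref{lem:attractive} and~\ref{lem:monotone}. Note that this common-backbone property (equality, not mere nestedness, of the occupied sets) is what rules out the empty-versus-healthy discrepancy, and it is available because the lemma is applied to processes started from the same set of hosts; this implicit strengthening of the hypothesis $\hat \xi_0 \preceq \xi_0$ is present in the paper's proof as well.
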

\begin{proof}
 The stacked contact process~$\xi_t$ being constructed from the graphical representation introduced in the previous section, the box
 version~$\hat \xi_t$ can be constructed from the graphical representation obtained from the following two modifications:
\begin{enumerate}
 \item Remove all the infection arrows~$x \dasharrow y$ such that~$y \in N_x \setminus \hat N_x$. \vspace*{4pt}
 \item Label all the birth arrows~$x \longrightarrow y$ such that~$y \in N_x \setminus \hat N_x$ with a 1.
\end{enumerate}
 The box process is then constructed by assuming that
 $$ \hat \xi_{s-} (x) \neq 0, \ \ \hat \xi_{s-} (y) = 0 \ \ \hbox{and} \ \ (x, s) \overset{1}{\longrightarrow} (y, s) \quad \hbox{implies that} \quad \hat \xi_s (y) = 1 $$
 but otherwise using the same rules as for the original process.
 Since the birth arrows and the death marks occur at the same rate for both processes and have the same effect on whether vertices are
 empty or occupied, it follows that
 $$ \{x \in \Z^d : \hat \xi_t (x) \neq 0 \} \ = \ \{x \in \Z^d : \xi_t (x) \neq 0 \} \quad \hbox{for all} \quad t \geq 0. $$
 This can also be seen from adding the birth rates:
 $$ \begin{array}{l}
    \hat f_1 (N_x, \hat \xi) + \hat f_2 (N_x \setminus \hat N_x, \hat \xi) + \hat f_2 (\hat N_x, \hat \xi) \vspace*{4pt} \\ \hspace*{25pt} = \
    \hat f_1 (N_x, \hat \xi) + \hat f_2 (N_x, \hat \xi) \ = \ f_1 (x, \hat \xi) + f_2 (x, \hat \xi) \end{array} $$
 In particular, we only need to prove that
 $$ \{x \in \Z^d : \hat \xi_t (x) = 2 \} \ \subset \ \{x \in \Z^d : \xi_t (x) = 2 \} \quad \hbox{for all} \quad t \geq 0 $$
 which follows from the same argument as in the proof of Lemma~\ref{lem:monotone}.
\end{proof} \\


\begin{figure}[t]
\centering
\scalebox{0.40}{\input{block.pstex_t}}
\caption{\upshape{Picture of the block construction.}}
\label{fig:block}
\end{figure}

\noindent{\bf Block construction} --
 To complete the proof of the theorem, we compare the process with long range interactions with the same oriented site percolation process
 as in the previous section but using other space and time scales.
 Before going into the details of the proof, we start with a brief overview of the key steps which are illustrated in Figure~\ref{fig:block}.
 For simplicity, we only prove the result in one dimension but our approach easily extend to higher dimensions. Let
\begin{equation}
\label{UBD}
 \begin{array}{rcl}
    b & := & \lambda_2 \,(1 - 1 / \lambda_1) - \delta \vspace*{4pt} \\
  B_0 & := & 1 + \delta \vspace*{4pt} \\
    c & := & 8 \,(2B_0 + 1) / b \vspace*{4pt} \\
  T_1 & := & (1 + c) \,T + 2 \ep_0^{-1} \sqrt{T} \end{array}
\end{equation}
 where~$T$ is a large parameter to be fixed later and where~$\ep_0$ has been defined above to fix the size of the small boxes.
 Then, we declare site~$(z, n) \in \Z \times \Z_+$ to be {\bf infected} when
\begin{equation}
\label{eq:infected}
  \begin{array}{rcl}
    \card \{x \in \hat B_0 + z \sqrt{T} L : \xi_t (x) \neq 0 \} & \geq & l \,u_* \ = \ l \,(1 - 1 / \lambda_1) \vspace*{4pt} \\
    \card \{x \in \hat B_0 + z \sqrt{T} L : \xi_t (x) = 2 \}    & \geq & 2 l \,\exp (-T) \end{array}
\end{equation}
 for some~$t \in [2n T_1, (2n + 1) \,T_1]$.
 The proof combines three ingredients.
\begin{enumerate}
 \item {\bf Growth of the hosts} -- First, we show that, given the first event in~\eqref{eq:infected}, the population of hosts spreads
   so that, in the large space-time block drawn with a thick frame in the picture, the density of hosts in each small box is close to the mean-field
   equilibrium~$u_*$.
   This provides a habitat that the infection can invade.
   This is proved in Lemma~\ref{lem:contact}. \vspace*{4pt}
 \item {\bf Growth of the infection} -- The next step is to prove that, in this large space-time block, we can also
   increase the population of infected hosts as long as their density is low.
   This implies that at least one of the following two events must occur:
   \begin{enumerate}
    \item The density of infected hosts in the small box at the center of the large spatial block is larger than~$\exp (- T/2)$ at the fixed time~$(1 + c) \,T$.
    \item The density of infected hosts in some small box in the large spatial block is larger than~$T^{-1}$ at some random time before~$(1 + c) \,T$.
   \end{enumerate}
   This result is proved in Lemma~\ref{lem:growth}. \vspace*{4pt}
 \item {\bf Moving the infection} -- The last key ingredient is to show that infected hosts within a small box can quickly spread along a path of adjacent
   small boxes contained in the large spatial block without the density of the infected hosts decreasing too fast.
   This is used to prove that, each time event~(b) above occurs, we can re-center the infection to recover~(a) after a short time period, which is
   established rigorously in Lemmas~\ref{lem:move}--\ref{lem:target}.
\end{enumerate}
 Having step~1 allows us to repeat steps 2 and 3 a finite number of times so that for a long time there is always a small box in the large spatial block
 that has a reasonably large density of infected hosts.
 To complete the construction, we apply again step 3 to create two small boxes with the desired density of infected hosts at the center of the two adjacent blocks.
 To make this argument precise, we recall that hosts, either healthy or infected, evolve according to a basic contact process~$\eta_t$ with birth rate
 $\lambda_1 > 1$, death rate one and dispersal range $L$.
 We consider the interval
 $$ I_T \ := \ [- 2 \sqrt{T} L, 2 \sqrt{T} L] $$
 and let~$\bar \eta_t$ denote this contact process modified so that births outside~$2 I_T$ are not allowed, i.e., offspring sent outside this box are instantaneously killed.
 Finally, we let
 $$ \begin{array}{rcl}
                     \Delta_T & := & \{x \in \Z^d : \hat B_x \subset I_T \} \quad \hbox{and} \vspace*{4pt} \\
    \hat u_j (x, \bar \eta_t) & := & (\card \hat B_x)^{-1} \,\sum_{y \in \hat B_x} \ind \,\{\bar \eta_t (y) = j \}
                              \  = \ (2l)^{-1} \,\sum_{y \in \hat B_x} \ind \,\{\bar \eta_t (y) = j \} \end{array} $$
 be the fraction of type~$j$ vertices in box~$\hat B_x$ at time~$t$.
 In the next lemma, we prove that we can increase the population of hosts so that their density in each small box is close to the mean-field equilibrium~$u_*$.
\begin{lemma} --
\label{lem:contact}
 Assume that~$\hat u_1 (0, \eta_0) \geq (1/2) \,u_*$.
 Then, for all~$\rho > 0$,
 $$ \begin{array}{l}
    \lim_{L \to \infty} \,P \,(|\hat u_1 (x, \bar \eta_t) - u_*| < \rho \ \hbox{for all} \ t \in [T, 4(1+ c) \,T] \ \hbox{and} \ x \in \Delta_T) \ = \ 1 \end{array} $$
 whenever~$T$ is sufficiently large.
\end{lemma}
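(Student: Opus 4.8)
The plan is to read this as a quantitative mean-field statement for the large-range contact process~$\bar\eta_t$ and to compare the box densities~$\hat u_1(x, \bar\eta_t)$ with the logistic equation
$$ \dot u \ = \ (\lambda_1 \,(1 - u) - 1) \,u, $$
whose only stable fixed point in~$(0, 1]$ is~$u_* = 1 - 1/\lambda_1$. I would split the argument into a growth phase on~$[0, T]$, during which the hosts spread from the central box~$\hat B_0$ to fill the whole interval~$I_T$ and each box density climbs to within~$\rho$ of~$u_*$, and a stability phase on~$[T, 4(1 + c) \,T]$, during which the densities stay trapped near the attracting equilibrium~$u_*$. Since~$T$ is fixed before~$L \to \infty$, both phases occupy a time interval of fixed length, so the whole statement reduces to a finite-time law of large numbers for the density field.

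First I would establish that law of large numbers. Writing the number of occupied sites in~$\hat B_x$ as a semimartingale and reading off its drift from the contact process rates, the empty-to-occupied transitions at a vertex~$y$ occur at rate~$\lambda_1$ times the occupied fraction of~$N_y$. Because the neighborhood size~$(2L + 1)^d - 1$ and the box size~$(2 \lfloor \ep_0 L \rfloor)^d$ both diverge with~$L$, and because the density field varies only on the macroscopic scale~$\sqrt{T} L \gg L$, this fraction is, uniformly over~$y \in \hat B_x$, within~$o(1)$ of~$\hat u_1(x, \bar\eta_t)$ and its neighbours. Hence the drift of~$\hat u_1(x, \bar\eta_t)$ equals the logistic reaction term plus a small diffusive coupling to adjacent boxes, up to vanishing errors, while the martingale part has quadratic variation of order~$(2 \lfloor \ep_0 L \rfloor)^{-d} = o(1)$. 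A Gronwall estimate controls the accumulated error over the fixed window, and a union bound over the~$\Theta(\sqrt{T} / \ep_0)$ boxes of~$\Delta_T$ and a fine time grid upgrades this to a uniform statement, the per-box failure probability being exponentially small in the diverging box size. This is precisely the kind of estimate developed in~\cite{durrett_zhang_2014} and~\cite{durrett_1995}, which I would invoke for the technical apparatus.

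Next I would treat the spatial spread, the only place the hypothesis~$\hat u_1(0, \eta_0) \geq (1/2) \,u_*$ enters, since it furnishes a central box with density bounded away from zero. Because the interaction radius~$L$ dwarfs the box width~$2l \approx 2 \ep_0 L$ and the reduced neighborhoods of Lemma~\ref{lem:neighborhood} still contain a macroscopic~$\ell_\infty$-ball, a box whose density exceeds a fixed threshold sends offspring into each neighbouring box at a rate bounded below; within~$O(1)$ time this lifts the adjacent box densities above that threshold, and the reaction term then drives each freshly occupied box up toward~$u_*$. Iterating, the occupied region advances one box-width per~$O(1)$ time, crossing the half-width~$2 \sqrt{T} L$ of~$I_T$ in time~$\Theta(\ep_0^{-1} \sqrt{T}) \ll T$, which is exactly what the buffer term~$2 \ep_0^{-1} \sqrt{T}$ in~\eqref{UBD} is designed to accommodate. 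Taking~$T$ large ensures both that this invasion completes well before time~$T$ and that the logistic flow has settled within~$\rho$ of~$u_*$, so by time~$T$ every box of~$\Delta_T$ lies within~$\rho$ of~$u_*$ with probability tending to one.

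Finally, on~$[T, 4(1 + c) \,T]$ the equilibrium~$u_*$ is attracting for the mean-field flow, so the law of large numbers keeps every box density within~$\rho$ of~$u_*$ for the remaining~$O(T)$ time. Killing births outside~$2 I_T$ only depresses densities in a boundary layer of width~$O(L)$ near~$\partial(2 I_T)$; since~$I_T$ sits at distance~$2 \sqrt{T} L \gg L$ from that boundary this never reaches~$\Delta_T$, and in any case the coupling~$\bar\eta_t \leq \eta_t$ with the unrestricted process supplies the upper bound~$\hat u_1 < u_* + \rho$ directly. The hard part will be the growth phase: one must propagate the invasion across the entire interval while simultaneously keeping the mean-field approximation valid through the transient, uniformly over all boxes and times. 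This is where the reduced box neighborhoods of Lemma~\ref{lem:box-process} and the sharp front-propagation estimates for the large-range contact process from~\cite{durrett_zhang_2014} carry the weight; once~$I_T$ is filled, the stability phase is routine.
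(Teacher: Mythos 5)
Your route is genuinely different from the paper's, and as written it contains a real gap. The paper never compares the density field with an ODE at all: it computes the expectation $E(\hat u_1(x,\bar\eta_t))$ exactly via self-duality, writing $P(\bar\eta_t(y)=1)=P(\bar\eta^y_t\cap B\neq\varnothing)$ where $B$ is the set of initially occupied sites of $\hat B_0$, observes that the hypothesis $\hat u_1(0,\eta_0)\geq(1/2)\,u_*$ makes $\card B$ large enough to apply~\cite[Lemma~3.4]{durrett_lanchier_2008}, so that $P(\bar\eta^y_t\cap B\neq\varnothing)\to u_*$ as $L\to\infty$, and then upgrades the expectation statement to uniform control over all $x\in\Delta_T$ and $t\in[T,4(1+c)\,T]$ by the concentration estimate~\cite[Lemma~3.5]{durrett_lanchier_2008}. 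Duality is precisely the device that makes the localized initial condition harmless: the question ``does the population spread from one small box and equilibrate everywhere'' becomes ``does a dual process started from a single site hit the large set $B$,'' and no invasion or front-propagation analysis is ever needed.

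The gap in your argument is an internal inconsistency between your two main steps. In the law-of-large-numbers step you claim the drift of $\hat u_1(x,\bar\eta_t)$ is the local logistic term plus ``a small diffusive coupling to adjacent boxes, up to vanishing errors,'' justified by the assertion that the density field varies only on the scale $\sqrt{T}L\gg L$. That assertion is false at time zero, when occupation is confined to the single box $\hat B_0$ of width $2l\approx 2\ep_0 L\ll L$, and it remains false throughout the invasion phase, where the field varies on the scale $L$ of the front. The true drift couples each box to the $\Theta(\ep_0^{-1})$ boxes met by a range-$L$ neighborhood, with order-one (not vanishing) weights; this nonlocal term is the mechanism of spread. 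If the inter-box coupling really were $o(1)$ as $L\to\infty$, your own invasion step would fail: an initially empty box would need a time diverging with $L$ to climb to any fixed threshold, so the front could not cross $I_T$ inside the fixed window $[0,T]$. Your step two tacitly uses the order-one coupling (``sends offspring into each neighbouring box at a rate bounded below''), contradicting step one, and even there the mechanism is too coarse: one $O(1)$ round of seeding raises a neighboring box only to $\Theta(\ep_0^d)$ times the source density, so reaching the threshold requires a nonlocal KPP-type growth argument that you have not set up and that the references you invoke do not supply in the form needed. In short, the ``hard part'' you correctly identify --- propagating the invasion while keeping the mean-field approximation valid --- is exactly what is missing; the paper's self-duality argument is the standard way to bypass it, reducing both the growth and the stability phases to the two inputs imported from~\cite{durrett_lanchier_2008}.
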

\begin{proof}
 First, we note that
\begin{equation}
\label{eq:contact-1}
  \begin{array}{l} E \,(\hat u_1 (x, \bar \eta_t)) \ = \ (2l)^{-1} \,\sum_{y \in \hat B_x} P \,(\bar \eta_t (y) = 1) \end{array} 
\end{equation}
 Now, since the contact process is self-dual, we also have
\begin{equation}
\label{eq:contact-2}
  P \,(\bar \eta_t (y) = 1) = P \,(\bar \eta^y_t \cap B \neq \varnothing) \quad \hbox{where} \quad B := \{z \in \hat B_0 : \bar \eta_0 (z) = 1 \}
\end{equation}
 and where~$\bar \eta^y_t$ is the contact process starting with a single individual at vertex~$y$.
 In addition, since the initial fraction of occupied vertices in box~$\hat B_0$ is larger than~$(1/2) \,u_*$,
 $$ \begin{array}{rcl}
    \card (B) & = & (2l)^d \,\hat u_1 (0, \bar \eta_0) \ \geq \ (2l)^d \,(1/2) \,u_* \vspace*{4pt} \\ 
              & \geq & (2L)^d \,\exp (- ((\lambda_1 - 1) / 2^d) \,t) \quad \hbox{for all $t$ large}. \end{array} $$
 In particular, it follows from~\cite[Lemma~3.4]{durrett_lanchier_2008} that, for all~$x \in \Delta_T$,
\begin{equation}
\label{eq:contact-3}
  \begin{array}{l} \lim_{L \to \infty} \,P \,(\bar \eta^y_t \cap B \neq \varnothing) \ = \ u_* \quad \hbox{for all~$t$ large and $y \in \hat B_x$}. \end{array}
\end{equation}
 Combining~\eqref{eq:contact-1}--\eqref{eq:contact-3}, we deduce that, for all~$t$ large and~$x \in \Delta_T$,
\begin{equation}
\label{eq:contact-4}
  \begin{array}{rcl}
   |E \,(\hat u_1 (x, \bar \eta_t)) - u_*| & = & |(2l)^{-1} \,\sum_{y \in \hat B_x} (P \,(\bar \eta_t (y) = 1) - u_*)| \vspace*{4pt} \\
                                           & = & |(2l)^{-1} \,\sum_{y \in \hat B_x} (P \,(\bar \eta^y_t \cap B \neq \varnothing) - u_*)| \ < \ \rho / 2 \end{array}
\end{equation}
 for all~$L$ sufficiently large.
 In view of~\eqref{eq:contact-4},
 $$ \begin{array}{l}
    \lim_{L \to \infty} \,P \,(|\hat u_1 (x, \bar \eta_t) - u_*| \geq \rho \ \hbox{for some} \ t \in [T, 4(1 + c) \,T] \ \hbox{and} \ x \in \Delta_T) \vspace*{4pt} \\ \hspace*{25pt} \leq \
    \lim_{L \to \infty} \,P \,(|\hat u_1 (x, \bar \eta_t) - E \,(\hat u_1 (x, \bar \eta_t))| \geq \rho / 2  \vspace*{4pt} \\ \hspace*{120pt} \hbox{for some} \ t \in [T, 4(1 + c) \,T] \ \hbox{and} \ x \in \Delta_T) \end{array} $$
 which is equal to zero according to~\cite[Lemma~3.5]{durrett_lanchier_2008}.
\end{proof} \\ \\
 To state our next lemma, we define
 $$ \begin{array}{rcl}
          \hat u_2 (x, \bar \xi_t) & := & (\card \hat B_x)^{-1} \,\sum_{y \in \hat B_x} \ind \,\{\bar \xi_t (y) = 2 \} \  = \ (2l)^{-1} \,\sum_{y \in \hat B_x} \ind \,\{\bar \xi_t (y) = 2 \} \vspace*{4pt} \\
    \hat u_{1 + 2} (x, \bar \xi_t) & := & (\card \hat B_x)^{-1} \,\sum_{y \in \hat B_x} \ind \,\{\bar \xi_t (y) \neq 0 \} \  = \ (2l)^{-1} \,\sum_{y \in \hat B_x} \ind \,\{\bar \xi_t (y) \neq 0 \} \end{array} $$
 where the process~$\bar \xi_t$ is the stacked contact process modified so that hosts outside~$I_T$ cannot get infected and offspring sent outside this interval instantaneously recover.
 According to Lemma~\ref{lem:attractive}, this process is dominated by the original stacked contact process.
 In the next lemma, we show that we can increase the population of infected hosts as long as their density is low.
\begin{lemma} --
\label{lem:growth}
 Assume that
 $$ \hat u_{1 + 2} (0, \bar \xi_0) \ > \ (1/2) \,u_* \quad \hbox{and} \quad \hat u_2 (0, \bar \xi_0) \ > \ \exp (-T). $$
 Then, for~$T$ large and in the limit as~$L \to \infty$, one of the following two events holds
\begin{enumerate}
 \item $\hat u_2 (0, \bar \xi_{(1 + c) T}) \geq \exp (-T/2)$, \vspace*{4pt}
 \item $\hat u_2 (x, \bar \xi_t) \geq T^{-1}$ for some~$t \in [T, (1 + c) \,T]$ and~$\hat B_x \subset I_T$.
\end{enumerate}
\end{lemma}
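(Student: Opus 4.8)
The plan is to reveal the host environment first and then grow the infection inside it by comparison with a supercritical long range branching process. Since the occupied-site process of~$\bar \xi_t$ is autonomous (it is the contact process driving the hosts, as noted in Section~\ref{sec:monotonicity}), I would first expose the birth arrows and death marks and condition on the event~$G$ that $|\hat u_1 (x, \bar \eta_t) - u_*| < \rho$ for all $x \in \Delta_T$ and all $t \in [T, 4(1 + c) T]$. By Lemma~\ref{lem:contact}, applied to the host (occupied-site) process started from $\hat u_{1 + 2} (0, \bar \xi_0) > (1/2) u_*$, we have $P (G) \to 1$ as $L \to \infty$ once $\rho$ is fixed small. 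On~$G$ the density of infectable (occupied) sites in every box of $\Delta_T$ is within~$\rho$ of~$u_*$ throughout the relevant time window, which is the habitat in which the infection evolves.

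Next I would set up the dichotomy through a stopping time. Let $\sigma := \inf \{t \geq T : \hat u_2 (x, \bar \xi_t) \geq T^{-1} \ \hbox{for some } x \hbox{ with } \hat B_x \subset I_T \}$. If $\sigma \leq (1 + c) T$ then the second conclusion $\hat u_2 (x, \bar \xi_t) \geq T^{-1}$ holds by definition, so it suffices to prove that, on $G \cap \{\sigma > (1 + c) T \}$, the first conclusion $\hat u_2 (0, \bar \xi_{(1 + c) T}) \geq \exp (- T/2)$ holds with probability tending to one. On this event the infection stays \emph{dilute}: $\hat u_2 (x, \bar \xi_t) < T^{-1}$ in every box for all $t \in [T, (1 + c) T]$, so the density of healthy hosts available for infection in each box is at least $u_* - \rho - T^{-1}$. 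In this dilute regime each infected host creates a new infected host---either by sending infected offspring onto an empty neighbor (rate $\simeq \lambda_1 (1 - u_*) = 1$) or by infecting a healthy neighbor (rate $\simeq \lambda_2 u_*$)---while it leaves the infected state at rate $1 + \delta = B_0$, so the net exponential rate is $\simeq 1 + \lambda_2 u_* - B_0 = \lambda_2 u_* - \delta = b > 0$, with the new infected site lying within range~$L$ of its parent. I would make this precise as a stochastic domination: as long as $t < \sigma$, the set of infected sites dominates a branching random walk confined to~$I_T$ (offspring landing outside~$I_T$ being discarded, in agreement with the definition of~$\bar \xi_t$) with branching rate $\geq 1 + \lambda_2 (u_* - \rho)$, death rate~$B_0$, and step distribution supported on the range-$L$ neighborhood. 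The dilute condition guarantees that collisions---two infected hosts targeting the same vertex, or a birth onto an already infected site---are negligible, so the domination is valid up to time~$\sigma$.

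Finally I would analyze this confined branching random walk. Its growth rate is the top Dirichlet eigenvalue of the generator on~$I_T$, namely~$b$ minus a boundary correction of order $D / |I_T|^2$; since the spatial steps have size $O (L)$ and occur at rate $O (1)$, the diffusivity is $D = O (L^2)$ while $|I_T|^2 \asymp T L^2$, so the correction is only $O (1/T)$ and the effective growth rate is $b - O (1/T)$. Moreover the principal eigenfunction is centered and bounded below on the central box, and the spectral gap is also $O (1/T)$, so after a relaxation time $O (T)$ the profile has settled and the density in box~$0$ equals, up to a constant, the total infected mass divided by the $O (\sqrt T)$ boxes of~$\Delta_T$. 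Starting from $2l \,\exp (-T) \to \infty$ infected particles, and using a second-moment (law of large numbers) estimate valid precisely because this initial count diverges as $L \to \infty$, the total mass at time $(1 + c) T$ is at least $2l \,\exp (-T) \,\exp ((b - O (1/T))(1 + c) T)$ with probability tending to one. Dividing by $O (\sqrt T \,L)$ and using $b (1 + c) \geq bc = 8 \,(2 B_0 + 1) \geq 8 > 1/2$, the resulting density in box~$0$ exceeds $\exp (- T/2)$ for all~$T$ large, which is the first conclusion. The main obstacle is this last step: quantifying the loss from confinement and relaxation so that it stays negligible against the generous margin built into~$c$, and upgrading the mean (hydrodynamic) behaviour of the branching random walk to an almost sure lower bound via concentration as $L \to \infty$. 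Controlling the early window $[0, T]$, where the environment has not yet equilibrated, only costs another bounded exponential factor that the choice of~$c$ absorbs.
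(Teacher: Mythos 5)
Your overall skeleton matches the paper's proof: condition on the host environment equilibrating near~$u_*$ via Lemma~\ref{lem:contact} (your event~$G$ is the paper's event~$A_1$), set up the dichotomy through a dilute event (your stopping time~$\sigma$ plays the role of the paper's event~$A_2$, stated there for the box process~$\hat \xi_t$ of Lemma~\ref{lem:box-process}), and exploit that in the dilute regime each infected host produces new infected hosts at net rate close to~$b = \lambda_2 \,(1 - 1/\lambda_1) - \delta > 0$, with the margin~$c = 8 \,(2B_0 + 1)/b$ absorbing all exponential losses. Where you genuinely diverge is the engine that converts this supercritical rate into growth of the density at the central box. The paper dominates the infected set from below by a long range \emph{contact process}~$\zeta_t$ with birth rate~$\beta := 1 + \delta + b/2$, death rate~$1 + \delta$ and dispersal range~$L - 4l$ (the coupling being made clean by the reduced neighborhoods~$\hat N_x$ of Lemma~\ref{lem:neighborhood} and the box process of Lemma~\ref{lem:box-process}), and then cites Lemmas~3.1 and~4.3 of~\cite{durrett_lanchier_2008} for, respectively, the control of the initial window~$[0, T]$ and the growth of~$\zeta_t$ as~$L \to \infty$. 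You instead dominate by a collision-thinned branching random walk confined to~$I_T$ and propose to analyze it from scratch (many-to-one, confinement cost, concentration via a law of large numbers over the~$2l \,\exp (-T) \to \infty$ initial particles). Your route is self-contained where the paper outsources the hard analytic work; the paper's route avoids precisely the delicate confinement and relaxation estimates that you correctly identify as the main obstacle, because those are already packaged in the cited lemmas.

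Two caveats. First, your treatment of the window~$[0, T]$ is too quick: the paper needs the separate event~$A_0 = \{\hat u_2 (0, \bar \xi_T) > (1/2) \exp (- T \,(1 + B_0)) \}$, proved by a law of large numbers of the same kind you invoke later (each initially infected host keeps its infection for time~$T$ with probability~$\exp (- B_0 T)$, and the initial count diverges as~$L \to \infty$); this is a needed estimate, not merely a bookkeeping remark, although your constant~$c$ does absorb the resulting factor. Second, your relaxation claim is wrong as stated: with spectral gap of order~$1/T$ and a time horizon of order~$T$, the subleading modes are damped only by constant factors, so the profile does \emph{not} settle onto the principal eigenfunction. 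What you actually need is weaker --- a lower bound of order~$1/\sqrt{T}$ on the probability that a single confined random walk started near the origin ends in the central box at time~$(1 + c) \,T$ --- and this follows from a direct estimate rather than spectral convergence. Since your final margin is of order~$\exp (8 \,(2B_0 + 1) \,T)$ against a target of~$\exp (-T/2)$, both fixes cost nothing, so I regard these as repairable imprecisions rather than gaps.
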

\begin{proof}
 To begin with, we define the two events
 $$ \begin{array}{rcl}
      A_0 & := & \{\hat u_2 (0, \bar \xi_T) > (1/2) \,\exp (-T \,(1 + B_0)) \} \vspace*{4pt} \\
      A_1 & := & \{|\hat u_{1 + 2} (x, \bar \xi_t) - u_*| < \rho \ \hbox{for all} \ t \in [T, 4(1+ c) \,T] \ \hbox{and} \ x \in \Delta_T \}. \end{array} $$
 According to~\cite[Lemma~3.1]{durrett_lanchier_2008}, the event~$A_0$ occurs with probability arbitrarily close to one when the range of the interactions is large, which,
 with Lemma~\ref{lem:contact}, gives
\begin{equation}
\label{eq:growth-1}
 \begin{array}{l} \lim_{L \to \infty} \,P \,(A_0) \ = \ \lim_{L \to \infty} \,P \,(A_1) \ = \ 1 \quad \hbox{when~$T$ is large}. \end{array}
\end{equation}
 Now, start a copy of the box version of the modified stacked contact process with~$\hat \xi_T = \bar \xi_T$ at time~$T$, both processes being constructed starting from this
 time from the same graphical representation as in Lemma~\ref{lem:box-process}. Let
 $$ \begin{array}{rcl} A_2 & := & \{\hat u_2 (x, \hat \xi_t) < T^{-1} \ \hbox{for all} \ t \in [T, (1+ c) \,T] \ \hbox{and} \ x \in \Delta_T \}. \end{array} $$
 Then, on the event~$A_1 \cap A_2$ and for all~$(x, t) \in [- \sqrt{T} L, \sqrt{T} L] \times [T, (1 + c) \,T]$ such that~$\hat \xi_t (x) = 2$, this type~2 particle dies at rate~$1 + \delta$
 and reproduces at rate at least
 $$ L^{-1} \,l \,(\lambda_1 \,(1 - u_* - \rho) + \lambda_2 \,(u_* - \rho)) $$
 to every~$\hat B_y \subset N_x$, which is a lower bound for the rate at which an infected host gives birth plus the rate at which it infects a healthy host.
 Since~$b = \lambda_2 \,(1 - 1 / \lambda_1) - \delta > 0$,
 $$ \begin{array}{l}
     (1 - 4 \ep_0)(\lambda_1 \,(1 / \lambda_1 - \rho) + \lambda_2 \,(1 - 1 / \lambda_1 - \rho)) \vspace*{4pt} \\ \hspace*{40pt} = \
     (1 - 4 \ep_0)(1 + \lambda_2 \,(1 - 1 / \lambda_1) - (\lambda_1 + \lambda_2) \,\rho) \vspace*{4pt} \\ \hspace*{40pt} = \
     (1 - 4 \ep_0)(1 + \delta + b - (\lambda_1 + \lambda_2) \,\rho) \ > \ 1 + \delta + b / 2 \ =: \ \beta \end{array} $$
 for some~$\rho, \ep_0 > 0$ fixed from now on.
 Now, we consider the basic contact process~$\zeta_t$ with birth rate~$\beta$, death rate~$1 + \delta$ and dispersal range~$L - 4l$, again modified so that births
 outside~$I_T$ are not allowed, i.e., offspring sent outside this box are instantaneously killed.
 Then, according to Lemmas~\ref{lem:neighborhood}--\ref{lem:box-process}, the set of infected hosts in the modified stacked contact process~$\bar \xi_t$ dominates the set of
 infected hosts in its box version~$\hat \xi_t$, which in turn dominates stochastically the set of occupied vertices in~$\zeta_t$ provided the corresponding sets are initially
 the same.
 This, together with~\cite[Lemma~4.3]{durrett_lanchier_2008} applied to the contact process~$\zeta_t$ when the dispersal range goes to infinity, implies that
 $$ \begin{array}{l} \lim_{L \to \infty} \,P \,(\{\hat u_2 (0, \bar \xi_{(1 + c) T}) < \exp(- T/2) \} \cap A_2 \,| \,A_0 \cap A_1) \ = \ 0. \end{array} $$
 Finally, using~\eqref{eq:growth-1} and again Lemma~\ref{lem:box-process}, we deduce that
 $$ \begin{array}{l}
    \lim_{L \to \infty} \,P \,(\hat u_2 (0, \bar \xi_{(1 + c) T}) \geq \exp(- T/2) \ \hbox{or} \vspace*{4pt} \\ \hspace*{100pt}
                               \hat u_2 (x, \bar \xi_t) \geq T^{-1} \ \hbox{for some} \ t \in [T, (1 + c) \,T] \ \hbox{and} \ \hat B_x \subset I_T) \vspace*{4pt} \\ \hspace*{10pt} \geq \
    \lim_{L \to \infty} \,P \,(\{\hat u_2 (0, \bar \xi_{(1 + c) T}) \geq \exp(- T/2) \} \cup A_2^c \,| \,A_0 \cap A_1) \vspace*{4pt} \\ \hspace*{10pt} = \
     1 - \lim_{L \to \infty} \,P \,(\{\hat u_2 (0, \bar \xi_{(1 + c) T}) < \exp(- T/2) \} \cap A_2 \,| \,A_0 \cap A_1) \ = \ 1. \end{array} $$
 This completes the proof.
\end{proof} \\ \\
 In the next two lemmas, we prove that, when event~(b) occurs, infected hosts within a small box can quickly spread along a path of adjacent small boxes without
 the density of the infected hosts decreasing too fast, which will allow us to re-center the infection and recover~(a) after a short time period.
 To make this precise, we let
 $$ H_{z, t} \ := \ 2l \,\hat u_2 (z, \xi_t) \ = \ \card \,\{x \in \hat B_z : \xi_t (x) = 2 \} $$
 be the number of infected hosts in the small box~$\hat B_z$ at time $t$.
 In the proofs, the stacked contact process is constructed from the graphical representation in Table~\ref{tab:move} where $\lambda := \min \,(\lambda_1,\lambda_2)$.
\begin{table}[t]
\begin{center}
\begin{tabular}{ccp{190pt}}
\hline \noalign{\vspace*{2pt}}
 rate                        & symbol                             & effect on the process~$\xi_t$ \\ \noalign{\vspace*{1pt}} \hline \noalign{\vspace*{6pt}}
 1                           & $\times$ at $y$ \ \                & death at~$y$ when~$y$ is occupied \\ \noalign{\vspace*{3pt}}
 $\delta$                    & $\bullet$ at $y$ \ \               & recovery at~$y$ when~$y$ is infected \\ \noalign{\vspace*{3pt}}
 $\lambda / N$               & $y \overset{0}{\longrightarrow} z$ & birth when~$y$ is occupied and~$z$ is empty and \newline infection when~$y$ is infected and~$z$ in state~1 \\ \noalign{\vspace*{3pt}}
 $(\lambda_1 - \lambda) / N$ & $y \overset{1}{\longrightarrow} z$ & birth when~$y$ is occupied and~$z$ is empty \\ \noalign{\vspace*{3pt}}
 $(\lambda_2 - \lambda) / N$ & $y \overset{2}{\longrightarrow} z$ & infection when~$y$ is infected and~$z$ in state~1 \\ \noalign{\vspace*{4pt}} \hline
\end{tabular}
\end{center}
\caption{\upshape{Graphical Representation used in Lemma~\ref{lem:move}}}
\label{tab:move}
\end{table}
\begin{lemma} --
\label{lem:move}
 There exist~$a > 0$ and~$L_0 < \infty$ such that, for all~$L > L_0$,
 $$ P \,(H_{z, 1} \geq a \,H_{0, 0} \,| \,H_{0, 0} \geq 2l \exp (-T)) \ > \ 1 - \exp (- \sqrt L) \quad \hbox{for} \quad z = -1, 0, 1. $$
\end{lemma}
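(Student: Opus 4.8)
The plan is to reduce the statement to a lower bound for a single basic contact process and then to treat the center box and the two neighbouring boxes separately. Following the graphical coupling used in the proof of Lemma~\ref{lem:min}, I would first observe that, using only the type-0 arrows of Table~\ref{tab:move} (which fire at rate $\lambda / N$ with $\lambda = \min (\lambda_1, \lambda_2)$ and turn any non-infected target into an infected one, either by a birth onto an empty vertex or by a horizontal infection of a healthy vertex), the set of infected vertices dominates a basic contact process $\zeta_t$ with birth rate $\lambda$, death rate $1 + \delta$ and range $L$, coupled so that every occupied vertex of $\zeta_t$ is infected for $\xi_t$, provided $\zeta_0$ is the set $S$ of infected vertices of $\hat B_0$ at time $0$. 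The key point that makes this lemma tractable -- in contrast with the growth Lemmas~\ref{lem:contact}~and~\ref{lem:growth} -- is that we only run the process for one unit of time, so we never need $\zeta_t$ to be supercritical: over a bounded time horizon the occupied mass of a contact process with bounded rates cannot shrink by more than a constant factor. It therefore suffices to prove, for each $z \in \{-1, 0, 1\}$, that $|\zeta_1 \cap \hat B_z| \geq a \,|S|$ with probability at least $1 - \exp (- \sqrt L)$, since then $H_{z, 1} \geq |\zeta_1 \cap \hat B_z|$. Throughout I would take $\ep_0 < 1/4$ so that a direct distance estimate (cf.\ Lemma~\ref{lem:neighborhood}), namely $\|x - w \|_\infty \leq 4l \leq 4 \ep_0 L < L$, gives $\hat B_z \subset N_x$ for every $x \in \hat B_0$ and $z \in \{-1, 0, 1\}$, and I would record that the conditioning yields $m := |S| = H_{0, 0} \geq 2l \exp (- T) \geq c \,L$ for some constant $c = c (\ep_0, T) > 0$.

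For the center box $z = 0$, survival in place already suffices. Each of the $m$ vertices of $S$ remains occupied for $\zeta_t$ at time $1$ as soon as it carries no death mark on $[0, 1]$, an event of probability $e^{- (1 + \delta)}$ that is \emph{independent} across the distinct vertices of $S$. Hence $|\zeta_1 \cap \hat B_0|$ stochastically dominates a $\binomial (m, e^{- (1 + \delta)})$ variable, and a Chernoff bound shows that it is at least $a \,m$, for any fixed $a < e^{- (1 + \delta)}$, outside an event of probability $e^{- c' m} \leq e^{- \sqrt L}$ for $L$ large, using $m \geq c L$.

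For the neighbouring boxes $z = \pm 1$ I would instead count first-generation colonisations, with a device that guarantees distinct targets. The clean observation is that a vertex $w \in \hat B_z$ belongs to $\zeta_1$ as soon as some $x_i \in S$ sends a birth arrow $x_i \to w$ at a time $s \in (0, 1)$ while $x_i$ carries no death mark on $[0, s)$ and $w$ carries no death mark on $[s, 1]$: indeed $w$ is then occupied right after $s$ -- whether it was empty (a birth) or already occupied -- and stays occupied until time $1$, so \emph{no control on the state of $w$ is needed}. Writing $Y_w$ for the indicator of this event, we have $|\zeta_1 \cap \hat B_z| \geq \sum_{w \in \hat B_z} Y_w$, and this sum automatically counts distinct vertices. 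Each source in $S$ emits birth arrows into $\hat B_z$ at total rate $\lambda \,|\hat B_z| / N \approx \lambda \ep_0 > 0$ with essentially uniform target in $\hat B_z$; since $m \leq |\hat B_z| = 2l$, a balls-into-bins computation gives $E \sum_w Y_w \geq c_1 m$ for a constant $c_1 > 0$, the crucial input being that the number of emitted arrows is comparable to, and not much larger than, the number of available target vertices, so that a positive fraction of the targets hit are distinct.

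It then remains to upgrade this first-moment bound to a concentration statement at the required scale. The variable $\sum_w Y_w$ is a function of the independent Poisson clocks governing the death marks at the sources and targets and the birth arrows between them; after truncating, on an event of probability $1 - e^{- \sqrt L}$, the number of arrows emitted by each source at $O (\log L)$ (a Poisson variable of mean $O (1)$), a bounded-differences inequality -- each source now influencing the sum by at most $O (\log L)$ -- yields a deviation below $c_1 m / 2$ with probability at most $\exp (- \Omega (m / \log^2 L)) = \exp (- \Omega (L / \log^2 L)) \leq e^{- \sqrt L}$. Taking $a := \min (e^{- (1 + \delta)} / 2, \,c_1 / 2)$ then proves the estimate for $z = -1, 0, 1$. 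The main obstacle is precisely this last combination: controlling collisions among the randomly placed infection targets so that the number of \emph{distinct} colonised vertices in the adjacent box is a constant fraction of $m$, while simultaneously obtaining concentration sharp enough to beat the $e^{- \sqrt L}$ threshold. The reduction to a contact process and the one-unit time horizon are what keep both the rates and the combinatorics under control.
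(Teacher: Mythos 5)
Your overall skeleton is sound and matches the paper's: the domination of the infected set by a contact process driven by the type-0 arrows of Table~\ref{tab:move}, the in-place survival argument for the center box, and the idea of counting \emph{distinct} colonised targets $w \in \hat B_z$ (rather than emitted arrows) for $z = \pm 1$, together with the first-moment bound $E \sum_w Y_w \geq c_1 m$, are all correct. The genuine gap is in your final concentration step. Truncating each source's arrow count at $O (\log L)$ fails with probability that is only quasi-polynomially small: for a Poisson variable with mean $O(1)$ one has $P (X > C \log L) \approx \exp (- C \log L \,\log \log L)$, so even after the union bound over the $m \asymp L$ sources the truncation event fails with probability about $\exp (- c \log L \,\log \log L)$, which is far \emph{larger} than the target error $\exp (- \sqrt L)$. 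Raising the truncation level does not help: making the truncation failure smaller than $\exp (- \sqrt L)$ forces a level $K$ with $K \log K \gtrsim \sqrt L$, hence $K \gtrsim \sqrt L / \log L$, while the bounded-differences exponent $t^2 / \sum_i c_i^2 \approx m / K^2 \approx L / K^2$ then falls to order $\log^2 L$, which is useless; conversely $L / K^2 \geq \sqrt L$ forces $K \leq L^{1/4}$, incompatible with the first requirement. So plain McDiarmid plus truncation cannot reach the $1 - \exp (- \sqrt L)$ threshold at any truncation level.

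The fix is exactly the device the paper uses, and it removes the need for any generic concentration inequality: strengthen the survival requirements so that they decouple from the arrows. Let $G_0$ be the set of initially infected sources with no death \emph{or recovery} mark on all of $[0, 1]$, and $G_z$ the set of vertices of $\hat B_z$ with no death or recovery mark on all of $[0, 1]$; both sets are functions of the marks alone, hence independent of the arrow processes. Any $y \in G_z$ that receives a type-0 arrow from some $x \in G_0$ at a time in $(1/2, 1)$ is infected at time 1, and, conditionally on $\card G_0 = K_0$ and $\card G_z = K_z$, the number of such $y$ is \emph{exactly} $\binomial (K_z, \,1 - \exp (- \lambda K_0 / 4L))$, because the Poisson arrow processes pointing at distinct targets are independent of each other and of the marks. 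Standard binomial large deviations for $\card G_0$, $\card G_z$ and for this conditional count then give error probabilities $\exp (- \Omega (L))$, comfortably below $\exp (- \sqrt L)$. Your weaker requirement --- source alive only up to the arrow time, target alive only afterwards --- buys a slightly larger constant $a$, but it entangles the target counts with the source randomness and destroys the conditional independence across targets that makes the count binomial; giving up that constant is what makes the proof close. (Alternatively, a Bernstein-type martingale inequality exploiting the $O(1)$ variance of each source's contribution would also reach $\exp (- \Omega (L))$, but the conditioning argument is the elementary route.)
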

\begin{proof}
 We begin with the case~$z = 0$, which is easier. Let
 $$ \begin{array}{l}
      G_0 \ := \ \{x \in \hat B_0 : \xi_0 (x) = 2 \ \hbox{and} \ \hbox{there is no death} \vspace*{2pt} \\ \hspace*{100pt}
             \hbox{or recovery marks on the segment} \ \{x \} \times [0, 1] \}. \end{array} $$
 Since each~$x \in G_0$ is occupied by an infected host at time 1, expressing~$\card G_0$ as a binomial random variable and using standard large deviation estimates,
 we obtain
 $$ \begin{array}{l}
      P \,(H_{0, 1} \geq a \,H_{0, 0} \,| \,H_{0, 0} \geq 2l \exp (- T)) \vspace*{4pt} \\ \hspace*{25pt} \geq \
      P \,(\card G_0 \geq a \,H_{0, 0} \,| \,H_{0, 0} \geq 2l \exp (- T)) \vspace*{4pt} \\ \hspace*{25pt} = \
      P \,(\binomial (H_{0, 0}, \,\exp (- (1 + \delta))) \geq a \,H_{0, 0} \,| \,H_{0, 0} \geq 2l \exp (- T)) \vspace*{4pt} \\ \hspace*{25pt} \geq \
      P \,(\binomial (H_{0, 0}, \,\exp (- (1 + \delta))) \geq (1/2) \exp (- (1 + \delta)) \,H_{0, 0} \,| \,H_{0, 0} \geq 2l \exp (- T)) \vspace*{4pt} \\ \hspace*{25pt} \geq \
      1 - \exp (- (l / 4) \,\exp (- T) \exp (- (1 + \delta))) \ \geq \ 1 - \exp (- \sqrt L) \end{array} $$
 for all~$a \leq (1/2) \,\exp (- (1 + \delta))$ and~$L$ sufficiently large.
 To prove the result when~$z = \pm 1$, we again consider the set~$G_0$ defined above as well as
 $$ G_z \ := \ \{y \in \hat B_z : \hbox{there is no death or recovery marks on} \ \{y \} \times [0, 1] \}. $$
 Then, writing~$x \to y$ to indicate that
 $$ \hbox{there is a type-0 arrow~$(x, s) \overset{0}{\longrightarrow} (y, s)$ for some~$s \in (1/2, 1)$}, $$
 we have the inclusion
\begin{equation}
\label{eq:move-1}
  \begin{array}{rrl}
    G_z' & := & \{y \in \Z^d : y \in G_z \ \hbox{and there exists~$x \in G_0$ such that $x \to y$} \} \vspace*{4pt} \\
         & \subset & \{y \in \hat B_z : \xi_1 (y) = 2 \} \ = \ H_{z, 1}. \end{array}
\end{equation}
 In addition, $\card G_0$ and~$\card G_z$ are equal in distribution to
\begin{equation}
\label{eq:move-2}
  \begin{array}{rcl}
    \card G_0 & = & \binomial (H_{0, 0}, \,\exp (- (1 + \delta))) \vspace*{4pt} \\
    \card G_z & = & \binomial (2l, \,\exp (- (1 + \delta))) \end{array}
\end{equation}
 while we have the conditional distribution
\begin{equation}
\label{eq:move-3}
  \begin{array}{l}
    P \,(\card G_z' = K \,| \,\card G_0 = K_0 \ \hbox{and} \ \card G_z = K_z) \vspace*{4pt} \\ \hspace*{100pt} = \
    P \,(\binomial (K_z, \,1 - \exp (- \lambda \,K_0 / 4L)) = K). \end{array}
\end{equation}
 In particular, letting~$a = (1/4l) \,K_z \,(1 - \exp (- \lambda \,K_0 / 4L)) \exp (T)$ where
 $$ K_0 \ := \ l \times \exp (-T) \exp (- (1 + \delta)) \quad \hbox{and} \quad K_z \ := \ l \times \exp (- (1 + \delta)) $$
 and combining~\eqref{eq:move-1}--\eqref{eq:move-3}, we deduce that
 $$ \begin{array}{l}
      P \,(H_{z, 1} \geq a \,H_{0, 0} \,| \,H_{0, 0} \geq 2l \exp (-T)) \vspace*{4pt} \\ \hspace*{20pt} \geq \
      P \,(G_z' \geq a \,H_{0, 0} \,| \,H_{0, 0} \geq 2l \exp (-T)) \vspace*{4pt} \\ \hspace*{20pt} \geq \
      P \,(G_z' \geq a \,H_{0, 0} \,| \,H_{0, 0} \geq 2l \exp (-T) \ \hbox{and} \ \card G_0 \geq K_0 \ \hbox{and} \ \card G_z \geq K_z) \vspace*{4pt} \\ \hspace*{50pt} \times \
      P \,(\card G_0 \geq K_0 \ \hbox{and} \ \card G_z \geq K_z \,| \,H_{0, 0} \geq 2l \exp (-T)) \vspace*{4pt} \\ \hspace*{20pt} \geq \
      P \,(\binomial (K_z, \,1 - \exp (- \lambda \,K_0 / 4L) \geq (1/2) \,K_z \,(1 - \exp (- \lambda \,K_0 / 4L))) \vspace*{4pt} \\ \hspace*{50pt} \times \
      P \,(\binomial (H_{0, 0}, \,\exp (- (1 + \delta))) \geq K_0 \,| \,H_{0, 0} \geq 2l \exp (-T)) \vspace*{4pt} \\ \hspace*{50pt} \times \
      P \,(\binomial (2l, \,\exp (- (1 + \delta))) \geq K_z) \vspace*{4pt} \\ \hspace*{20pt} \geq \
      1 - \exp (- (K_z / 8)(1 - \exp (- \lambda \,K_0 / 4L))) - \exp (- K_0 / 4) - \exp (- K_z / 4) \vspace*{4pt} \\ \hspace*{20pt} \geq \
      1 - \exp (- \sqrt L) \end{array} $$
 for all~$L$ sufficiently large.
\end{proof} \\ \\
 Applying~$O(\sqrt{T})$ times Lemma~\ref{lem:move}, we deduce the following result.
\begin{lemma} --
\label{lem:target}
 Let~$T_0 = \ep_0^{-1} \sqrt{T}$ and~$x \in \Z$ with~$|x| \leq T_0$. Then,
 $$ P \,(H_{x, T_0} \geq H_{0, 0} \,\exp (- T/4) \,| \,H_{0, 0} \geq 2l \exp (- T / 2)) \ \geq \ 1 - \exp (- L^{1/4}) $$
 for all~$T$ large and all $L$ larger than some finite~$L_1 \geq L_0$.
\end{lemma}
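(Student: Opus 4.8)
The plan is to iterate Lemma~\ref{lem:move} along a nearest-neighbor path of boxes joining~$\hat B_0$ to~$\hat B_x$, chaining the one-step estimates through the Markov property of the process at integer times. Since~$|x| \leq T_0$, I can fix a sequence of integers~$z_0 = 0, z_1, \ldots, z_{T_0} = x$ with~$|z_{k + 1} - z_k| \leq 1$ for every~$k$ (take steps toward~$x$ first, then remain at~$x$ by setting~$z_{k + 1} = z_k$). Over one unit of time, Lemma~\ref{lem:move} transfers a fixed fraction~$a$ of the infected hosts from~$\hat B_{z_k}$ to the adjacent or identical box~$\hat B_{z_{k + 1}}$, so that after~$T_0$ steps a fraction~$a^{T_0}$ survives. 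Because~$a^{T_0} = \exp (- \ep_0^{-1} \sqrt{T} \ln (1/a))$ and~$T_0 = \ep_0^{-1} \sqrt{T}$, this surviving fraction will be shown to exceed the target~$\exp (- T/4)$ once~$T$ is large.

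First I would record the feature of Lemma~\ref{lem:move} that makes the chaining legitimate: its conclusion depends on the initial configuration only through the \emph{number}~$H_{0, 0}$ of infected hosts in the box and not through their positions, since in its proof the controlling quantities~$\card G_0$, $\card G_z$ and~$\card G_z'$ are binomial random variables depending only on~$H_{0, 0}$. By translation invariance of the graphical representation the same bound holds for any box. Conditioning on the configuration at integer time~$k$ and using that the Poisson marks in disjoint unit intervals are independent, I therefore obtain, for each~$k$,
\[
  P \,(H_{z_{k + 1}, k + 1} \geq a \,H_{z_k, k} \,| \,H_{z_k, k} \geq 2l \exp (-T)) \ > \ 1 - \exp (- \sqrt{L}).
\]

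Next I would check that the threshold~$H_{z_k, k} \geq 2l \exp (-T)$ required to apply Lemma~\ref{lem:move} is preserved all along the path. On the event where the first~$k$ steps succeed, $H_{z_k, k} \geq a^k \,H_{0, 0} \geq a^{T_0} \cdot 2l \exp (- T/2)$; since~$a^{T_0} = \exp (- \ep_0^{-1} \sqrt{T} \ln (1/a))$ and~$\ep_0^{-1} \ln (1/a) \leq \sqrt{T}/4$ for~$T$ large, this is at least~$2l \exp (-T)$, so the hypothesis of Lemma~\ref{lem:move} holds at every step. The same estimate gives~$H_{x, T_0} \geq a^{T_0} \,H_{0, 0} \geq H_{0, 0} \exp (- T/4)$, which is the desired lower bound. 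A union bound over the~$T_0$ steps then controls the failure probability by~$T_0 \exp (- \sqrt{L}) = \ep_0^{-1} \sqrt{T} \exp (- \sqrt{L})$; with~$T$ (hence~$T_0$) fixed and~$L \to \infty$, this is at most~$\exp (- L^{1/4})$ for all~$L$ beyond some finite~$L_1 \geq L_0$, giving the claim.

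The main obstacle is the bookkeeping of the two competing exponential scales rather than any new probabilistic idea: the per-step survival factor~$a$ produces a cumulative loss~$a^{T_0}$ that is an~$\exp (- c \sqrt{T})$ quantity, and this must simultaneously stay above the target density~$\exp (- T/4)$ and above the working threshold~$\exp (-T)$ needed to keep reapplying Lemma~\ref{lem:move}. This is exactly what forces the choice~$T_0 = \ep_0^{-1} \sqrt{T}$ and the hypothesis~$H_{0, 0} \geq 2l \exp (- T/2)$ (the extra room between~$\exp(-T/2)$ and~$\exp(-T)$), and it is why the conclusion is stated only for~$T$ large. The secondary point needing care is the configuration-independence of Lemma~\ref{lem:move}'s bound noted above, which is what allows the Markov chaining and the clean union bound.
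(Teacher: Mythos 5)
Your proof is correct and follows essentially the same route as the paper's: both iterate Lemma~\ref{lem:move} along a nearest-neighbor path of roughly $T_0$ boxes, choose $T$ large so that the cumulative loss factor (your $a^{T_0}$, the paper's $a^n$ with $n = \lfloor T_0 \rfloor$) stays above $\exp(-T/4)$, verify the working threshold $2l\exp(-T)$ is maintained at each step, and absorb the accumulated error $T_0 \exp(-\sqrt{L})$ into $\exp(-L^{1/4})$ for large $L$. Your explicit justification of the chaining (that Lemma~\ref{lem:move}'s bound depends on the configuration only through the count $H$, enabling the Markov-property iteration) is a point the paper leaves implicit in its product of conditional probabilities.
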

\begin{proof}
 Let~$n$ be the integer part of~$T_0$.
 Then, there exist
 $$ x_0 = 0, x_1, x_2, \ldots, x_n = x \quad \hbox{such that} \quad |x_{i + 1} - x_i| \leq 1 \quad \hbox{for} \quad i = 0, 1, \ldots, n - 1. $$
 Let~$T$ be sufficiently large so that~$a^n \geq \exp (- T / 4)$.
 Then, by applying repeatedly Lemma~\ref{lem:move} along the corresponding path of adjacent small boxes, we obtain
 $$ \begin{array}{l}
      P \,(H_{x, T_0} \geq H_{0, 0} \,\exp (- T/4) \,| \,H_{0, 0} \geq 2l \exp (- T / 2)) \vspace*{4pt} \\ \hspace*{25pt} \geq \
      P \,(H_{x, T_0} \geq a^n \,H_{0, 0} \,| \,H_{0, 0} \geq 2l \exp (- T / 2)) \vspace*{4pt} \\ \hspace*{25pt} \geq \
     \prod_{i = 0, 1, \ldots, n - 1} \ P \,(H_{x_{i + 1}, i + 1} \geq a \,H_{x_i, i} \,| \,H_{x_i, i} \geq 2l \,a^i \exp (- T / 2)) \vspace*{4pt} \\ \hspace*{25pt} \geq \
     (1 - \exp (- \sqrt L))^n \ \geq \ 1 - n \,\exp (- \sqrt L) \ \geq \ 1 - \exp (- L^{1/4}) \end{array} $$
 for all~$L$ sufficiently large.
\end{proof}
\begin{lemma} --
\label{lem:block}
 There exists~$T$ large such that
 $$ \begin{array}{l}
     \lim_{L \to \infty} \,P \,((1, 1) \ \hbox{is infected} \,| \,(0, 0) \ \hbox{is infected}) \ = \ 1. \end{array} $$
\end{lemma}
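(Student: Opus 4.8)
The plan is to chain together Lemmas~\ref{lem:contact}--\ref{lem:target}, working with the modified process~$\bar\xi_t$ (which is dominated by~$\xi_t$ thanks to Lemma~\ref{lem:attractive}, so that verifying the event for~$\bar\xi_t$ suffices) and tracking the box densities~$\hat u_2(x,\bar\xi_t)$ and~$H_{x,t}$. First I would unpack the hypothesis: by~\eqref{eq:infected}, saying that~$(0,0)$ is infected means that at some time~$t_0 \in [0,T_1]$ the central box~$\hat B_0$ satisfies $\hat u_{1+2}(0,\bar\xi_{t_0}) \geq u_*/2$ and $\hat u_2(0,\bar\xi_{t_0}) \geq \exp(-T)$. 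Restarting the clock at~$t_0$, these are exactly the hypotheses of Lemma~\ref{lem:growth}; moreover the host density $\hat u_{1+2}(0,\bar\xi_{t_0})\geq u_*/2$, being the occupied-site density of the underlying contact process, is precisely what Lemma~\ref{lem:contact} needs to establish a habitat.

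The backbone is a growth-and-recenter cycle that maintains a high infected density at~$\hat B_0$. One application of Lemma~\ref{lem:growth} yields, with probability tending to one as~$L\to\infty$, one of two outcomes: either (a) $\hat u_2(0,\bar\xi_{(1+c)T}) \geq \exp(-T/2)$ at the deterministic time~$(1+c)T$, or (b) some box~$\hat B_x \subset I_T$ reaches $\hat u_2(x,\bar\xi_s) \geq T^{-1}$ at a random time~$s\in[T,(1+c)T]$. In case~(b), the box-index of~$\hat B_x$ differs from~$0$ by at most~$T_0 = \ep_0^{-1}\sqrt T$, so Lemma~\ref{lem:target} transports this infection back to~$\hat B_0$: since $T^{-1}\geq\exp(-T/2)$ and the transport costs only a factor~$\exp(-T/4)$, I obtain $\hat u_2(0,\bar\xi_{s+T_0}) \geq T^{-1}\exp(-T/4) \geq \exp(-T/2)$ for~$T$ large. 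Thus in both cases the central box attains density~$\geq\exp(-T/2)$ at some time no later than $(1+c)T + T_0 \leq T_1$. Because $\exp(-T/2) > \exp(-T)$, the hypothesis of Lemma~\ref{lem:growth} is met afresh, so the cycle iterates, each cycle advancing time by an amount between~$(1+c)T$ and~$T_1$ and succeeding with probability~$\to 1$.

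It then remains to run this cycle a bounded number of times (at most three for~$T$ large) so that the central box is infected, with density~$\geq\exp(-T/2)$, at a time~$\tau$ with $\tau + T_0 \in [2T_1,3T_1]$; since each cycle increment never exceeds the width~$T_1$ of the admissible window for~$\tau$, such a~$\tau$ is hit by a pigeonhole argument starting from~$t_0 \leq T_1$. A final application of Lemma~\ref{lem:target}, moving from~$\hat B_0$ to the target box centered at~$\sqrt T L$ (whose box-index~$\sqrt T/(2\ep_0)\leq T_0$ is reachable and whose box lies in~$\Delta_T$), then gives density~$\geq \exp(-T/2)\exp(-T/4) = \exp(-3T/4) \geq \exp(-T)$ at a time in~$[2T_1,3T_1]$. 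This is the infected-density requirement of~\eqref{eq:infected} for~$(1,1)$, while the host-density requirement is supplied throughout by Lemma~\ref{lem:contact}, which keeps every box in~$\Delta_T$ within~$\rho$ of~$u_*$ over all of~$[T,4(1+c)T]$. As only finitely many steps are used and every failure probability (Lemmas~\ref{lem:contact}, \ref{lem:growth}, \ref{lem:target}) vanishes as~$L\to\infty$, a union bound yields the claimed limit.

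I expect the main obstacle to be the timing bookkeeping: one must check that the host-growth phase, each infection-growth phase of length~$(1+c)T$, and each transport of duration~$T_0$ all fit inside~$[0,3T_1]$, that the final arrival lands in~$[2T_1,3T_1]$ rather than overshooting it, and that the factor~$\exp(-T/4)$ lost at each transport never drops the density below the thresholds~$\exp(-T/2)$ and~$\exp(-T)$ used to restart the cycle and certify the target. The choice $T_1 = (1+c)T + 2\ep_0^{-1}\sqrt T$ in~\eqref{UBD} and the separation of the scales $\exp(-T) < \exp(-T/2) < T^{-1}$ are exactly what make these inequalities hold for all~$T$ large, while the confinement of the construction to~$I_T$ through~$\bar\xi_t$ is what keeps the spatial estimates and Lemma~\ref{lem:contact} applicable.
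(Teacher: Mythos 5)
Your proposal is correct and follows essentially the same route as the paper: the paper's proof likewise unpacks~\eqref{eq:infected}, uses Lemma~\ref{lem:contact} with~$\rho = u_*/2$ to guarantee the host-density condition on all of~$[2T_1,3T_1]$, runs the growth-and-recenter cycle of Lemmas~\ref{lem:growth} and~\ref{lem:target} through a sequence of stopping times~$s_i$ with increments in~$[T, T_1 - T_0]$, lands in~$[2T_1, 3T_1 - T_0]$ by the same pigeonhole argument, and finishes with one more application of Lemma~\ref{lem:target} toward the box centered at~$\sqrt{T} L$. The only slip is quantitative: in case~(b) of Lemma~\ref{lem:growth} the random time can be as early as~$T$, so a cycle can advance time by as little as~$T + T_0$ rather than~$(1+c)T$, and the number of cycles needed is therefore of order~$2(1+c)$ (the paper takes~$n = \lfloor 2(1+c)+1 \rfloor + 1$, which can be large when~$b$ is small) rather than three --- but since this count is still bounded independently of~$T$ and~$L$, your union bound and pigeonhole argument go through unchanged.
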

\begin{proof}
 To simplify the notation, we introduce
 $$ \begin{array}{rcl}
      \hat N_{1 + 2} (z, t) & := & \card \{x \in \hat B_0 + z \sqrt{T} L : \xi_t (x) \neq 0 \} \vspace*{4pt} \\
      \hat N_2 (z, t)       & := & \card \{x \in \hat B_0 + z \sqrt{T} L : \xi_t (x) = 2 \} \end{array} $$
 and observe that~$(z, n)$ is infected when
  $$ \hat N_{1 + 2} (z, t) \geq l \,u_* \quad \hbox{and} \quad \hat N_2 (z, t) \geq 2 l \,\exp (-T) \quad \hbox{for some} \quad t \in [2n T_1, (2n + 1) \,T_1]. $$
 First, we apply Lemma~\ref{lem:contact} with~$\rho = (1/2) \,u_*$ to get
\begin{equation}
\label{eq:block-1}
  \begin{array}{l}
    \lim_{L \to \infty} \,P \,(\hat N_{1 + 2} (1, t) \geq l \,u_* \ \hbox{for all} \ t \in [2T_1, 3T_1] \,| \,(0, 0) \ \hbox{is infected}) \vspace*{4pt} \\ \hspace*{25pt} \geq \
    \lim_{L \to \infty} \,P \,(|\hat u_{1 + 2} (x, \xi_t) - u_*| < (1/2) \,u_* \vspace*{4pt} \\ \hspace*{50pt}
    \hbox{for all} \ t \in [T, 4(1+ c) \,T] \ \hbox{and} \ x \in \Delta_T \,| \,\hat u_{1 + 2} (0, \xi_0) \geq (1/2) \,u_*) \ = \ 1 \end{array}
\end{equation}
 for all~$T$ large.
 This proves that the first condition for~$(1, 1)$ to be infected holds with probability close to one when the parameters~$T$ and~$L$ are large.
 To deal with the second condition, we let
 $$ \begin{array}{rcl}
      B_1 & := & \{\hat u_2 (0, \xi_{(1 + c) T}) \geq \exp (-T/2) \} \vspace*{4pt} \\
      B_2 & := & \{\hat u_2 (x, \xi_t) \geq T^{-1} \ \hbox{for some} \ t \in [T, (1 + c) \,T] \ \hbox{and} \ \hat B_x \subset I_T \} \end{array} $$
 be the two events introduced in Lemma~\ref{lem:growth}.
 According to this lemma,
 \begin{equation}
\label{eq:block-2}
  \begin{array}{l}
    \lim_{L \to \infty} \,P \,(B_1 \cup B_2 \,| \,(0, 0) \ \hbox{is infected}) \ = \ 1 \end{array}
\end{equation}
 for all~$T$ large.
 In case~$B_1$ occurs, we do nothing, whereas in case~$B_2 \setminus B_1$ occurs, we use Lemma~\ref{lem:target} to re-center the infected hosts towards zero.
 More precisely, since on the event~$B_2$ there is a small box with a fraction of infected hosts exceeding~$T^{-1}$ which is distance at most~$2 \sqrt{T} L$ from
 the origin, this lemma implies that
 $$ \begin{array}{l}
    \lim_{L \to \infty} \,P \,(\hat N_2 (0, t + T_0) \geq 2 l \,\exp (-T/2) \,| \,B_2) \vspace*{4pt} \\ \hspace*{50pt} = \
    \lim_{L \to \infty} \,P \,(H_{0, t + T_0} \ \geq \ 2 l \,\exp (-T/2) \,| \,B_2) \vspace*{4pt} \\ \hspace*{50pt} \geq \
    \lim_{L \to \infty} \,P \,(H_{0, t + T_0} \ \geq \ 2 l \,T^{-1} \exp (-T / 4) \,| \,B_2) \ = \ 1 \end{array} $$
 for all~$T$ large.
 Recalling~\eqref{eq:block-2} and the definition of~$B_1$, it follows that
\begin{equation}
\label{eq:block-3}
 \begin{array}{l}
    \lim_{L \to \infty} \,P \,(\hat N_2 (0, s) \geq 2 l \,\exp (-T/2) \ \hbox{for some} \ s \in [T, (1 + c) \,T + T_0] \,| \,(0, 0) \ \hbox{is infected}) \vspace*{4pt} \\ \hspace*{20pt} \geq \
    \lim_{L \to \infty} \,P \,(\hat N_2 (0, s) \geq 2 l \,\exp (-T/2) \ \hbox{for some} \ s \in [T, (1 + c) \,T + T_0] \,| \,B_1 \cup B_2) \vspace*{4pt} \\ \hspace*{80pt}
      P \,(B_1 \cup B_2 \,| \,(0, 0) \ \hbox{is infected}) \ = \ 1 \end{array}
\end{equation}
 Combining~\eqref{eq:block-1} and~\eqref{eq:block-3}, we deduce
\begin{equation}
\label{eq:block-4}
 \begin{array}{l}
    \lim_{L \to \infty} \,P \,(\hat N_{1 + 2} (0, s) \geq l \,u_* \ \hbox{and} \ \hat N_2 (0, s) \geq 2 l \,\exp (-T/2) \vspace*{4pt} \\ \hspace*{70pt}
    \hbox{for some} \ s \in [T, (1 + c) \,T + T_0] \,| \,(0, 0) \ \hbox{is infected})  \ = \ 1 \end{array}
\end{equation}
 Now, define~$s_0 := 0$ and recursively for all~$i > 0$,
 $$ \begin{array}{l}
      s_i \ := \ \inf \,\{s : \hat N_{1 + 2} (0, s) \geq l \,u_* \ \hbox{and} \ \hat N_2 (0, s) \geq 2 l \,\exp (-T/2) \vspace*{4pt} \\ \hspace*{100pt}
    \hbox{for some} \ s \in [s_{i - 1} + T, s_{i - 1} + (1 + c) \,T + T_0] \} \end{array} $$
 where we use the convention~$\inf \varnothing = \infty$.
 Now, let~$n := \lfloor 2 (1 + c) + 1 \rfloor + 1$ and note that
 $$ 2T_1 / T \ = \ (2 (1 + c) \,T + 4 T_0) / T \ \leq \ 2 (1 + c) + 1 \ \leq \ n \quad \hbox{for all $T$ large}. $$
 Since~$n$ does not depend on~$T$ and~$L$, the strong Markov property and~\eqref{eq:block-4} imply that
\begin{equation}
\label{eq:block-5}
 \begin{array}{l}
   \lim_{L \to \infty} \,P \,(s_n < \infty \,| \,(0, 0) \ \hbox{is infected}) \vspace*{4pt} \\ \hspace*{40pt} = \
   \prod_{i = 0, 1, \ldots, n - 1} \ P \,(s_{i + 1} < \infty \,| \,s_i < \infty \ \hbox{and} \ (0, 0) \ \hbox{is infected}) \ = \ 1. \end{array}
\end{equation}
 In addition, on the event that~$s_n < \infty$, we have
 $$ s_{i + 1} - s_i \in [T, (1 + c) \,T + T_0] = [T, T_1 - T_0] \quad \hbox{for} \quad i = 0, 1, 2, \ldots, n - 1 $$
 therefore there exists~$i \leq n$ such that~$s_i \in [2 T_1, 3 T_1 - T_0]$.
 This together with \eqref{eq:block-5} implies that
\begin{equation}
\label{eq:block-6}
 \begin{array}{l}
    \lim_{L \to \infty} \,P \,(\hat N_{1 + 2} (0, s) \geq l \,u_* \ \hbox{and} \ \hat N_2 (0, s) \geq 2 l \,\exp (-T/2) \vspace*{4pt} \\ \hspace*{80pt}
    \hbox{for some} \ s \in [2T_1, 3 T_1 - T_0] \,| \,(0, 0) \ \hbox{is infected}) \vspace*{4pt} \\ \hspace*{40pt} \geq \
    \lim_{L \to \infty} \,P \,(s_n < \infty \,| \,(0, 0) \ \hbox{is infected}) \ = \ 1. \end{array}
\end{equation}
 Applying again Lemma~\ref{lem:target}, we obtain
\begin{equation}
\label{eq:block-7}
 \begin{array}{l}
    \lim_{L \to \infty} \,P \,(\hat N_{1 + 2} (1, s + T_0) \geq l \,u_* \ \hbox{and} \ \hat N_2 (1, s + T_0) \geq 2 l \,\exp (-T) \vspace*{4pt} \\ \hspace*{80pt}
    \,| \,\hat N_{1 + 2} (0, s) \geq l \,u_* \ \hbox{and} \ \hat N_2 (0, s) \geq 2 l \,\exp (-T/2) \vspace*{4pt} \\ \hspace*{80pt}
    \hbox{for some} \ s \in [2T_1, 3 T_1 - T_0]) \ = \ 1. \end{array}
\end{equation}
 The lemma follows by observing that the conditional probability in the statement is larger than the product of~\eqref{eq:block-6} and~\eqref{eq:block-7}. 
\end{proof} \\ \\
 Since the graphical representation is translation invariant in space and time, Lemma~\ref{lem:block} implies that the process can be coupled
 with supercritical oriented site percolation in which a way that the set of infected sites dominates the set of wet sites.
 Theorem~\ref{th:long-range} can then be deduced using the same argument as in the previous section. \\


\noindent\textbf{Acknowledgment}.
 The authors would like to thank Rick Durrett for suggesting the problem and for his comments on a preliminary version of this work.


\end{document}